\documentclass[11pt]{article}
\usepackage{amsfonts,mathrsfs,amssymb,amsthm,mathptm}
\usepackage{amsmath,amscd}
\usepackage{mathptm,pslatex}
\usepackage{eucal}
\usepackage{color}
\usepackage[all]{xy}
\usepackage{graphicx}
\usepackage{fancyhdr}
\usepackage{url}
\usepackage{hyperref}
\hypersetup{
 colorlinks=true,
 linkcolor=black,
 citecolor=black,
 urlcolor=black
}

\begin{document}
\newtheorem{Def}{Definition}[section]
\newtheorem{Bsp}[Def]{Example}
\newtheorem{Prop}[Def]{Proposition}
\newtheorem{Theo}[Def]{Theorem}
\newtheorem{Lem}[Def]{Lemma}
\newtheorem{Koro}[Def]{Corollary}
\newtheorem{Nota}[Def]{Notation}
\theoremstyle{definition}
\newtheorem{Rem}[Def]{Remark}
\newtheorem{Prob}[Def]{Problem}
\newtheorem{Ques}[Def]{Question}

\newcommand{\add}{{\rm add}}
\newcommand{\con}{{\rm con}}
\newcommand{\gd}{{\rm gldim}}
\newcommand{\repdim}{{\rm repdim}}
\newcommand{\sd}{{\rm stdim}}
\newcommand{\sr}{{\rm sr}}
\newcommand{\dm}{{\rm domdim}}
\newcommand{\cdm}{{\rm codomdim}}
\newcommand{\tdim}{{\rm dim}}
\newcommand{\E}{{\rm E}}
\newcommand{\K}{{\rm k}}
\newcommand{\Mor}{{\rm Morph}}
\newcommand{\End}{{\rm End}}
\newcommand{\ind}{{\rm ind}}
\newcommand{\rsd}{{\rm res.dim}}
\newcommand{\rd} {{\rm rd}}
\newcommand{\ol}{\overline}
\newcommand{\overpr}{$\hfill\square$}
\newcommand{\rad}{{\rm rad}}
\newcommand{\soc}{{\rm soc}}
\renewcommand{\top}{{\rm top}}
\newcommand{\pd}{{\rm pdim}}
\newcommand{\id}{{\rm idim}}
\newcommand{\fld}{{\rm fdim}}
\newcommand{\Fac}{{\rm Fac}}
\newcommand{\Gen}{{\rm Gen}}
\newcommand{\fd} {{\rm fin.dim}}
\newcommand{\Fd} {{\rm Fin.dim}}
\newcommand{\Pf}[1]{{\mathcal P}^{<\infty}(#1)}
\newcommand{\DTr}{{\rm DTr}}
\newcommand{\cpx}[1]{#1^{\bullet}}
\newcommand{\D}[1]{{\mathcal D}(#1)}
\newcommand{\Dz}[1]{{\mathcal D}^+(#1)}
\newcommand{\Df}[1]{{\mathcal D}^-(#1)}
\newcommand{\Db}[1]{{\mathscr D}^b(#1)}
\newcommand{\C}[1]{{\mathcal C}(#1)}
\newcommand{\Cz}[1]{{\mathcal C}^+(#1)}
\newcommand{\Cf}[1]{{\mathcal C}^-(#1)}
\newcommand{\Cb}[1]{{\mathcal C}^b(#1)}
\newcommand{\Dc}[1]{{\mathcal D}^c(#1)}
\newcommand{\Ds}[1]{{\mathscr D}_{sg}(#1)}
\newcommand{\Kz}[1]{{\mathcal K}^+(#1)}
\newcommand{\Kf}[1]{{\mathcal  K}^-(#1)}
\newcommand{\Kb}[1]{{\mathscr K}^b(#1)}
\newcommand{\DF}[1]{{\mathcal D}_F(#1)}
\newcommand{\Gp}[1]{{\mathscr G}(#1)}
\newcommand{\rpd}{{\rm repdim}}

\newcommand{\Kac}[1]{{\mathcal K}_{\rm ac}(#1)}
\newcommand{\Keac}[1]{{\mathcal K}_{\mbox{\rm e-ac}}(#1)}

\newcommand{\modcat}{\ensuremath{\mbox{{\rm -mod}}}}
\newcommand{\Modcat}{\ensuremath{\mbox{{\rm -Mod}}}}
\newcommand{\Spec}{{\rm Spec}}

\newcommand{\stmc}[1]{#1\mbox{{\rm -{\underline{mod}}}}}
\newcommand{\Stmc}[1]{#1\mbox{{\rm -{\underline{Mod}}}}}
\newcommand{\prj}[1]{#1\mbox{{\rm -proj}}}
\newcommand{\inj}[1]{#1\mbox{{\rm -inj}}}
\newcommand{\Prj}[1]{#1\mbox{{\rm -Proj}}}
\newcommand{\Inj}[1]{#1\mbox{{\rm -Inj}}}
\newcommand{\PI}[1]{#1\mbox{{\rm -Prinj}}}
\newcommand{\GP}[1]{#1\mbox{{\rm -GProj}}}
\newcommand{\GI}[1]{#1\mbox{{\rm -GInj}}}
\newcommand{\gp}[1]{#1\mbox{{\rm -Gproj}}}
\newcommand{\gi}[1]{#1\mbox{{\rm -Ginj}}}
\newcommand{\cre}{{\rm CR}}
\newcommand{\Stgp}[1]{#1\mbox{{\rm -{\underline{Gproj}}}}}

\newcommand{\opp}{^{\rm op}}
\newcommand{\otimesL}{\otimes^{\rm\mathbb L}}
\newcommand{\rHom}{{\rm\mathbb R}{\rm Hom}\,}
\newcommand{\pdim}{\pd}
\newcommand{\Hom}{{\rm Hom}}
\newcommand{\Coker}{{\rm Coker}}
\newcommand{ \Ker  }{{\rm Ker}}
\newcommand{ \Cone }{{\rm Con}}
\newcommand{ \Img  }{{\rm Im}}
\newcommand{\Ext}{{\rm Ext}}
\newcommand{\StHom}{{\rm \underline{Hom}}}
\newcommand{\StEnd}{{\rm \underline{End}}}

\newcommand{\KK}{I\!\!K}

\newcommand{\gm}{{\rm _{\Gamma_M}}}
\newcommand{\gmr}{{\rm _{\Gamma_M^R}}}

\def\vez{\varepsilon}\def\bz{\bigoplus}  \def\sz {\oplus}
\def\epa{\xrightarrow} \def\inja{\hookrightarrow}

\newcommand{\lra}{\longrightarrow}
\newcommand{\llra}{\longleftarrow}
\newcommand{\lraf}[1]{\stackrel{#1}{\lra}}
\newcommand{\llaf}[1]{\stackrel{#1}{\llra}}
\newcommand{\ra}{\rightarrow}
\newcommand{\Ra}{\Rightarrow}
\newcommand{\La}{\Leftarrow}
\newcommand{\xr}{\xrightarrow}
\newcommand{\dk}{{\rm dim_{_{k}}}}

\newcommand{\holim}{{\rm Holim}}
\newcommand{\hocolim}{{\rm Hocolim}}
\newcommand{\colim}{{\rm colim\, }}
\newcommand{\limt}{{\rm lim\, }}
\newcommand{\Add}{{\rm Add }}
\newcommand{\Prod}{{\rm Prod }}
\newcommand{\Tor}{{\rm Tor}}
\newcommand{\Cogen}{{\rm Cogen}}
\newcommand{\Tria}{{\rm Tria}}
\newcommand{\Loc}{{\rm Loc}}
\newcommand{\Coloc}{{\rm Coloc}}
\newcommand{\tria}{{\rm tria}}
\newcommand{\Con}{{\rm Con}}
\newcommand{\Thick}{{\rm Thick}}
\newcommand{\thick}{{\rm thick}}
\newcommand{\Sum}{{\rm Sum}}
\newcommand{\N}{\mathbb{N}}
\newcommand{\B}{\mathcal{B}}
\newcommand{\A}{\mathcal{A}}
\newcommand{\copres}{\ensuremath{\mbox{{\rm copres}}}}

\centerline{\textbf{Singular equivalences and homological conjectures}}

\medskip
\centerline{Zhenxian Chen and Changchang Xi$^{*}$}

\begin{center}School of Mathematical Sciences, Capital Normal University \\ 100048  Beijing, P. R. China\end{center}

\begin{center}\small{Email: czx18366459216@163.com (Z.X.Chen), xicc@cnu.edu.cn (C.C.Xi)}\end{center}
\renewcommand{\thefootnote}{\alph{footnote}}
\setcounter{footnote}{-1} \footnote{ $^*$ Corresponding author.
Email: xicc@cnu.edu.cn; Fax: 0086 10 68903637.}
\renewcommand{\thefootnote}{\alph{footnote}}
\setcounter{footnote}{-1}
\footnote{2020 Mathematics Subject
Classification: Primary 16G10, 16E35, 18G80, 15A30; Secondary  16E65, 16D90, 16S50, 05A05.}
\renewcommand{\thefootnote}{\alph{footnote}}
\setcounter{footnote}{-1}
\footnote{Keywords: Auslander--Reiten conjecture; Cartan determinant conjecture; Centralizer algebra; Elementary divisor; Gorenstein projective module; Permutation matrix; Singular equivalence; Singularity category.}
\begin{abstract}
The fact that each finite-dimensional algebra over a field is isomorphic to the centralizer of two matrices, has suggested to investigate representation theoretical problems of finite-dimensional algebras through the centralizer algebras of matrices. Therefore the first natural question is to study the problems for the centralizer algebra of one matrix, called a centralizer matrix algebra. In this paper we give an elementary and explicit approach to the singularity categories and singular equivalences of centralizer matrix algebras, and verify the Auslander--Reiten and Cartan determinant conjectures for centralizer matrix algebras. Consequently, all historical homological conjectures (the finitistic dimension, Wakamatsu tilting, tilting (projective) complement, strong Nakayama, generalized Nakayama and Nakayama conjectures) are true for centralizer matrix algebras over fields. Moreover, we prove some homological invariants of singular equivalences of centralizer matrix algebras.
 \end{abstract}

{\footnotesize\tableofcontents\label{contents}}

\section{Introduction}
The familiar fact that every finite-dimensional algebra over a field is isomorphic to the centralizer of two matrices in a full matrix algebra (see \cite{brenner1972}), has suggested to contemplate problems on representations of finite-dimensional algebras through the ones of centralizer algebras of matrices, instead of working with quivers and relations.
The first fundamental step toward this direction is to investigate the centralizer of one matrix. Such an algebra will be called a centralizer matrix algebra. In this case, a number of useful partial results have been obtained. The earliest result on this subject could trace back to Frobenius, who established an explicit dimension formula for a centralizer matrix algebra in terms of degrees of invariant factors of the given matrix \cite{frob1877} (see \cite[Theorems 1 and 2, p.105-106]{wed}). Recently, the structure of centralizer matrix algebras was discussed in \cite{xz-LAA}, it was shown that the centralizer matrix algebras of Jordan block matrices are cellular in the sense of Graham and Lehrer \cite{gl1996}. By a quite
different approach from \cite{xz-LAA, xz2}, complete characterizations of derived and stable equivalences were given in terms of  new types of matrix equivalence relations \cite{lx1, lx2}. Moreover, some major conjectures in representation theory, namely the finitistic dimension conjecture, Nakayama conjecture and Alperin--Auslander conjecture, are verified for centralizer matrix algebras \cite{lx1, xz3}.

In the present paper we have two aims in mind, the first one is to attack the problem of how to describe equivalences of singularity categories of centralizer matrix algebras over fields. Roughly speaking, the singularity category of an Artin algebra is the quotient of its
bounded by its perfect derived category. This algebraical construction goes back to Buchweitz (1987) and, independently, the geometrical formulation to Orlov (2004). The second aim, as a consequence of our discussions, is to show that the Cartan determinant and Auslander--Reiten conjectures hold for centralizer matrix algebras. Thus, together with results in \cite{lx1,lx2, xz2}, we know that all homological conjectures mentioned in \cite[Conjectures, p. 409]{ars} hold true for centralizer matrix algebras.

To state our results more precisely, we introduce now a few notations. Let $R$ be a field and $M_n(R)$ the full $n\times n$ matrix algebra over $R$. For a matrix $c\in M_n(R)$, we denote by $S_n(c,R)$ the \emph{centralizer matrix algebra}
$$ S_n(c,R):=\{x\in M_n(R)\mid cx=xc\}.$$

To describe singular equivalences of centralizer matrix algebras, we introduce a new type of equivalence relation on all square matrices over a field. This relation is called an $Sg$-equivalence,
defined in terms of irreducible factors of maximal elementary divisors of matrices together
with combinatorial data of multiplicities of elementary divisors. For more details, we refer to Section \ref{sect3}.

For an Artin algebra $A$, we denote by $\Ds{A}$ the singularity category of $A$, which is the Verdier quotient of the derived category $\Db{A}$ of $A$ by its perfect subcategory $\Kb{\prj{A}}$ (see Subsection \ref{sect2.1} for definition).

With these conventions established we now state our first main result.

\begin{Theo}{\rm [Theorem \ref{seocma}]}\label{thm1}
Let $R$ be a field and let $A:=S_n(c,R)$ and $B:=S_m(d,R)$ for $c\in M_n(R)$ and $d\in M_m(R)$. Then
the following are equivalent.

$\quad (i)$ $\Ds{A}$ and $\Ds{B}$ are equivalent as triangulated $R$-categories.

$\quad (ii)$ $\Ds{A}$ and $\Ds{B}$ are equivalent as $R$-categories.

$\quad (iii)$ $c$ and $d$ are $Sg$-equivalent as matrices.

\end{Theo}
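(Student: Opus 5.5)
The strategy is to reduce the centralizer algebras, up to Morita equivalence, to products of explicit local-type algebras. Then I would compute the singularity category of each block and compare the resulting invariants.

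\emph{Step 1: reduction.} Decompose $R^n$ as an $R[x]$-module via $c$. Its primary components for distinct monic irreducible polynomials $f$ are $\Hom$-orthogonal, so $S_n(c,R)\cong\prod_f \End_{R[x]}(\bigoplus_i (R[x]/(f^{i}))^{m_i})$. Each factor is Morita equivalent to the basic algebra $\End_{R[x]}(\bigoplus_{i\in I_f} R[x]/(f^{i}))$, where $I_f$ is the set of exponents occurring. Completing at $f$, this is the Auslander-type algebra $\Lambda_{f,I}:=\End_{R[x]_{(f)}/(f^{t})}(\bigoplus_{i\in I}R[x]/(f^i))$ with $t=\max I$. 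Singularity categories are invariant under Morita equivalence and turn products into products. Hence $\Ds{A}\simeq\prod_f\Ds{\Lambda_{f,I_f}}$.

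\emph{Step 2: compute each block.} The module $R[x]/(f^t)$ is a projective generator of $\Lambda=\Lambda_{f,I}$, and $e\Lambda e=E:=R[x]/(f^t)$ is self-injective, with $E$ a local Frobenius ring whose residue field is $F=R[x]/(f)$. I would show $\Ds{\Lambda}\simeq\Ds{E}=\stmc{E}$ modulo the part coming from $\Lambda/\Lambda e\Lambda$. The natural tool is a Chen-type result that $\Ds{\Lambda}\simeq\Ds{e\Lambda e}$ whenever $\Lambda e\Lambda$ has finite projective dimension and $\Lambda/\Lambda e\Lambda$ has finite global dimension; both conditions should be checkable here by direct computation with the uniserial modules $R[x]/(f^i)$. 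This gives $\Ds{\Lambda_{f,I}}\simeq\stmc{R[x]/(f^t)}$. That category is zero for $t=1$, and for $t\ge2$ it is a Hom-finite category with indecomposables $R[x]/(f^i)$, $1\le i<t$. Combining, $\Ds{A}\simeq\prod_{f,\,t_f\ge2}\stmc{R[x]/(f^{t_f})}$, where $f^{t_f}$ is the maximal elementary divisor at $f$. Presumably this is exactly the data that $Sg$-equivalence records. I would check against the definition in Section \ref{sect3} whether the multiplicity data enters beyond this; if it does, I would recover it from the stable category.

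\emph{Step 3: the equivalences.} (i)$\Rightarrow$(ii) is trivial. For (iii)$\Rightarrow$(i): if $c$ and $d$ are $Sg$-equivalent, pair the factors. A triangle equivalence $\stmc{R[x]/(f^t)}\simeq\stmc{R[x]/(g^t)}$ holds when $R[x]/(f^t)\cong R[x]/(g^t)$ as $R$-algebras. In particular it holds when $F\cong R[x]/(g)$ and both algebras are truncated polynomial rings $F[y]/(y^t)$, which is the case for separable $f$ by Cohen structure theory. For (ii)$\Rightarrow$(iii), I would extract invariants from an $R$-linear equivalence. The indecomposable blocks correspond to connected components. Within a block, the number of indecomposables is $t-1$, which recovers $t$. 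The endomorphism ring of an object such as the simple $F$ in the stable category is $F$ (for $t\ge2$), which recovers $F$ up to $R$-algebra isomorphism. I would also use the AR-quiver, a stable tube of rank one (the loop $\Omega F\cong R[x]/(f^{t-1})$), to recover the exponent.

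The main obstacle I expect is matching the invariants exactly with the definition of $Sg$-equivalence. This involves two delicate points. First, is the irreducible factor $f$ determined only through the field $R[x]/(f)$ as an $R$-algebra, or through the whole local algebra $R[x]/(f^t)$? In the inseparable case these may differ, and the stable category recovers only the latter up to stable equivalence. Second, I must make sure that a mere $R$-linear equivalence, as in (ii), remembers enough, for instance $t$ through the number of indecomposables and $F$ through endomorphism rings. I would first treat the case where $R[x]/(f^t)\cong F[y]/(y^t)$, and then handle the general case by showing that the stable endomorphism ring of $E$ viewed over $E/\soc$ determines $E$ up to isomorphism.
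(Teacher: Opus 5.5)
\textbf{There is a genuine gap in Step~2, and Step~3 is built on it.} Your claim $\Ds{\Lambda_{f,I}}\simeq\stmc{R[x]/(f^{t})}$ with $t=\max I$ is false as soon as $|I|\ge 2$.

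Already for $c=J_2(0)\oplus J_1(0)$, i.e.\ $I=\{2,1\}$, the algebra $\Lambda$ is the Auslander algebra of $R[x]/(x^2)$. It has global dimension $2$, so $\Ds{\Lambda}=0$, whereas $\stmc{R[x]/(x^2)}\neq 0$. More generally, your formula would make $\Ds{S_n(c,R)}$ depend only on the minimal polynomial of $c$. In Example~\ref{senpnob}, $c=J_2(0)\oplus J_4(0)$ gives $\stmc{R[x]/(x^2)}\times\stmc{R[x]/(x^2)}$, not $\stmc{R[x]/(x^4)}$.

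The idempotent reduction is where it breaks. In the example $I=\{2,1\}$, $\Lambda/\Lambda e\Lambda\simeq R$ is semisimple. Also $\Lambda e\Lambda$ (the endomorphisms of $M$ factoring through projective $E$-modules) has projective dimension $1$. So the hypotheses you quote hold, yet the conclusion fails. A correct reduction to $e\Lambda e$ also needs $e\Lambda$ to be perfect over $e\Lambda e$, so that the Schur functor preserves perfect complexes. Here $e\Lambda\cong\bigoplus_{i\in I}R[x]/(f^i)$ over the self-injective algebra $E$, which has infinite projective dimension whenever $I\neq\{t\}$. Checking afterwards against the definition of $Sg$-equivalence cannot repair this. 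That definition records the multisets $\mathcal{D}_{c,i}$ of consecutive differences of exponents, and your product does not contain them.

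\textbf{The missing idea} is that $\Ds{\Lambda}$ is a stable category relative to $\add(M)$, not relative to projective $E$-modules. The paper uses a result of Kalck, Iyama, Wemyss and Yang: $\Hom_E(M,-)$ identifies $E\modcat$ with $\gp{\Lambda}$, so $\Ds{\Lambda}\simeq\Stgp{\Lambda}\simeq E\modcat/[M]$. Write $I=\{t_1>\cdots>t_s\}$ and $t_{s+1}=0$. This quotient splits into mutually orthogonal triangulated pieces $\add\big(\bigoplus_{j=t_{i+1}}^{t_i}M(j)\big)/[M(t_{i+1})\oplus M(t_i)]$. Each piece is triangle equivalent to $\stmc{R[x]/(f^{t_i-t_{i+1}})}$ via $X\mapsto X/\soc^{t_{i+1}}(X)$ (Lemma~\ref{tcf}). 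Hence Proposition~\ref{scoeog} gives $\Ds{\Lambda}\simeq\prod_{d\in\mathcal{D}_I}\stmc{R[x]/(f^d)}$. There is one block for each gap $t_i-t_{i+1}\ge 2$, not a single block $\stmc{R[x]/(f^{t_1})}$.

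Your Step~3 tools are essentially Lemmas~\ref{preserve} and~\ref{St-i}: block preservation via connected stable AR-quivers, the exponent from the number of indecomposables, and the residue field from stable endomorphism rings. They do work once applied to this product. Two further points:
\begin{itemize}
\item Your inseparability worry is settled by Lemma~\ref{St-i}. For $k\ge 2$, $\stmc{R[x]/(f^k)}\simeq\stmc{R[x]/(g^k)}$ holds as soon as $R[x]/(f)\simeq R[x]/(g)$, so you need not restrict to separable $f$.
\item Blocks coming from different irreducibles with isomorphic residue fields are indistinguishable. So one can recover only the gap multisets pooled over each residue-field class, namely the $\mathcal{D}_{c,i}$. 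This is exactly how $Sg$-equivalence is phrased.
\end{itemize}
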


The proof of this result is based on a complete description of the singularity category of $S_n(c,R)$ (see Theorem \ref{scocma}). For permutation matrices, any singular equivalence between their centralizer matrix algebras induces a singular equivalence of their singular parts. Now let us explain this point precisely.

Let $\Sigma_n$ denote the symmetric group of permutations on $\{1,2,\cdots,n\}$, and let $\sigma = \sigma_1 \cdots \sigma_s \in \Sigma_n$ be a product of disjoint cycles, and  $\lambda = (\lambda_1, \ldots, \lambda_s)$ be its cycle type. For a prime number $p>0$, the cycle $\sigma_i$ is said to be \emph{$p$-regular} if $p\nmid \lambda_i$, and \emph{$p$-singular} if $p\mid \lambda_i$. Let $r(\sigma)$ (respectively, $s(\sigma)$) be the product of the $p$-regular (respectively, $p$-singular) cycles of $\sigma$. We consider both $r(\sigma)$ and $s(\sigma)$ as elements in $\Sigma_n$.
Let $c_{\sigma}\in M_n(R)$ denote the permutation matrix of $\sigma\in \Sigma_n$.

For a positive integer $n$, let $\nu_p(n)$ be the maximal power index of $p$ such that $p^{\nu_p(n)}$ divides $n$.

\medskip
For the centralizer matrix algebras of permutation matrices, we have the following result.

\begin{Theo}{\rm [Theorem \ref{seispop}]}\label{thm2} Let $R$ be a field of characteristic $p>0$, and let $\sigma\in \Sigma_n$ and $\tau\in\Sigma_m$ be of cycle types $(\lambda_1, \ldots, \lambda_s)$ and $(\mu_1,\cdots,\mu_t)$, respectively.
Suppose $p\neq 2$ or $p=2$ and $\nu_p(\lambda_i)\neq 1\neq \nu_p(\mu_{j})$ for all $i\in [s]$ and $j\in[t]$. If $S_n(c_{\sigma},R)$ and $S_m(c_{\tau},R)$ are singularly equivalent, then so are $S_n(c_{s(\sigma)},R)$ and $S_m(c_{s(\tau)},R)$.
\end{Theo}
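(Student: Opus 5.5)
The plan is to reduce everything to Theorem \ref{thm1}, i.e.\ to Theorem \ref{seocma}. A singular equivalence between $S_n(c_\sigma,R)$ and $S_m(c_\tau,R)$ gives that $c_\sigma$ and $c_\tau$ are $Sg$-equivalent. It then suffices to show that $c_{s(\sigma)}$ and $c_{s(\tau)}$ are $Sg$-equivalent, and to apply Theorem \ref{thm1} once more in the converse direction. So the whole proof becomes a computation with elementary divisors of permutation matrices, followed by a check that the $Sg$-data do not see the $p$-regular cycles.

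\textbf{Step 1: elementary divisors of $c_\sigma$.} The matrix $c_\sigma$ is similar to the block-diagonal sum of the companion matrices of $x^{\lambda_i}-1$. Write $\lambda_i=p^{a_i}q_i$ with $p\nmid q_i$. Since $\operatorname{char}R=p$, we have $x^{\lambda_i}-1=(x^{q_i}-1)^{p^{a_i}}$, and $x^{q_i}-1$ is separable. Hence the elementary divisors contributed by $\sigma_i$ are exactly $f^{p^{a_i}}$, where $f$ runs through the distinct monic irreducible factors of $x^{q_i}-1$. In particular, a $p$-regular cycle contributes only elementary divisors of exponent $1$, while a $p$-singular cycle contributes elementary divisors of exponent $p^{a_i}\geq p$. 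Under the hypothesis of the theorem, every $p$-singular exponent is at least $3$: if $p\neq 2$ then $p^{a_i}\geq p\geq 3$, and if $p=2$ then $\nu_2(\lambda_i)\neq 1$ forces $a_i=0$ or $a_i\geq 2$, so $2^{a_i}\geq 4$. Consequently, for each irreducible $f$, the elementary divisors of $c_\sigma$ that are powers of $f$ are a number of copies of $f^1$ coming from regular cycles, together with powers $f^{p^{a}}$ with $p^a\geq 3$ coming from singular ones. The elementary divisors of $c_{s(\sigma)}$ are obtained simply by deleting the copies of $f^1$ coming from regular cycles.

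\textbf{Step 2: the $Sg$-data ignore those exponent-one divisors.} By the description of the singularity category in Theorem \ref{scocma}, the data entering the definition of $Sg$-equivalence are the irreducible factors $f$ whose maximal elementary-divisor exponent is $\geq 2$, together with the combinatorial multiplicity data attached to them. An $f$ whose maximal exponent is $1$ contributes a semisimple (or at least Gorenstein-trivial) block and no singular part. So the $f$'s that occur only in regular cycles drop out for both $c_\sigma$ and $c_{s(\sigma)}$.

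For the remaining $f$, the maximal exponent is some $p^a\geq 3$. I expect the relevant combinatorial data to involve only the multiplicity of the top exponent and the pattern of exponents strictly between $1$ and the top. Heuristically, the singular part of the local block $R[x]/(f^e)$-type piece is governed by modules that are not projective over the top exponent. Exponent-one elementary divisors correspond to summands that the singularity category identifies away: their contribution is a projective-injective, or Morita-negligible, summand in the reduction of Theorem \ref{scocma}.

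\textbf{Main obstacle.} The crux is to verify precisely from the definition of $Sg$-equivalence in Section \ref{sect3} that the multiplicity of the exponent $1$ is not part of the invariant whenever the top exponent is at least $3$. This is exactly where the excluded case $p=2$, $\nu_2=1$ (top exponent $2$) should fail: when the top exponent is $2$, the exponent $1$ is the only lower exponent and presumably does enter the data. I would first write out the $Sg$-data for a single $f$ with exponent multiset $\{1^{k},e_1,\dots,e_r\}$ with $e_j\geq 3$, and compare it with the data for $\{e_1,\dots,e_r\}$.

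\textbf{Step 3: conclusion.} Once this comparison is done, any bijection of irreducible factors and matching of multiplicity data that witnesses the $Sg$-equivalence of $c_\sigma$ and $c_\tau$ restricts to one witnessing the $Sg$-equivalence of $c_{s(\sigma)}$ and $c_{s(\tau)}$. This is because the irreducible factors carrying singular data are the same for $c_\sigma$ and $c_{s(\sigma)}$. Theorem \ref{thm1} then finishes the proof.
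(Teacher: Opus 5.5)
\textbf{There is a genuine gap.} Your reduction to $Sg$-equivalence via Theorem \ref{seocma} is fine, and so is your computation of elementary divisors in Step 1. Step 2, however, rests on a claim that is false, and Step 3 depends on it.

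\textbf{Why Step 2 fails.} The multiplicity $k$ of the exponent $1$ is indeed irrelevant, since only the set $P_c(f(x))$ enters. But whether $1\in P_c(f(x))$ at all is \emph{not} invisible to the $Sg$-data. For $T=\{t_1>\cdots>t_k\}$ we have $\mathcal{H}_T=\{\{t_1-t_2,\dots,t_{k-1}-t_k,t_k\}\}$. So if $t_k=1$, the entry $t_{k-1}-1$ lies in $\mathcal{D}_T$, and deleting $1$ from $T$ replaces it by $t_{k-1}$.

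Module-theoretically, $M(1)$ is the simple non-projective module, and adding it to the generator changes the quotient in Proposition \ref{scoeog}. For instance, $T=\{3,1\}$ gives $\stmc{R[x]/(f(x)^2)}$, whereas $T=\{3\}$ gives $\stmc{R[x]/(f(x)^3)}$.

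\textbf{A concrete counterexample.} Take $p=3$ and $\sigma$ of cycle type $(6,2)$.
\begin{itemize}
\item For $f=x+1$: $P_{c_\sigma}(x+1)=\{1,3\}$ but $P_{c_{s(\sigma)}}(x+1)=\{3\}$.
\item For $f=x-1$: $P(x-1)=\{1,3\}$ for both. The regular $2$-cycle becomes two fixed points, so your description of $\mathcal{E}_{c_{s(\sigma)}}$ is wrong for $f=x-1$.
\end{itemize}
Hence $\mathcal{D}_{c_\sigma,1}=\{\{2,2\}\}\neq\{\{2,3\}\}=\mathcal{D}_{c_{s(\sigma)},1}$. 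So $c_\sigma$ and $c_{s(\sigma)}$ are not even $Sg$-equivalent, and nothing ``restricts'' automatically from $(c_\sigma,c_\tau)$ to $(c_{s(\sigma)},c_{s(\tau)})$.

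There is a further problem. An $Sg$-equivalence only gives equalities $\mathcal{D}_{c,i}=\mathcal{D}_{d,(i)\delta}$ of multisets pooled over each class. It does not give a bijection of irreducible factors, so there is no factor-wise witness to restrict in the first place.

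\textbf{What is missing.} You need to show that the data change in a controlled way, and that this change is forced to be the same on both sides.
\begin{itemize}
\item Under the hypotheses, $\mathcal{I}_{c_{s(\sigma)}}=\mathcal{I}_{c_\sigma}$ by Lemma \ref{sseoiff}.
\item For $f(x)\neq x-1$ with $1\in P_{c_\sigma}(f(x))$, one has $L_2(P_{c_\sigma}(f(x)))=\{1,p^j\}$ with $p^j>2$. Passing to $s(\sigma)$ replaces exactly one entry $p^j-1$ by $p^j$. For $x-1$ nothing changes.
\item All power indices are $p$-powers. So by Lemma \ref{slopop}, an entry $p^j-1$ with $p^j>2$ in $\mathcal{D}_{c,i}$ can only come from such a pair $\{1,p^j\}$: it is neither a difference $p^a-p^b$ with $b\geq 1$ nor a power $p^a$.
\item Therefore $\mathcal{D}_{c_\sigma,i}=\mathcal{D}_{c_\tau,(i)\delta}$ splits into matching pieces. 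This yields a bijection $\tilde{\mathcal{I}}_{c_\sigma,i}\to\tilde{\mathcal{I}}_{c_\tau,(i)\delta}$ preserving $L_2$.
\item You must then arrange that $x-1$ goes to $x-1$, since it is the one factor whose entry does not change. This uses $P_c(x-1)\supseteq P_c(f(x))$ for permutation matrices, so the $p^j$ attached to $x-1$ is minimal. A swap argument then finishes it (Lemma \ref{preforcor}).
\end{itemize}

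\textbf{Role of the $p=2$ hypothesis.} This is where it really enters, in two ways. It ensures $p^j-1>1$, so the entry is recorded in $\mathcal{D}$ and is uniquely decomposable as a difference of $p$-powers. It also excludes $P=\{1,2\}$, which would break $\mathcal{I}_{c_{s(\sigma)}}=\mathcal{I}_{c_\sigma}$ (Example \ref{cnr}).
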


Furthermore, we give sufficient conditions in Corollary \ref{one-more} to ensure that Morita, derived, stable and singular equivalences induce each other between the centralizer matrix algebras of permutation matrices.

\medskip
In the representation theory and homological algebra of finite-dimensional algebras, homological conjectures have been a core set of problems.  Let us recall the not yet solved Auslander--Reiten conjecture or generalized Nakayama conjecture (ARC/GNC) and the Cartan determinant conjecture (CDC).

Let $\Lambda$ be an Artin algebra, and let $M$ be a finitely generated $\Lambda$-module. If $\Ext^i_{\Lambda}(M,M)=0$ for all $i>0$, then $M$ is said to be \emph{self-orthogonal}. If each indecomposable projective $\Lambda$-module is isomorphic to a direct summand of $M$, then $M$ is called a \emph{generator} over $\Lambda$.

\medskip
(ARC/GNC) If $M$ is a self-orthogonal generator over $\Lambda$, then $M$ is projective (see \cite{ar}).

(CDC) If the global dimension of $\Lambda$ is finite, then the Cartan determinant of $\Lambda$ is $1$ (see \cite{z1}).

\medskip
Though lots of efforts have been made in the last decades, these important conjectures are still open to date. Applying the ideas in this article, we will verify
these major conjectures for centralizer matrix algebras.

\begin{Theo}\label{hchfcma}
$(1)$ The Auslander--Reiten conjecture and the Cartan determinant conjecture hold true for centralizer matrix algebras over fields.

$(2)$ Let $R$ be a field, $c\in M_n(R)$ and $d\in M_m(R)$. If $S_n(c,R)$ and $S_m(d,R)$ are singularly equivalent, then

$\quad (i)$ $S_n(c,R)$ is quasi-hereditary if and only if $S_m(d,R)$ is quasi-hereditary.

$\quad (ii)$ The Cartan determinants of $S_n(c,R)$ and $S_m(d,R)$ are equal.
\end{Theo}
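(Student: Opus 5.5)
The plan is to reduce everything to the structure theory of centralizer matrix algebras that the paper develops later. By \cite{lx1, xz-LAA}, $S_n(c,R)$ is Morita equivalent to a finite direct product of algebras
\[
\Lambda(f,\mathbf{m}) := \End_{R[x]/(f^{m_1})\times\cdots}\Bigl(\bigoplus_j R[x]/(f^{m_j})\Bigr),
\]
one factor for each irreducible factor $f$ of the minimal polynomial of $c$. Here the $m_j$ run over the distinct exponents of the elementary divisors $f^{m_j}$. Each factor is in turn Morita equivalent to the Auslander-type algebra
\[
E_f=\End_{K_f}\Bigl(\bigoplus_{j} K_f/(x^{m_j})\Bigr), \qquad K_f=R[x]/(f^{m})\cong \text{(a truncated polynomial ring over a field extension)},
\]
with $m$ the maximal exponent. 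All conjectures and invariants in the statement are Morita invariant and behave additively or multiplicatively on products: a product is quasi-hereditary iff each factor is, the Cartan determinant multiplies, and ARC holds iff it holds factorwise. So it suffices to understand $E=\End_{\Gamma}(\Gamma\oplus M)$, where $\Gamma=F[x]/(x^m)$ is self-injective and $M$ is a direct sum of some $F[x]/(x^{k})$ with $k<m$.

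\textbf{Step 1 (Gorensteinness and singularity category).} $\Gamma\oplus M$ is a generator-cogenerator, so $E$ has dominant dimension at least $2$. I will use the description (Theorem \ref{scocma}) to identify $\Ds{E}$ as the stable category of Gorenstein projectives of $E$. This category is zero exactly when all $k$ with $1\le k<m$ occur (then $E$ is the Auslander algebra of $\Gamma$, of finite global dimension). Otherwise it is nonzero, and then $\gd E=\infty$. Concretely, $E$ is quasi-hereditary iff $\gd E<\infty$ iff $\Ds{E}=0$. The "if" uses that the Auslander algebra of $F[x]/(x^m)$ is quasi-hereditary; the "only if" is the general fact that quasi-hereditary algebras have finite global dimension.

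\textbf{Step 2 (the conjectures).} CDC only concerns factors with $\gd<\infty$, i.e.\ Auslander algebras of $F[x]/(x^m)$. Their Cartan matrix is $(\min(i,j))_{i,j\le m}$ up to the factor $[F:R]$ in the entries. Working over $F$, or noting that the Cartan determinant is computed with respect to $\dim_{\End(S)}$, its determinant is $1$. This is the standard LU factorisation of the min-matrix, or follows from quasi-heredity with $\Delta$'s having simple top and standard filtration. For ARC, I will use that $E$ is Gorenstein: $\End_\Gamma(\Gamma\oplus M)$ over self-injective $\Gamma$ has finite injective dimension on both sides, at most $2$ by the dominant dimension argument. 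ARC holds for Gorenstein algebras, since a self-orthogonal generator $T$ over a Gorenstein algebra is Gorenstein projective and also satisfies $\Ext^i(T,\Lambda)=0$. Hence $T$ is a summand-closed self-orthogonal Gorenstein projective generator. Then the standard argument via its syzygy-closure, or the known validity of ARC for Gorenstein algebras of finite CM type (which holds here since $\Gamma$ has finite type), forces $T$ projective. Alternatively I would cite that ARC holds for algebras of finite representation type's endomorphism algebras via Auslander's results.

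\textbf{Step 3 (invariants under singular equivalence).} By Theorem \ref{thm1}, a singular equivalence means $c$ and $d$ are $Sg$-equivalent. Quasi-heredity is equivalent to $\Ds{-}=0$ by Step 1, and a singular equivalence preserves vanishing, which gives (i). For (ii), I would split each algebra into its "regular" factors, those with $\Ds{}=0$ and Cartan determinant $1$ by Step 2, and its "singular" factors. I then check from the definition of $Sg$-equivalence that the singular factors correspond bijectively with the same data. Here "same data" means the same field extension and the same maximal exponent and missing-exponent pattern, or at least data determining the Cartan determinant. The Cartan determinant of $E$ should be computable as a function of the exponents. For the min-matrix restricted to the index set $\{k_1<\dots<k_r=m\}$ it is $\prod (k_{i}-k_{i-1})$ with $k_0=0$.

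\textbf{Main obstacle.} The main obstacle is this last step: verifying that $Sg$-equivalence preserves exactly the combinatorial quantity $\prod_i (k_i-k_{i-1})$ across all singular blocks. If the definition only matches the blocks up to some reordering of multiplicities, I will compute $\Ds{E}$ for a single block explicitly. It should be equivalent to $\bigoplus$ stable categories of $F[x]/(x^{k_i-k_{i-1}})$ for the gaps. I will then read the determinant off from the gaps recovered from $\Ds{E}$.
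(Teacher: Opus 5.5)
Your route is essentially the paper's. You split $S_n(c,R)$ into blocks $\End_{U_i}(\mathcal{B}(M_i))$ and use that the Cartan matrix of $\End_A(\bigoplus_i M(t_i))$ is the min-matrix with determinant $t_s\prod_i(t_i-t_{i+1})$ (Lemma~\ref{cmoe}). Finite global dimension occurs exactly for the Auslander algebra (Corollary~\ref{gdoeog}), which is quasi-hereditary by Dlab--Ringel. For (2)(ii), $\det(C)=\prod_{i\in\mathcal{U}_c}i$, and $\mathcal{U}_c$ can be recovered from the singularity category. Your proof of (2)(i) via $\gd<\infty\iff\Ds{-}=0$ is a bit more direct than the paper's detour through $\mathcal{I}_c$.

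Three of your justifications are wrong as stated.

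First, Gorensteinness of $E=\End_\Gamma(\Gamma\oplus M)$ does not follow from dominant dimension $\ge 2$, which by itself gives no bound on injective dimension. It holds here because $A$ is representation-finite with Auslander algebra of global dimension $2$. This is what the paper uses, via \cite[Theorem 1.1(2)]{xz2} and \cite[Theorem 1.1]{kiwy1}, as in Proposition~\ref{scoeog}.

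Second, ``ARC holds for Gorenstein algebras'' is not a known theorem. Self-injective algebras are Gorenstein, so it would settle Tachikawa's second conjecture, which is open. Only your CM-finite branch is valid, and it is exactly the paper's argument: $\Hom_A(M,-)\colon A\modcat\to\gp{E}$ is an equivalence, so $E$ is CM-finite Gorenstein, and \cite[Corollary 3.6]{z2} applies. A self-contained alternative also works. A self-orthogonal generator $T$ is Gorenstein projective, and in $\Stgp{E}\simeq\prod\stmc{R[x]/(f(x)^j)}$ one has $T[2]\simeq T$ because $\Omega^2M(i)\simeq M(i)$. Hence $0=\Ext^2_E(T,T)\simeq\StHom_E(T,T)$, and $T$ is projective.

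Third, your first idea in Step~3 is false: $Sg$-equivalence need not match the singular blocks bijectively with the same data. In Example~\ref{senpnob}, one algebra has a single singular block with gaps $\{\{2,2\}\}$, while the other has two blocks with gap $\{2\}$ each; the maximal exponents and block counts differ. What is preserved is only the multiset of gaps $\ge 2$, i.e.\ $\mathcal{U}_c=\mathcal{U}_d$, recovered from $\Ds{-}$ via Lemmas~\ref{preserve} and~\ref{St-i}. Your fallback, reading the determinant off these gaps, is the correct argument and coincides with the paper's proof of Theorem~\ref{cdaqha}.
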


The contents of this article read as follows.  In Section \ref{sec2} we recall notions and terminologies, and prepare basic facts for proofs. In Section \ref{sect3} we introduce new types of equivalence relations on matrices over fields. We also compare them with some known equivalence relations. In Section \ref{sec4} we describe the singularity categories of centralizer matrix algebras. In Section \ref{sec5} we establish characterizations of singular equivalences of centralizer matrix algebras in terms of a new type of equivalence relations of matrices. This gives a proof of Theorem \ref{thm1}. In Section \ref{sec6} we investigate singular equivalences between centralizer matrix algebras of permutation matrices, and prove Theorem \ref{thm2}. In Section \ref{sect5.2} we prove Theorem \ref{hchfcma}.

\section{Preliminaries}\label{sec2}

In this section we recall some basic definitions and terminologies on exact categories, Frobenius categories and triangulated categories. For our later proofs, we prove some properties of modules over quotients of polynomial algebras.

Throughout this paper, $R$ denotes a field unless stated otherwise. By an algebra we mean a finite-dimensional unital associative algebra over $R$. All modules are finitely generated left modules.

\subsection{Exact and Frobenius categories}
Let $\mathcal C$ be an additive category. A full subcategory $\mathcal{B}$ of $\mathcal{C}$ is always assumed to be closed under isomorphisms, that is, if $X\in {\mathcal B}$ and $Y\in\cal C$ with $Y\simeq X$, then $Y\in{\mathcal B}$.

Let $X$ be an object in $\mathcal{C}$. We denote by $\add(X)$ the full subcategory of $\mathcal{C}$ consisting of all direct summands of the coproducts of finitely many copies of $X$.

The composition of morphisms $f: X\to Y$ and $g: Y\to Z$ in $\mathcal C$ is written as $fg: X\to Z$. The induced morphisms $\Hom_{\mathcal
C}(Z,f):\Hom_{\mathcal C}(Z,X)\ra \Hom_{\mathcal C}(Z,Y)$ and $\Hom_{\mathcal C}(f,Z): \Hom_{\mathcal C}(Y, Z)\ra \Hom_{\mathcal{C}}(X, Z)$ are denoted by $f^*$ and $f_*$, respectively, while the composition of functors $F:\mathcal{C}\to\mathcal{D}$ and $G:\mathcal{D}\to\mathcal{E}$ between categories is denoted  by $G\circ F$ which is a functor from $\mathcal C$ to $\mathcal E$.

Let $\mathcal{D}$ be a full subcategory of $\mathcal{C}$. A morphism $f: X\to Y$ in $\mathcal{C}$ is called a \emph{right minimal} morphism if $\alpha\in\End_{\mathcal{C}}(X)$ is an isomorphism whenever $f=\alpha f$; a \emph{$\mathcal{D}$-epic} morphism if the induced map $f^*: \Hom_\mathcal{C}(D, X)\to \Hom_\mathcal{C}(D, Y)$ is surjective for any $D\in\mathcal{D}$; and a \emph{right $\mathcal{D}$-approximation} of $Y$ if $X\in \mathcal{D}$ and $f$ is $\mathcal{D}$-epic.  If $f$ is both a right minimal morphism and a right $\mathcal{D}$-approximation of $Y$, then $f$ is called a right \emph{minimal $\mathcal{D}$-approximation} of $Y$. Dually, one defines the notions of $\mathcal{D}$-monic morphisms and left (minimal) $\mathcal{D}$-approximations in $\mathcal{C}$.

A diagram $X \xr{\lambda} Y \xr{\pi} Z$ of morphisms in $\mathcal{C}$ is called a \emph{kernel-cokernel pair} $(\lambda,\pi)$  if $\lambda$ is the kernel of $\pi$ and  $\pi$ is the cokernel of $\lambda$. Given another kernel-cokernel pair $(\lambda',\pi')$ in $\mathcal{C}$, if there are isomorphisms $a:X\to X', b: Y\to Y'$ and $c:Z\to Z'$ in $\mathcal{C}$ such that $\lambda b=a\lambda'$ and $\pi c= b\pi'$, we say that the pairs $(\lambda,\pi)$ and $(\lambda',\pi')$ are \emph{isomorphic}.
Let $\mathcal{S}$ be a collection of kernel-cokernel pairs in $\mathcal{C}$ which is closed under isomorphisms. A morphism $\lambda:X\to Y$ in $\mathcal{C}$ is called an \emph{admissible monomorphism of $\mathcal{S}$} if there is a morphism $\pi:Y\to Z$ in $\mathcal{C}$ such that $(\lambda,\pi)\in \mathcal{S}$. Dually, we define admissible epimorphisms of $\mathcal{S}$. The elements of $\mathcal{S}$ are called \emph{admissible exact sequences} of $\mathcal{S}$. If $\mathcal{C}$ together with $\mathcal{S}$ forms an exact category, we often say that $(\mathcal{C},\mathcal{S})$ is an exact category. We refer to \cite{q1}
for the precise formulation of exact categories.

Let $(\mathcal{C},\mathcal{S})$ and $(\mathcal{C}^\prime,\mathcal{S}^\prime)$ be exact categories. An additive functor $F:\mathcal{C}\to \mathcal{C}^\prime$ is called an \emph{exact functor} if $F$ sends  admissible exact sequences of $\mathcal{S}$ to admissible exact sequences of $\mathcal{S}^\prime$. In this case, we often say that $F: (\mathcal{C},\mathcal{S})\to (\mathcal{C}^\prime,\mathcal{S}^\prime)$ is an exact functor.

Let $\mathcal{D}$ be a full additive subcategory of an exact category $(\mathcal{C},\mathcal{S})$, and let $\tilde{\mathcal{S}}:=\{(\lambda,\pi)\in \mathcal{S} \mid \lambda, \pi \in \mathcal{D}\}$. 
The pair $(\mathcal{D},\tilde{\mathcal{S}})$ is called a \emph{fully exact subcategory} of $(\mathcal{C},\mathcal{S})$ if $(\mathcal{D},\tilde{\mathcal{S}})$ is closed under extensions in $(\mathcal{C},\mathcal{S})$, that is, for an admissible exact sequence $X\to Y\to Z$ of $\mathcal{S}$ with $X,Z \in \mathcal{D}$, we have $Y\in \mathcal{D}$. In this case, the inclusion $(\mathcal{D},\tilde{\mathcal{S}})\subseteq (\mathcal{C},\mathcal{S})$ is a fully faithful exact functor.

Let $(\mathcal{C},\mathcal{S})$ be an exact category. An object $P$ in $\mathcal{C}$ is said to be \emph{$\mathcal{S}$-projective} if, for every admissible epimorphism $f: Y \to Z$ of $\mathcal{S}$, the induced morphism $f^*:\Hom_\mathcal{C}(P,Y)\to \Hom_\mathcal{C}(P,Z)$ is surjective. We say that $(\mathcal{C},\mathcal{S})$ has \emph{enough $\mathcal{S}$-projectives} if, for every object $C \in \mathcal{C}$, there exists an admissible epimorphism $P \to X$ of $\mathcal{S}$ such that $P$ is $\mathcal{S}$-projective. Dually, one defines the notions of \emph{$\mathcal{S}$-injective} objects and exact categories with \emph{enough $\mathcal{S}$-injectives}. We denote by ${\rm Proj}_\mathcal{S}({\mathcal{C}})$ (respectively, ${\rm Inj}_\mathcal{S}({\mathcal{C}})$) the full subcategory of $\mathcal{C}$ consisting of all $\mathcal{S}$-projective (respectively, $\mathcal{S}$-injective) objects.

A \emph{Frobenius category} is an exact category $(\mathcal{C},\mathcal{S})$ which has both enough $\mathcal{S}$-projectives and $\mathcal{S}$-injectives, such that ${\rm Proj}_\mathcal{S}({\mathcal{C}})$ and ${\rm Inj}_\mathcal{S}({\mathcal{C}})$  coincide. In this case, the quotient $\mathcal{C}/{\rm Inj}_\mathcal{S}({\mathcal{C}})$ is a triangulated category \cite[Theorem 2.6, p.16]{h1}. Frobenius categories $(\mathcal{C},\mathcal{S})$ and $(\mathcal{C}^\prime,\mathcal{S}^\prime)$ are \emph{equivalent} if there is an exact functor $F:(\mathcal{C},\mathcal{S})\to (\mathcal{C}^\prime,\mathcal{S}^\prime)$ such that $F:\mathcal{C}\to\mathcal{C}'$ is an equivalence of additive categories and induces an equivalence: $\rm{Proj}_\mathcal{S}(\mathcal{C})\to \rm{Proj}_\mathcal{S'}(\mathcal{C}')$ of additive categories. More generally, if $F:(\mathcal{C},\mathcal{S})\to (\mathcal{C}',\mathcal{S}')$ is an exact functor between Frobenius categories $(\mathcal{C},\mathcal{S})$ and $(\mathcal{C}^\prime,\mathcal{S}^\prime)$ such that $F$ sends objects in ${\rm Inj}_\mathcal{S}({\mathcal{C}})$ to objects in ${\rm Inj}_\mathcal{S'}({\mathcal{C}'})$, then $F$ induces a triangle functor $\underline{F}:\mathcal{C}/{\rm Inj}_\mathcal{S}({\mathcal{C}})\to \mathcal{C}'/{\rm Inj}_\mathcal{S'}({\mathcal{C}'})$ (see \cite[Example 8.1]{k1}).
Consequently, we get the following result.

\begin{Koro}\label{eeite}
Let $(\mathcal{C},\mathcal{S})$ be a Frobenius category and let $(\mathcal{C}^\prime,\mathcal{S}^\prime)$ be a full exact subcategory of $(\mathcal{C},\mathcal{S})$. If $(\mathcal{C}^\prime,\mathcal{S}^\prime)$ is a Frobenius category such that ${\rm Inj}_\mathcal{S'}({\mathcal{C}'})=\mathcal{C}^\prime\cap {\rm Inj}_\mathcal{S}({\mathcal{C}})$, then the inclusion functor $F:(\mathcal{C}^\prime,\mathcal{S}^\prime)\to(\mathcal{C},\mathcal{S})$ induces a fully faithful triangle functor $\underline{F}:\mathcal{C}^\prime/{\rm Inj}_\mathcal{S'}({\mathcal{C}'})\to \mathcal{C}/{\rm Inj}_\mathcal{S}({\mathcal{C}})$.
\end{Koro}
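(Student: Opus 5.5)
The plan has two parts. First I will check that the inclusion $F:(\mathcal{C}',\mathcal{S}')\to(\mathcal{C},\mathcal{S})$ satisfies the hypotheses of the general remark just before the corollary, so that it induces a triangle functor $\underline{F}$. Then I will prove full faithfulness directly on morphism spaces of the stable categories.

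For the first part, $F$ is exact because $\mathcal{S}'$ consists of sequences in $\mathcal{S}$: for a fully exact subcategory, $\mathcal{S}'=\tilde{\mathcal{S}}$. The injectives are preserved because ${\rm Inj}_{\mathcal{S}'}(\mathcal{C}')=\mathcal{C}'\cap{\rm Inj}_{\mathcal{S}}(\mathcal{C})\subseteq{\rm Inj}_{\mathcal{S}}(\mathcal{C})$. By \cite[Example 8.1]{k1}, $F$ therefore induces a triangle functor $\underline{F}:\mathcal{C}'/{\rm Inj}_{\mathcal{S}'}(\mathcal{C}')\to\mathcal{C}/{\rm Inj}_{\mathcal{S}}(\mathcal{C})$. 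On objects it is the identity, and on morphisms it sends the class of $f$ to the class of $f$.

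For full faithfulness, fix $X,Y\in\mathcal{C}'$. Since $\mathcal{C}'$ is a full subcategory, $\Hom_{\mathcal{C}'}(X,Y)=\Hom_{\mathcal{C}}(X,Y)$, so $\underline{F}$ is full and it remains to show faithfulness. Concretely, if $f:X\to Y$ factors through some $\mathcal{S}$-injective $I\in\mathcal{C}$, I must show that $f$ factors through an $\mathcal{S}'$-injective object of $\mathcal{C}'$. Since $(\mathcal{C}',\mathcal{S}')$ is Frobenius, there is an admissible monomorphism $\lambda:X\to I'$ in $\mathcal{S}'$ with $I'\in{\rm Inj}_{\mathcal{S}'}(\mathcal{C}')$. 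Because $\mathcal{S}'\subseteq\mathcal{S}$, $\lambda$ is also an admissible monomorphism of $\mathcal{S}$. Now write $f=gh$ with $g:X\to I$ and $h:I\to Y$. Since $I$ is $\mathcal{S}$-injective, $g$ extends along $\lambda$: there is $g':I'\to I$ with $g=\lambda g'$. Then $f=\lambda(g'h)$ factors through $I'$, which is $\mathcal{S}'$-injective. So the kernel of $\Hom_{\mathcal{C}'}(X,Y)\to\underline{\Hom}_{\mathcal{C}}(X,Y)$ equals the ideal of maps factoring through ${\rm Inj}_{\mathcal{S}'}(\mathcal{C}')$, and $\underline{F}$ is faithful.

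The only step needing care is this faithfulness argument: it uses the injectivity of $I$ relative to the larger class $\mathcal{S}$, together with the fact that the $\mathcal{S}'$-admissible monomorphism $\lambda$ remains $\mathcal{S}$-admissible. Both facts are immediate from the definitions, so I expect no real obstacle. The hypothesis ${\rm Inj}_{\mathcal{S}'}(\mathcal{C}')=\mathcal{C}'\cap{\rm Inj}_{\mathcal{S}}(\mathcal{C})$ is used only in the inclusion direction needed for $\underline{F}$ to be well defined.
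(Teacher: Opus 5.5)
Your proposal is correct and follows the paper's route. The paper obtains the triangle functor $\underline{F}$ from the general fact (Keller's Example 8.1) that an exact functor between Frobenius categories sending injectives to injectives induces a triangle functor, which needs only $\mathcal{S}'\subseteq\mathcal{S}$ and ${\rm Inj}_{\mathcal{S}'}(\mathcal{C}')\subseteq{\rm Inj}_{\mathcal{S}}(\mathcal{C})$. The paper leaves full faithfulness implicit, and your faithfulness argument correctly fills in that step: you extend $g:X\to I$ along an $\mathcal{S}'$-admissible monomorphism $X\to I'$ into an $\mathcal{S}'$-injective, which remains $\mathcal{S}$-admissible, and the resulting map $I'\to Y$ lies in $\mathcal{C}'$ by fullness.
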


To get intrinsic descriptions of products of additive categories, we observe the following simple fact.
\begin{Lem}\label{acstb}
Let $\mathcal{C}$ be an additive (triangulated) category, and let $\mathcal{C}_1$ and $\mathcal{C}_2$ be full additive (triangulated) subcategories  of $\mathcal{C}$. Suppose $\Hom_\mathcal{C}(C_1,C_2)=0=\Hom_\mathcal{C}(C_2,C_1)$ for all $C_1\in \mathcal{C}_1$ and $C_2\in\mathcal{C}_2$. If, for any object $C\in \mathcal{C}$, there are $C_1\in \mathcal{C}_1$ and $C_2\in \mathcal{C}_2$ such that $C\simeq  C_1\oplus C_2$ in $\mathcal{C}$, then $\mathcal{C}\simeq \mathcal{C}_1\times \mathcal{C}_2$ as additive (triangulated) categories, where $\mathcal{C}_1\times \mathcal{C}_2$ is the product of categories.
\end{Lem}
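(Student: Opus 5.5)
The plan is to construct an explicit functor $\Phi:\mathcal{C}_1\times\mathcal{C}_2\to\mathcal{C}$ on objects by $(C_1,C_2)\mapsto C_1\oplus C_2$ (choosing a biproduct in $\mathcal{C}$), and on morphisms by $(f_1,f_2)\mapsto f_1\oplus f_2$. Additivity is immediate, since biproducts in the product category are componentwise. I will then verify that $\Phi$ is fully faithful and essentially surjective. Essential surjectivity is exactly the hypothesis that every $C$ is isomorphic to some $C_1\oplus C_2$.

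For full faithfulness I use the biproduct decomposition
$$\Hom_\mathcal{C}(C_1\oplus C_2,C_1'\oplus C_2')\simeq \bigoplus_{i,j}\Hom_\mathcal{C}(C_i,C_j').$$
The two off-diagonal terms vanish by the orthogonality hypothesis, because $C_1,C_1'\in\mathcal{C}_1$ and $C_2,C_2'\in\mathcal{C}_2$. The remaining diagonal terms are $\Hom_{\mathcal{C}_1}(C_1,C_1')\oplus\Hom_{\mathcal{C}_2}(C_2,C_2')$, since the subcategories are full, and this is precisely the Hom-group in $\mathcal{C}_1\times\mathcal{C}_2$. Hence $\Phi$ is an additive equivalence.

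In the triangulated case, I give $\mathcal{C}_1\times\mathcal{C}_2$ the componentwise shift, and declare a triangle distinguished when both components are. This is a triangulated category because each axiom can be checked componentwise.
\begin{itemize}
\item For the shift, $\mathcal{C}_i$ being a triangulated subcategory means it is closed under $[1]$. So $\Phi$ commutes with shift via the canonical isomorphism $(C_1\oplus C_2)[1]\simeq C_1[1]\oplus C_2[1]$.
\item $\Phi$ sends distinguished triangles to distinguished triangles, because a direct sum of distinguished triangles in $\mathcal{C}$ is distinguished. This is a standard consequence of the axioms: it follows from TR1--TR3 and the five-lemma argument for $\Hom$-functors, or from the fact that direct summands and sums of triangles are triangles.
\item Thus $\Phi$ is a fully faithful, essentially surjective triangle functor, and its quasi-inverse is then automatically a triangle functor.
\end{itemize}

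The only step needing real care is the triangulated one. It is not hard, but one must check that the triangulated structure on $\mathcal{C}_1\times\mathcal{C}_2$ matches the one transported from $\mathcal{C}$. Concretely, every distinguished triangle in $\mathcal{C}$ should be isomorphic to the image of a pair of distinguished triangles in the $\mathcal{C}_i$. I would show this by decomposing an arbitrary morphism $f:X\to Y$ in $\mathcal{C}$ as $f_1\oplus f_2$ (orthogonality forces $f$ to be diagonal) and completing each $f_i$ to a triangle inside $\mathcal{C}_i$. Their sum is then a triangle on $f$, so by uniqueness of cones up to isomorphism (TR3 plus the five lemma) the original triangle is isomorphic to it.
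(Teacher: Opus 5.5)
Your proof is correct. The paper states this lemma as a ``simple fact'' and gives no proof. Your verification is exactly the standard check it leaves to the reader: the sum functor $\Phi$, full faithfulness from the biproduct decomposition of $\Hom_\mathcal{C}(C_1\oplus C_2, C_1'\oplus C_2')$ together with orthogonality, essential surjectivity from the hypothesis, and the componentwise triangulated structure using that direct sums of distinguished triangles are distinguished. Your final paragraph is redundant once you invoke that a quasi-inverse of a triangle equivalence is a triangle functor, but it usefully makes explicit where closure of each $\mathcal{C}_i$ under shifts and cones is used.
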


\subsection{Singularity categories of algebras}\label{sect2.1}

Let $\Lambda$ be an Artin $\mathbb{K}$-algebra over a commutative Artin ring $\mathbb{K}$. By $\rad(\Lambda)$ we denote the Jacobson radical of $\Lambda$. Let $\Lambda^{\opp}$ stand for the opposite algebra of $\Lambda$. We write $\Lambda\modcat$ for the category of all finitely generated $\Lambda$-modules, and $\prj{\Lambda}$ (respectively, $\inj{\Lambda}$) for the full subcategory of $\Lambda\modcat$ consisting of projective (respectively, injective) $\Lambda$-modules.

For a $\Lambda$-module $M$, $\ell_\Lambda(M)$ and $LL(M)$ denote the composition length and Loewy length of $M$, respectively. The\emph{ basic module} of $M$ is denoted by $\mathcal{B}(M)$ which is, by definition, the direct sum of all non-isomorphic indecomposable direct summands of $M$. We write $M = \bigoplus_{i=1}^m M_i,$ where all $M_i$ are indecomposable, and denote $m$ by $\#(M)$. Clearly, $\#(M)$ is uniquely determined by $M$.

A module $M\in \Lambda\modcat$ is called a \emph{generator} (or \emph{cogenerator}) if $\add(_\Lambda\Lambda)\subseteq \add(M)$ (or $\add(D(\Lambda_\Lambda))$ $\subseteq \add(M)$), where $D: \Lambda\modcat\to \Lambda^{\opp}\modcat$ is the usual duality of an Artin algebra, and an \emph{additive generator} if $\Lambda\modcat = \add(M)$. A module $_\Lambda X$ is said to be \emph{$n$-self-orthogonal} if $\Ext_\Lambda^i(X,X)=0$ for all $1\le i\le n$,  and \emph{self-orthogonal} if it is $n$-orthogonal for all $n\ge 1$.

We recall the definition of Gorenstein projective objects.

\begin{Def} \label{gorenstein} Let $\mathcal{A}$ be an abelian category with enough projectives. An object $X\in \mathcal{A}$ is said to be  \emph{Gorenstein projective} if there is an exact sequence
$ \cpx{P}: \cdots \ra P^{-1}\ra P_0\lraf{d^0} P^1\ra P^2\ra \cdots$
of projective objects in $\mathcal{A}$ such that $X\simeq \Ker(d^0)$ and the complex
$\Hom_{\mathcal{A}}(\cpx{P},P)$ is exact for any projective object $P\in \mathcal{A}$.
\end{Def}

Let $\mathcal{A}\gp$ denote the full subcategory of all Gorenstein projective objects of $\mathcal{A}$.
If $\mathcal{A}= \Lambda\modcat$ for an Artin algebra $\Lambda$, then Gorenstein objects in $\mathcal{A}$ are called \emph{Gorenstein projective $\Lambda$-modules}. We write $\gp{\Lambda}$  for the category of all Gorenstein projective modules in $\Lambda\modcat$. Then $\gp{\Lambda}$ contains $\prj{\Lambda}$ and is closed under direct summands, extensions and kernels of epimorphisms. It is a resolving subcategory of $\Lambda\text{-}\mathrm{mod}$. If $\gp{\Lambda}$ has only finitely many non-isomorphic indecomposable modules, then we say that $\Lambda$ is \emph{CM-finite}.

Let $\stmc{\Lambda}:=\Lambda\modcat/\prj{\Lambda}$ be the \emph{stable module category} of $\Lambda$.  Since $\gp{\Lambda}$ contains $\prj{\Lambda}$, we have the stable subcategory $\Stgp{\Lambda}:=\gp{\Lambda}/\prj{\Lambda}$ of $\stmc{\Lambda}$. It is called the \emph{Gorenstein stable category} of $\Lambda$.
We define $$\mathcal{S}:=\{X\xr{\lambda} Y\xr{\pi}Z\mid (\lambda,\pi) \text{ is a kernel-cokernel pair in } \Lambda\modcat \text{ and }X, Y, Z\in \gp{\Lambda}\}.$$ It is well known that $(\gp{\Lambda},\mathcal{S})$ is a Frobenius $\mathbb{K}$-category with ${\rm Inj}_{\mathcal{S}}(\gp{\Lambda})=\prj{\Lambda}$. Thus $\Stgp{\Lambda}$ is a triangulated $\mathbb{K}$-category.

\smallskip
By replacing module categories and projective modules by derived categories and projective complexes, respectively, one gets the so-called singularity categories: Let $\Db{\Lambda}$ stand for the bounded derived category of $\Lambda\modcat$, and let $\Kb{\prj{\Lambda}}$ be the full subcategory of $\Db{\Lambda}$ consisting of the bounded complexes of finitely generated projective $\Lambda$-modules. Then $\Kb{\prj{\Lambda}}$ is a thick triangulated $\mathbb{K}$-subcategory of $\Db{\Lambda}$. Let $\Ds{\Lambda}:=\Db{\Lambda}/\Kb{\prj{\Lambda}}$ be the Verdier quotient of $\Db{\Lambda}$ by $\Kb{\prj{\Lambda}}$. This  triangulated $\mathbb{K}$-category is called the \emph{singularity category} of $\Lambda$.

If an Artin algebra $\Lambda$ is self-injective, then $\Ds{\Lambda}$ and $\stmc{\Lambda}$ are equivalent as triangulated $\mathbb{K}$-categories. More generally, if $\Lambda$ is a Gorenstein algebra (that is, the injective dimensions of ${}_\Lambda\Lambda$ and $\Lambda_{\Lambda}$ are finite), then  $\Ds{\Lambda}\simeq \Stgp{\Lambda}$ as triangulated $\mathbb{K}$-categories (see \cite{b2} or \cite{h2}).

\begin{Def}
Artin $\mathbb{K}$-algebras $\Lambda$ and $\Gamma$ are said to be

$(1)$ \emph{Stably equivalent} if their stable module categories $\Lambda\stmc$ and $\Gamma\stmc$ are equivalent as $\mathbb{K}$-categories. An equivalence $F: \stmc{\Lambda} \ra  \Gamma\stmc$ of $\,\mathbb{K}$-categories is called a \emph{stable equivalence} between $\Lambda$ and $\Gamma$.

$(2)$ \emph{Singularly equivalent} if their singularity categories $\Ds{\Lambda}$ and $\Ds{\Gamma}$ are equivalent as triangulated $\mathbb{K}$-categories. An equivalence $F:\Ds{\Lambda}\rightarrow\Ds{\Gamma}$ of triangulated $\mathbb{K}$-categories is called a \emph{singular equivalence} between $\Lambda$ and $\Gamma$.
\end{Def}

The above algebraical approach to singularity categories and singular equivalences was introduced and studied in \cite{b2}, while a geometrical approach was formulated and investigated independently in \cite{orlov}. Since then singularity categories and singular equivalences have been intensively studied.

\subsection{Basic facts on modules over quotients of polynomial algebras\label{sect2.3}}
In this subsection we prove some lemmas of modules over quotients of the polynomial algebra $R[x]$.

Let $\mathbb{Z}_{>0}$ be the set of positive integers. For $n\in\mathbb{Z}_{>0}$, let $[n]:=\{1,2, \cdots,n\}$ and $\Sigma_n$ be the symmetric group of permutations of $[n]$. We write the image of $i\in [n]$ under $\sigma\in \Sigma_n$ as $(i)\sigma$. The cardinality of a set $S$ is denoted by $|S|$.

In the rest of this subsection, we fix an irreducible polynomial $f(x)\in R[x]$ of degree $u$ and set $A:=R[x]/(f(x)^n)$ for $n\geq 1$.
Then $A$ is a local, commutative, symmetric, Nakayama algebra 
with $n$ (non-isomorphic) indecomposable modules $M(i)$ for $i\in [n]$, where $M(i)$ equals $R[x]/(f(x)^i)\in A\modcat$ for $i\in [n]$. We set $M(0)=0$. Clearly, $M(i)\simeq A/\rad^i(A)$ as $A$-modules for $0\le i\le n.$

For $0\le i\le j \le n$, we denote by
$$f_{i,j}: M(i)\ra M(j), \; x^k+(f(x)^i)\mapsto f(x)^{j-i}\cdot x^k+(f(x)^j) \mbox{ for } 0\le k\le ui-1,\ \mbox{ and } $$ $$g_{j,i}: M(j)\ra M(i), \; x^k+(f(x)^j)\mapsto x^k+(f(x)^i) \mbox{ for } 0\le k\le uj-1$$ the canonical injective and surjective homomorphisms, respectively. Clearly, $f_{i,j}g_{j,k}=0$ if $i+k \le j.$ 

\begin{Lem}\label{ftapi}
Let $i, j, k\in [n]$ with $j, k\ge i$, and let $f:M(i)\to M(k), g:M(k)\to M(i)$ and $h:M(j-i)\to M(k-i)$ be homomorphisms of $A$-modules.

$(1)$ If $k\ge j$, then $f$ and $g$ factorize through $M(j)$.

$(2)$ There exists a homomorphism $\tilde{h}:M(j)\to M(k)$ such that $\tilde{h}g_{k,k-i}=g_{j,j-i}h$.
\end{Lem}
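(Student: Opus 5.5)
We work in $A=R[x]/(f(x)^n)$, which is a uniserial local ring. The plan is to describe all homomorphisms between the modules $M(a)$ explicitly and then read off both factorizations. The basic fact we use is that $\Hom_A(M(a),M(b))$ is cyclic as an $A$-module. If $a\le b$, it is generated by $f_{a,b}$. If $a\ge b$, it is generated by $g_{a,b}$. Both claims follow because $M(a)=A/(f^a)$ is cyclic: a homomorphism $M(a)\to M(b)$ is determined by the image $m$ of $1$, and $m$ must satisfy $f^a m=0$ in $M(b)$. So every homomorphism is multiplication by some polynomial $p(x)$ composed with the canonical map. Moreover, the maps $f_{\ast,\ast}$ compose to $f_{\ast,\ast}$, and the maps $g_{\ast,\ast}$ compose to $g_{\ast,\ast}$.

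For (1), assume $i\le j\le k$. A map $f:M(i)\to M(k)$ has the form $p\cdot f_{i,k}$. Since $f_{i,k}=f_{i,j}f_{j,k}$ (injections compose), we get $f=f_{i,j}(p\cdot f_{j,k})$, so $f$ factors through $M(j)$. Dually, $g:M(k)\to M(i)$ has the form $p\cdot g_{k,i}$, and $g_{k,i}=g_{k,j}g_{j,i}$. Before relying on these compositions, we check them directly on the generator $1$. Under $f_{i,j}$ followed by $f_{j,k}$, the element $1$ goes to $f^{j-i}$ and then to $f^{k-j}f^{j-i}=f^{k-i}$, which is its image under $f_{i,k}$. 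The surjections clearly compose.

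For (2), write $h=p\cdot \phi$, where $\phi$ is the canonical generator of $\Hom_A(M(j-i),M(k-i))$. We want $\tilde h:M(j)\to M(k)$ sending $1$ to an element $m$ whose reduction modulo $f^{k-i}$ equals $h(1)$. The compatibility $\tilde h g_{k,k-i}=g_{j,j-i}h$ only needs to be checked on the generator $1\in M(j)$, because all the maps are $A$-linear. There are two cases.

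\emph{Case $j\le k$.} Here $h(1)=p f^{k-j}$ modulo $f^{k-i}$. Take $\tilde h=p\cdot f_{j,k}$, so $\tilde h(1)=pf^{k-j}$ in $M(k)$. This is a well-defined homomorphism, since $f^{j}\cdot pf^{k-j}=pf^k=0$ in $M(k)$. Reducing modulo $f^{k-i}$ gives exactly $h(1)$.

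\emph{Case $j>k$.} Here $h(1)=p$ modulo $f^{k-i}$. Take $\tilde h=p\cdot g_{j,k}$. This is well defined because $f^j$ annihilates $M(k)$. Reducing $\tilde h(1)=p$ modulo $f^{k-i}$ again gives $h(1)$.

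In both cases the equality on the generator gives $\tilde h g_{k,k-i}=g_{j,j-i}h$, which proves (2).

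The main obstacle is bookkeeping rather than ideas. One has to confirm the well-definedness conditions, namely that $f^j$ annihilates the chosen element of $M(k)$. One also has to respect the paper's convention that composition $fg$ means first $f$, then $g$, which fixes which side each canonical map sits on. The cyclic description of the Hom spaces makes both checks one-line computations on the generator $1$.
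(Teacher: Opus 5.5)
Your proof is correct, but it follows a different route from the paper's in (1) and in half of (2).

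The paper proves (1) by a lifting argument. It regards all the modules involved as modules over $R[x]/(f(x)^k)$. Since $M(k)$ is projective there, $g$ lifts along the epimorphism $g_{j,i}:M(j)\to M(i)$. Dually, since $R[x]/(f(x)^k)$ is self-injective, $f$ extends along $f_{i,j}$. For $j\ge k$ in (2), the paper again uses lifting: it lifts $g_{j,j-i}h$ along the epimorphism $g_{k,k-i}$, using that $M(j)$ is projective over $R[x]/(f(x)^j)$.

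Only for $j<k$ does the paper compute explicitly. It identifies $\Hom_A(M(j-i),M(k-i))$ with $R[x]/(f(x)^{j-i})$ via multiplication by $f(x)^{k-j}$. It then takes $\tilde h$ to be multiplication by a lifted multiplier times $f(x)^{k-j}$, which is exactly your case $j\le k$.

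Your cyclic description of all the Hom spaces replaces the lifting arguments with one-line checks on the generator $1$. It also produces explicit factorizations through the canonical maps, $f=f_{i,j}(p\cdot f_{j,k})$ and $g=(p\cdot g_{k,j})g_{j,i}$, and the explicit choice $\tilde h=p\cdot g_{j,k}$ when $j>k$. The paper's route is shorter and uses only self-injectivity of $R[x]/(f(x)^k)$, not the shape of the Hom spaces. The degenerate cases $j=i$ or $k=i$, where $M(0)=0$, are trivial in both approaches.
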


{\it Proof.} $(1)$ Due to $j \ge i$, we have a canonical surjective homomorphism $g_{j,i}:M(j)\to M(i)$. Due to $k\ge j$, the homomorphisms $g$ and $g_{j,i}$ can be regarded as homomorphisms of $R[x]/(f(x)^k)$-modules. Since $M(k)$ is a projective $R[x]/(f(x)^k)$-module, there exists a homomorphism $g_1:M(k)\to M(j)$ of $R[x]/(f(x)^k)$-modules such that $g_1 g_{j,i}=g$. This shows that $g$ factorizes through $M(j)$. Similarly, we show that $f$ factorizes through $M(j)$.

(2) Assume $j\ge k$. Then $g_{k,k-i}$, $g_{j,j-i}$ and $h$ can be viewed as homomorphisms of $R[x]/(f(x)^j)$-modules. Since $M(j)$ is a projective $R[x]/(f(x)^j)$-module and $g_{k,k-i}$ is an epimorphism, there exists a homomorphism $\tilde{h}:M(j)\to M(k)$ such that $\tilde{h}g_{k,k-i}=g_{j,j-i}h$.

Now we assume $j<k$. Since the map
$$\varphi:R[x]/(f(x)^{j-i})\lra \Hom_A(R[x]/(f(x)^{j-i}),R[x]/(f(x)^{k-i})), $$ $$ \alpha(x)+f(x)^{j-i}\mapsto [x^s+(f(x)^{j-i})\mapsto \alpha(x)\cdot f(x)^{k-j}\cdot x^s+(f(x)^{k-i})]$$for $ \alpha(x)\in R[x]$ and $0\le s\le u(j-i)-1,$ is an isomorphism of $A$-modules, we may assume that there exists a polynomial $\alpha(x)\in R[x]$ such that $h=(\alpha(x)+f(x)^{j-i})\varphi$. The surjective map $g_{j,j-i}$ supplies a polynomial $\tilde{g}(x)\in R[x]$ such that $(\tilde{g}(x)+f(x)^{j})g_{j,j-i}=\alpha(x)+f(x)^{j-i}$. Define $$\tilde{h}:R[x]/(f(x)^j)\lra R[x]/(f(x)^k), \; x^t+(f(x)^{j})\mapsto \tilde{g}(x)\cdot f(x)^{k-j}\cdot x^t+(f(x)^{k}),\ \text{for}\ 0\le t\le uj-1.$$ Then one can check that $\tilde{h}$ is a homomorphism of $A$-modules and $\tilde{h}g_{k,k-i}=g_{j,j-i}h$. $\square$

\medskip
Recall that two multisets $\{\{n_1,\cdots,n_s\}\}$ and $\{\{m_1,\cdots,m_t\}\}$ are equal if and only if $s=t$ and there exists a permutation $\sigma\in \Sigma_s$ such that $(m_1,\cdots,m_s)^\sigma:=(m_{(1)\sigma},\cdots,m_{(s)\sigma})=(n_1,\cdots,n_s)$ in $\mathbb{N}^s.$

\begin{Lem}\label{ecai}
Let $g(x)$ be an irreducible polynomial in $R[x]$ and $B:=R[x]/(g(x)^m)$ for some $m\in\N ^{+}$. Suppose that $I:=\{\{n_1,\cdots,n_s\}\}$
and $J:=\{\{m_1,\cdots,m_t\}\}$ are nonempty multisets with $n_i\in [n]$ for $i\in [s]$ and $m_j\in [m]$ for $j\in[t]$. Let $n'$ and $m'$ be the maximal elements of $I$ and $J$, respectively. 
Let ${}_AM:=\bigoplus_{i\in [s]}R[x]/(f(x)^{n_i})$ and ${}_BN:=\bigoplus_{j\in [t]}R[x]/(g(x)^{m_j})$. Then the following  are equivalent.

$(1)$ $\End_A(M)\simeq \End_B(N)$ as $R$-algebras.

$(2)$ $I=J$ and $R[x]/(f(x)^{n'})\simeq R[x]/(g(x)^{m'})$ as $R$-algebras.
\end{Lem}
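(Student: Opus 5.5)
The direction (2)$\Rightarrow$(1) should be essentially formal, so I would dispose of it first. Fix an $R$-algebra isomorphism $\phi: R[x]/(f(x)^{n'})\to R[x]/(g(x)^{m'})$. Both $M$ and $N$ are modules over these local algebras, since $n_i\le n'$ and $m_j\le m'$. Moreover $\End_A(M)=\End_{R[x]/(f^{n'})}(M)$, because the $A$-action factors through $A':=R[x]/(f(x)^{n'})$, and likewise for $N$ over $B':=R[x]/(g(x)^{m'})$.

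The local algebra $A'$ has the unique chain of ideals $\rad^i(A')$, and $A'/\rad^i(A')\simeq R[x]/(f(x)^i)$. The isomorphism $\phi$ preserves radical powers, so it carries $A'/\rad^{i}(A')$ to $B'/\rad^{i}(B')\simeq R[x]/(g(x)^i)$. Restriction of scalars along $\phi$ is therefore an equivalence $B'\modcat\to A'\modcat$ sending $R[x]/(g^{i})$ to $R[x]/(f^{i})$. Since $I=J$, it sends $N$ to $M$, and hence $\End_B(N)\simeq\End_A(M)$.

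For (1)$\Rightarrow$(2), set $E:=\End_A(M)$. I would first recover the multiset $I$ from $E$ together with an ordering of its primitive idempotents. Write $M=\bigoplus_{k\in[n']} M(k)^{a_k}$. The indecomposable projective $E$-modules correspond to the distinct $k$ with $a_k>0$; call the corresponding idempotents $e_k$. Then $e_kEe_l\simeq\Hom_A(M(l),M(k))$, which has $R$-dimension $u\min(k,l)$. Also $e_kEe_k\simeq R[x]/(f^k)$ is local, with residue field $F:=R[x]/(f)$, of $R$-dimension $u$.

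I would then recover the data step by step. First, $\End_{E}(\text{simple top at }e_k)=F$ recovers $u$ as $\dim_R F$. Second, $\dim_R e_kEe_k/u = k$ recovers each $k$. Third, the multiplicity $a_k$ is read off as the size of the matrix block in $E/\rad E\simeq\prod_k M_{a_k}(F)$.

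An isomorphism $E\simeq\End_B(N)$ matches the primitive idempotents together with their matrix blocks, and preserves the $R$-dimensions of the corner rings. Hence $u\cdot k$ on the $A$-side equals $\deg g\cdot k'$ on the $B$-side for matched idempotents, with equal multiplicities. To conclude $k=k'$, and not just $uk=vk'$, I need $u=v$. That follows from the residue fields being matched, since $\dim_R F=u$. Hence $I=J$.

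In particular the largest $k$, namely $n'=m'$, is matched. The corner algebra $e_{n'}Ee_{n'}\simeq R[x]/(f(x)^{n'})$ is carried isomorphically to $R[x]/(g(x)^{m'})$, because an algebra isomorphism maps $eEe$ onto $\phi(e)E'\phi(e)$. After adjusting by an inner automorphism, we may assume $\phi(e)=e'$.

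The main obstacle is the identification step: making sure the isomorphism matches idempotents in a way compatible with the labels $k$. I would handle this by using only Morita-invariant numerical data. Concretely, the $R$-dimensions of the local corner rings $e Ee$ for primitive idempotents $e$, together with the Wedderburn block sizes of $E/\rad E$, are invariants of $E$ as an $R$-algebra. Since $k\mapsto uk$ is injective and $u$ is recovered, this determines $I$ without choices.
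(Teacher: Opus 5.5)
Your proof is correct and follows essentially the same route as the paper. An algebra isomorphism matches the indecomposable projectives (primitive idempotents) of the two endomorphism algebras together with their local endomorphism rings $eEe\simeq R[x]/(f(x)^k)$, and $k$ is read off from these local rings; you use $\dim_R(eEe)/\dim_R(eEe/\rad(eEe))$ and Wedderburn block sizes where the paper uses Loewy lengths and Krull--Schmidt matching of the summands of the regular module, and your restriction-of-scalars argument for (2)$\Rightarrow$(1) spells out what the paper calls clear (calling the block sizes of $E/\rad(E)$ ``Morita-invariant'' is a misnomer, since they are only invariants of $E$ as an $R$-algebra, but that is all you use).
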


{\it Proof.} We may assume $n'=n$ and $m'=m$. Otherwise we replace $A$ and $B$ by $A'=R[x]/(f(x)^{n'})$ and $B':=R[x]/(g(x)^{m'})$, respectively, and regard $M$ as an $A'$-module and $N$ as a $B'$-module. In this case, we have $\End_A(M)=\End_{A'}(M)$ and $\End_B(N)=\End_{B'}(N)$.

The implication (2) $\Ra$ (1) is clear. We prove  (1) $\Ra$ (2). Indeed, let $\Lambda:=\End_A(M)$ and $\Gamma:=\End_B(N)$. As left modules, $_{\Lambda}\Lambda= \bigoplus_{i\in [s]}\Hom_A(M,R[x]/(f(x)^{n_i})$ and  $_{\Gamma}\Gamma= \bigoplus_{j\in [t]}\Hom_B(N,R[x]/(g(x)^{m_j})$.  The summands $_\Lambda\Hom_A(M,R[x]/(f(x)^{n_i})$ and $_\Gamma\Hom_B(N,R[x]/(g(x)^{m_j})$ are indecomposable projective modules for $i\in [s]$ and $j\in [t]$. Since $\Lambda$ and $\Gamma$ are isomorphic $R$-algebras, we have $s=t$, and there exists  $\sigma \in \Sigma_{s}$ such that $\End_\Lambda(\Hom_A(M,R[x]/(f(x)^{n_i}))\simeq \End_\Gamma(\Hom_B(N,R[x]/(g(x)^{m_{(i)\sigma}}))$ as $R$-algebras. This implies that $$R[x]/(f(x)^{n_i})\simeq \End_\Lambda(\Hom_A(M,R[x]/(f(x)^{n_i}))\simeq \End_\Gamma(\Hom_B(N,R[x]/(g(x)^{m_{(i)\sigma}}))\simeq R[x]/(g(x)^{m_{(i)\sigma}})$$ as $R$-algebras. Thus $n_i=LL(R[x]/(f(x)^{n_i}))=LL(R[x]/(g(x)^{m_{(i)\sigma}}))=m_{(i)\sigma}$ for $i\in [s]$. This implies that $I=J$ and $A\simeq B$ as $R$-algebras. $\square$

\begin{Lem}\label{exies}
Let $\Lambda$ be an Artin algebra, $\eta: 0\to X\xr{f} Y\xr{g} Z\to 0$ an exact sequence in $\Lambda\modcat$ and $i\in \mathbb{N}$. If $g$ is $\add\big(\Lambda/\rad^i(\Lambda)\big)$-epic, then $\eta$ induces two exact sequences: $$0\lra \soc^i(X)\lraf{f^\prime} \soc^i(Y)\lraf{g^\prime} \soc^i(Z)\lra 0 \; \mbox{ and } $$ $$0\lra X/\soc^i(X)\lraf{f^\wedge} Y/\soc^i(Y)\lraf{g^\wedge} Z/\soc^i(Z)\lra 0,$$ where $f^\prime$ and $g^\prime$ are the restrictions of $f$ and $g$, respectively, and where $f^\wedge$ and $g^\wedge$ are the induced homomorphisms of $f$ and $g$, respectively.
\end{Lem}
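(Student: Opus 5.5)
The plan is to identify $\soc^i$ with a Hom-functor, get the first sequence from left exactness plus the epic hypothesis, and then get the second sequence by the snake lemma. Throughout I use the standard description $\soc^i(M)=\{m\in M\mid \rad^i(\Lambda)m=0\}$, the largest submodule of $M$ annihilated by $\rad^i(\Lambda)$. Evaluation at $1+\rad^i(\Lambda)$ gives a natural isomorphism
$$\Hom_\Lambda(\Lambda/\rad^i(\Lambda),M)\simeq \soc^i(M).$$
Under this identification, $\soc^i(-)$ is the left exact functor $\Hom_\Lambda(\Lambda/\rad^i(\Lambda),-)$. Its action on $f$ and $g$ is by restriction, so $f'$ and $g'$ are well defined and $0\to\soc^i(X)\to\soc^i(Y)\to\soc^i(Z)$ is exact.

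The key step is surjectivity of $g'$, and this is the only place the hypothesis is used. Let $z\in\soc^i(Z)$ and consider the homomorphism
$$\alpha:\Lambda/\rad^i(\Lambda)\to Z,\qquad \lambda+\rad^i(\Lambda)\mapsto \lambda z.$$
Since $\Lambda/\rad^i(\Lambda)\in\add(\Lambda/\rad^i(\Lambda))$ and $g$ is $\add(\Lambda/\rad^i(\Lambda))$-epic, there is $\beta:\Lambda/\rad^i(\Lambda)\to Y$ with $\beta g=\alpha$. Put $y:=(1+\rad^i(\Lambda))\beta$. Then $\rad^i(\Lambda)y=0$, so $y\in\soc^i(Y)$, and $(y)g=z$. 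Hence the first sequence is a short exact sequence.

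For the second sequence I apply the snake lemma to a commutative diagram with exact rows. The top row is the first sequence just obtained, the bottom row is $\eta$, and the vertical maps are the inclusions $\soc^i(X)\subseteq X$, $\soc^i(Y)\subseteq Y$, $\soc^i(Z)\subseteq Z$. These commute with the horizontal maps because $f'$ and $g'$ are restrictions of $f$ and $g$. All three vertical maps are injective, so their kernels vanish. The snake lemma then yields the exact sequence
$$0=\Ker(\soc^i(Z)\hookrightarrow Z)\lra X/\soc^i(X)\lra Y/\soc^i(Y)\lra Z/\soc^i(Z)\lra 0,$$
whose maps are the induced maps $f^\wedge$ and $g^\wedge$.

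The only point requiring care is the surjectivity of $g'$, which is exactly where the approximation hypothesis on $g$ is needed. Everything else is formal.
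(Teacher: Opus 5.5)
Your proof is correct and follows essentially the paper's route: you identify $\soc^i(-)$ with $\Hom_\Lambda(\Lambda/\rad^i(\Lambda),-)$, use the epic hypothesis to get surjectivity of $g'$, and then deduce the quotient sequence from the inclusions of socles. The only differences are cosmetic: you use the snake lemma where the paper cites the $3\times 3$ lemma, and you write out the lifting explicitly with an element $z\in\soc^i(Z)$.
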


{\it Proof.} Given $M\in \Lambda\modcat$ and $i\in \mathbb{N}$, the map $\rho^i_M:\Hom_\Lambda(\Lambda/\rad^i(\Lambda),M)\simeq \soc^i(M)$, $f\mapsto (1)f$ for $f\in \Hom_\Lambda(\Lambda/\rad^i(\Lambda),M)$, is an isomorphism of $\Lambda$-modules, and any homomorphism $h:M\to M'$ of $\Lambda$-modules restricts to a homomorphism $h^\prime:\soc^i(M)\to \soc^i(M')$ of $\Lambda$-modules.
We form the following commutative diagram of $0$-sequences in $\Lambda\modcat$:
$$\xymatrix@R=0.5cm{
 0\ar[r] & \Hom_\Lambda(\Lambda/\rad^i(\Lambda),X)\ar[r]^{f^*}\ar[d]_{\rho^i_X}^{\simeq} &  \Hom_\Lambda(\Lambda/\rad^i(\Lambda),Y)\ar[r]^{g^*}\ar[d]_{\rho^i_Y}^{\simeq} &\Hom_\Lambda(\Lambda/\rad^i(\Lambda),Z)\ar[r]\ar[d]_{\rho^i_Z}^{\simeq} & 0\\
    0\ar[r] & \soc^i(X) \ar[r]^{f^\prime}\ar@{^{(}->}[d] & \soc^i(Y) \ar[r]^{g^\prime}\ar@{^{(}->}[d] &\soc^i(Z) \ar[r]\ar@{^{(}->}[d] & 0\\
    0\ar[r] & X\ar[r]_f \ar@{>>}[d] & Y\ar[r]_g\ar@{>>}[d] & Z \ar[r] \ar@{>>}[d]& 0\\
    0\ar[r] & X/\soc^i(X)\ar[r]_{f^\wedge} & Y/\soc^i(Y)\ar[r]_{g^\wedge} &Z/\soc^i(Z)\ar[r] & 0.\\
}$$
As $g$ is $\add(\Lambda/\rad^i(\Lambda))$-epic, the map $g^*$ is surjective. Thus the top row is exact, and therefore the second row is exact. Since all rows and columns are exact except the last row, it follows from $3\times 3$ Lemma that the last row is exact, too. $\square$

\smallskip
Suppose that $\{S_1, \cdots, S_n\}$ is a complete set of non-isomorphic simple $\Lambda$-modules and $P_i$ is the projective cover of $S_i$ for $i\in [n]$. For  $i, j\in [n]$, let $[P_i:S_j]$ denote the number of composition factors of $P_i$ that are isomorphic to $S_j$. Then $[P_i:S_j]= \ell_{\End_\Lambda(P_j)}\big(\Hom_\Lambda(P_j,P_i)\big)$ for $i, j\in [n]$. The matrix $C_\Lambda=([P_i:S_j])_{i,j\in[n]}\in M_n(\mathbb{N})$ is called the \emph{Cartan matrix of $\Lambda$}.
The determinant of $C_\Lambda$ is called the \emph{Cartan determinant} of $\Lambda$.

The following lemma is an analogy of \cite[Section 8]{dps}, except we remove the assumption on the ground field. For the convienence of the reader, we include here a proof.

\begin{Lem}\label{cmoe}
Let $T=\{t_1,\cdots,t_s\}$ be a subset of $[n]$ with $t_1>t_2>\cdots>t_s$, $_AM:=\bigoplus_{i=1}^sM(t_i)$ and $\Gamma:=\End_A(M)$. Then the Cartan matrix $C_\Gamma$ of $\Gamma$ is
$$\begin{pmatrix}
t_1 & t_2  & \cdots & t_{s} \\
t_2 & t_2  & \cdots & t_{s} \\
\vdots & \vdots  & \ddots & \vdots \\
t_{s} & t_{s}  & \cdots & t_{s}
\end{pmatrix}.$$
In particular, $C_\Gamma$ is a positive definite matrix and $\det(C_\Gamma)= t_s\prod_{i=1}^{s-1} (t_i - t_{i+1})$. Moreover, $\det(C_\Gamma)=1$ if and only if $M$ is an additive generator for $R[x]/(f(x)^{t_1})\modcat$.
\end{Lem}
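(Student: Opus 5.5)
The plan is to compute each Cartan entry directly from the description $[P_i:S_j]=\ell_{\End_\Gamma(P_j)}\big(\Hom_\Gamma(P_j,P_i)\big)$ recalled above.

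\textbf{Entries.} Put $P_i:=\Hom_A(M,M(t_i))$ for $i\in[s]$. These are the indecomposable projective $\Gamma$-modules, and they are pairwise non-isomorphic because the $t_i$ are distinct. By projectivization we have $\Hom_\Gamma(P_j,P_i)\simeq\Hom_A(M(t_j),M(t_i))$ and $\End_\Gamma(P_j)\simeq\End_A(M(t_j))\simeq R[x]/(f(x)^{t_j})$; the compatibility of these identifications with the composition actions will be checked by hand. Any $A$-homomorphism between cyclic modules $M(a)\to M(b)$ is multiplication by some element, so
$$\Hom_A(M(a),M(b))\simeq R[x]/(f(x)^{\min(a,b)}).$$
As a module over the local ring $R[x]/(f(x)^{a})$, this has length $\min(a,b)$. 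Hence $[P_i:S_j]=\min(t_i,t_j)$. Since $t_1>\cdots>t_s$, this gives exactly the stated matrix: entry $(i,j)$ equals $t_{\max(i,j)}$.

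\textbf{Determinant.} For $i=1,\dots,s-1$, subtract row $i+1$ from row $i$. Row $i$ then becomes $(t_i-t_{i+1},0,\dots,0)$ shifted so that the nonzero entry sits in column $i$, with zeros in all other positions. The resulting matrix is lower triangular with diagonal entries $t_1-t_2,\dots,t_{s-1}-t_s,t_s$. This yields $\det(C_\Gamma)=t_s\prod_{i=1}^{s-1}(t_i-t_{i+1})$.

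\textbf{Positive definiteness.} Write $C_\Gamma=\sum_k (t_k-t_{k+1})\,e_{\le k}e_{\le k}^{T}$, with the convention $t_{s+1}=0$, where $e_{\le k}$ is the indicator vector of $\{1,\dots,k\}$. Every coefficient is positive and the vectors $e_{\le k}$ form a basis, so $C_\Gamma$ is positive definite.

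\textbf{Determinant equal to $1$.} Each factor in the product is a positive integer. So $\det(C_\Gamma)=1$ if and only if $t_s=1$ and $t_i-t_{i+1}=1$ for all $i$. This happens exactly when $T=[t_1]$, i.e. when $M$ contains every indecomposable $R[x]/(f(x)^{t_1})$-module $M(1),\dots,M(t_1)$, i.e. when $M$ is an additive generator of $R[x]/(f(x)^{t_1})\modcat$.

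The only delicate point is the first step: making sure that composition length is measured over $\End_\Gamma(P_j)\simeq R[x]/(f(x)^{t_j})$, which is not over $R$. This is why the entries are $\min(t_i,t_j)$ rather than $u\min(t_i,t_j)$. The rest is routine.
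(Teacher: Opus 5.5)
Your proof is correct and follows the paper's route: projectivization gives $\Hom_\Gamma(P_j,P_i)\simeq\Hom_A(M(t_j),M(t_i))\simeq R[x]/(f(x)^{\min(t_i,t_j)})$, which has length $\min(t_i,t_j)$ over $\End_\Gamma(P_j)\simeq R[x]/(f(x)^{t_j})$, and you also justify the determinant formula and positive definiteness, which the paper only asserts. One small slip: after subtracting row $i+1$ from row $i$, the new row $i$ has $t_i-t_{i+1}$ in every column $1,\dots,i$, not only in column $i$; the matrix is still lower triangular with the diagonal you state, so the determinant is unaffected.
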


{\it Proof.} We may assume $t_1=n$. Otherwise we consider the algebra $A':=R[x]/(f(x)^{t_1})$ instead of $A$. Note that $M$ can be regarded as an $A'$-module and that the canonical surjective homomorphism $A\to A'$ implies $\End_{A'}(M)=\Gamma$.

The set $\{P_i:=\Hom_A(M,M(t_i))\mid i\in [s]\}$ is a complete set of non-isomorphic indecomposable projective $\Gamma$-modules. Since $\Hom_A(M,-):\add(M)\to \prj{\Gamma}$ is an equivalence of additive categories, we have $\Hom_\Gamma(P_j,P_i)\simeq \Hom_A(M(t_j),M(t_i))\simeq R[x]/(f(x)^{m_{ji}})$ as $R[x]/(f(x)^{t_j})$-modules for $i, j\in[s]$ with $m_{ji}:=\min\{t_j,t_i\}$.  Note that $R[x]/(f(x)^{m_{ji}})$-module structure on $\Hom_\Gamma(P_j,P_i)$ is induced by the $R$-algebra isomorphism $\End_{\Gamma}(P_j)\simeq R[x]/(f(x)^{t_j})$. Thus $\ell_{\End_\Gamma(P_j)}(\Hom_{\Gamma}(P_j,P_i))=\ell_{R[x]/(f(x)^{t_j})}(R[x]/(f(x)^{m_{ji}}))=m_{ji}$ for $i, j\in [s]$. Consequently, the Cartan matrix $C_\Gamma$ of $\Gamma$ is just the matrix in Lemma \ref{cmoe}.
Thus $C_\Gamma$ is positive definite and $\det(C_\Gamma)= t_s\prod_{i=1}^{s-1} (t_i - t_{i+1})$. Clearly, $\det(C_\Gamma)=1$ if and only if $t_s=1$ and $t_i - t_{i+1}=1$ for $i\in[s-1]$ if and only if $M$ is an additive generator for $R[x]/(f(x)^{t_1})\modcat$. $\square$

\medskip
We quote the following two results from \cite{lx1,lx2}.

\begin{Lem}{\rm \cite[Lemma 2.11]{lx1}}\label{lift-exact}
Given $\{a,b,c,d\}\subseteq \{0,1,\cdots,n\}$ with $b < a< c$, $b< d< c$ and $a+d=b+c$, and $X\in A\modcat$ which does not have  indecomposable direct summands $N$ with $b<\ell_A(N)<c$, let $_AY:={}_AX\oplus M(b)\oplus M(c)$. Then there is an exact sequence $0\ra M(a)\lraf{f} M(b)\oplus M(c)\lraf{g} M(d)\ra 0$ in $A\modcat$, where $f=(-g_{a,b},f_{a,c})$ is a left minimal $\add(Y)$-approximation of $M(a)$ and $g=\left(\begin{smallmatrix} f_{b,d} \\ g_{c,d} \end{smallmatrix}\right)$ is a right minimal $\add(Y)$-approximation of $M(d)$.
\end{Lem}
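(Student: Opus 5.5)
We prove Lemma~\ref{lift-exact} by direct computation in $A\modcat$, using only the maps $f_{i,j}$, $g_{j,i}$ and the factorization statements of Lemma~\ref{ftapi}.

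\textbf{Step 1: exactness.} We check that $0\ra M(a)\lraf{f} M(b)\oplus M(c)\lraf{g} M(d)\ra 0$ is exact.

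$(a)$ The composite $fg=-g_{a,b}f_{b,d}+f_{a,c}g_{c,d}$ vanishes. Both summands are the map $M(a)\to M(d)$ given by multiplication by $f(x)^{d-b}=f(x)^{c-a}$ on the canonical generator, so they cancel. Here we use $a+d=b+c$.

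$(b)$ The map $f$ is injective, because $f_{a,c}$ is injective.

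$(c)$ The map $g$ is surjective, because $g_{c,d}$ is surjective (recall $d<c$).

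$(d)$ Lengths add up. Since $\ell_A(M(i))=ui$, we have $\ell_A(M(a))+\ell_A(M(d))=u(a+d)=u(b+c)$, which is the length of the middle term.

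These four facts give exactness.

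\textbf{Step 2: $g$ is a right $\add(Y)$-approximation.} It suffices to lift every map $h:N\to M(d)$ along $g$, where $N$ is an indecomposable summand of $Y$. By hypothesis $N=M(k)$ with $k\le b$ or $k\ge c$.

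$(a)$ If $k\ge c$, then $k\ge d$, so $h$ factorizes through $M(c)$ by Lemma~\ref{ftapi}(1). More directly: $M(k)$ is projective over $R[x]/(f(x)^k)$ and $g_{c,d}$ is surjective, so $h$ lifts along $g_{c,d}$.

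$(b)$ If $k\le b$, then $k\le d$, so $h$ has image in $\soc^k(M(d))$. Every map $M(k)\to M(d)$ is $f_{k,d}$ followed by an endomorphism, i.e. $h=f_{k,d}\cdot\alpha(x)$. Since $k\le b\le d$, we have $f_{k,d}=f_{k,b}f_{b,d}$, so $h$ factors as $(f_{k,b}\alpha(x),0)\,g$.

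The dual argument shows that $f$ is a left $\add(Y)$-approximation. For $k\ge c$ we use $g_{k,a}$ and $f_{a,c}$, since maps $M(a)\to M(k)$ have image in $\soc^a$ and factor through $f_{a,c}$ as $a<c\le k$. For $k\le b$ we use that maps $M(a)\to M(k)$ factor through $g_{a,k}=g_{a,b}g_{b,k}$.

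\textbf{Step 3: minimality.} Since the sequence is exact and non-split, it suffices to see that neither $f$ nor $g$ has a summand that is a zero map. The standard criterion: a right approximation $g$ with indecomposable source summands is right minimal iff no summand of its domain lies in $\Ker g$ as a direct summand.

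$(a)$ No component of $g$ or $f$ is zero, since $b<d<c$ and $b<a<c$.

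$(b)$ Suppose $g$ were not right minimal. Then an indecomposable summand of $M(b)\oplus M(c)$ would be a summand contained in $\Ker(g)\simeq M(a)$, i.e. a direct summand isomorphic to $M(a)$. This is impossible because $a\neq b$ and $a\neq c$.

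$(c)$ Dually for $f$: a non-minimal left approximation would split off a summand of the middle term mapping isomorphically onto a summand of $\Coker f=M(d)$. This is impossible since $d\ne b,c$.

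We expect Step 3 to be the main obstacle. If the quick criterion feels insufficient, we will argue directly. Let $\alpha\in\End(M(b)\oplus M(c))$ satisfy $\alpha g=g$. Write $\alpha$ as a matrix and compare components modulo radicals. The diagonal entries must be units, because $f_{b,d}$ and $g_{c,d}$ are not annihilated by any radical multiple plus an off-diagonal correction. The off-diagonal entries lie in the radical of the endomorphism ring since $b\ne c$. Hence $\alpha$ is an isomorphism. The same matrix argument handles $f$.
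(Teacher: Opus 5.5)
Your argument is correct. The paper does not prove this statement itself: it quotes it from \cite[Lemma 2.11]{lx1} and uses it as a black box in Corollary~\ref{esam}. So there is no in-paper proof to compare with, and your direct computation with the maps $f_{i,j}$ and $g_{j,i}$ is a valid self-contained proof.

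Steps 1 and 2 are fine. The only slip there is $\ell_A(M(i))=ui$: in fact $\ell_A(M(i))=i$, and $ui=\dim_R M(i)$. The dimension count in Step 1 works with $\dim_R$, and Step 2 already uses the correct reading of the hypothesis on $X$ (summands $M(k)$ with $k\le b$ or $k\ge c$).

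Step 3 needs two repairs.

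\textbf{Drop 3(a).} ``No component is zero'' is not a minimality criterion. For example, the map $M\oplus M\to M$, $(m,m')\mapsto m+m'$, has no zero component, yet it kills the antidiagonal summand. You also assert non-splitness at the start of Step 3 without proof; the argument below supplies it.

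\textbf{Justify the ``i.e.'' in 3(b).} A reader will object here, because $M(b)$ \emph{does} embed into $\Ker g\simeq M(a)$, since $b<a$. The step is saved by the modular law. Write $E=M(b)\oplus M(c)=E'\oplus E''$ with $0\neq E''\subseteq K:=\Ker g$. Then $K=(K\cap E')\oplus E''$. Since $K\simeq M(a)$ is indecomposable, $E''=K$, so $f$ is a split monomorphism. Then $M(a)$ is a direct summand of $M(b)\oplus M(c)$, which Krull--Schmidt forbids because $a\notin\{b,c\}$.

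\textbf{3(c) works dually.} Suppose $\operatorname{Im} f\subseteq E'$ for a decomposition $E=E'\oplus E''$ with $E''\neq 0$. Then $\Coker f\simeq E'/\operatorname{Im}f\oplus E''$. Since $\Coker f\simeq M(d)$ is indecomposable, $E'=\operatorname{Im}f$, and you get the same contradiction.

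With this in place, the vaguely stated fallback matrix argument at the end is unnecessary.
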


\begin{Lem} {\rm \cite[Lemma 2.10]{lx2}}\label{St-i} Let $g(x)$ be an irreducible polynomial in $R[x]$ and $B:=R[x]/(g(x)^m)$ for $m\ge 2$.
Then the following are equivalent:

$(1)$ $\stmc{A}\simeq \stmc{B}$ as triangulated $R$-categories.

$(2)$ $\stmc{A}\simeq \stmc{B}$ as $R$-categories.

$(3)$ $n=m$ and $A/\rad(A)\simeq B/\rad(B)$ as $R$-algebras.

$(4)$ $A\simeq B$ as $R$-algebras.
\end{Lem}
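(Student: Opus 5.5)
The implications $(4)\Rightarrow(1)$ and $(1)\Rightarrow(2)$ are formal: an $R$-algebra isomorphism $A\simeq B$ induces an exact equivalence $A\modcat\simeq B\modcat$ preserving projectives, hence a triangle equivalence of stable categories. It remains to prove $(2)\Rightarrow(3)$ and $(3)\Rightarrow(4)$.

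For $(2)\Rightarrow(3)$ I would read off the invariants from $\stmc{A}$ using the modules $M(i)$ and the maps $f_{i,j}$, $g_{j,i}$ of Subsection \ref{sect2.3}.
\begin{itemize}
\item The indecomposable objects of $\stmc{A}$ are exactly $M(1),\dots,M(n-1)$, and they are pairwise non-isomorphic, since $M(n)=A$ is the only indecomposable projective. An $R$-equivalence preserves indecomposability and isomorphism classes, so it gives $n-1=m-1$, that is, $n=m$. Here $m\ge 2$ ensures the categories are nonzero; if $n=1$, then $\stmc{A}=0$, which cannot be equivalent to $\stmc{B}\neq 0$.
\item Next I compute the stable endomorphism rings. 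We have $\End_A(M(i))\simeq R[x]/(f^i)$. An endomorphism factors through $\add(A)$ exactly when it factors through $f_{i,n}\colon M(i)\to M(n)$, since this map is a left $\add(A)$-approximation. The endomorphisms so obtained are multiplications by elements of $(f^{\,n-i})$. Hence
$$\StEnd_A(M(i))\simeq R[x]/(f^{\min\{i,\,n-i\}}).$$
\item For $i=1$ this gives $\StEnd_A(M(1))\simeq R[x]/(f)=A/\rad(A)$. The equivalence sends $M(1)$ to some $N(j)$ with $\StEnd_B(N(j))\simeq R[x]/(g^{\min\{j,m-j\}})$. Since a division ring is a local ring with zero radical, we need $\min\{j,m-j\}=1$. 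In either case $A/\rad(A)\simeq B/\rad(B)$ as $R$-algebras, which gives $(3)$.
\end{itemize}

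For $(3)\Rightarrow(4)$, put $K:=R[x]/(f)\simeq R[x]/(g)$ and let $\theta\in K$ be a root of $g$. The plan is to build an $R$-algebra map $R[x]/(g^n)\to A$ and show it is an isomorphism.
\begin{itemize}
\item Choose any lift $y\in A$ of $\theta$. Then $g(y)\in\rad(A)$, so $g(y)^n=0$, and $x\mapsto y$ defines an $R$-algebra map $\phi\colon R[x]/(g^n)\to A$.
\item Both sides have $R$-dimension $n\deg g=n\deg f$.
\item $\phi$ is surjective exactly when $g(y)$ generates $\rad(A)$, i.e. $g(y)\notin\rad^2(A)$. Indeed, $\phi$ is then onto modulo $\rad(A)$ and hits a generator of the uniserial radical. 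By the dimension count it is then an isomorphism.
\item So everything reduces to choosing the lift $y$ so that $g(y)\notin\rad^2(A)$.
\end{itemize}

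The main obstacle is this choice of lift. Replace $y$ by $y+\varepsilon$ with $\varepsilon\in\rad(A)\setminus\rad^2(A)$. Then
$$g(y+\varepsilon)\equiv g(y)+g'(y)\varepsilon \pmod{\rad^2(A)}.$$
If $g'(\theta)\neq 0$, i.e. $g$ is separable, then one of $y$ or $y+\varepsilon$ works, since $g'(y)\varepsilon\notin\rad^2(A)$.

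In the inseparable case, $g'(\theta)=0$, so $g(x)=h(x^{p^e})$ and this argument breaks down. There I would first try to reduce to the separable part. Let $K_0\subseteq K$ be the separable closure of $R$, so that $K/K_0$ is purely inseparable. The separable case gives a coefficient field $K_0\hookrightarrow A$. Then I would compare $A$ and $B$ as $K_0$-algebras generated by a lift of a purely inseparable generator, choosing the lift so that its minimal-polynomial value lies outside $\rad^2$. If this direct construction fails, the fallback is to use more of the stable category: the rings $\StEnd(M(i))\simeq R[x]/(f^{\min\{i,n-i\}})$ together with the stable Hom-spaces between the $M(i)$. The aim there would be to recover $R[x]/(f^{k})$ for larger $k$ and argue inductively on $n$.
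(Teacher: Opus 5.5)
Your arguments for $(4)\Rightarrow(1)\Rightarrow(2)\Rightarrow(3)$ are correct. So is $(3)\Rightarrow(4)$ when $f$ (equivalently $g$) is separable. The paper imports this lemma from \cite{lx2} without proof, so I judge your proposal on its own.

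The genuine gap is $(3)\Rightarrow(4)$ for inseparable $f$, which you leave only as a plan. The obstruction there is not a bad choice of lift. Since $g'=0$ as a polynomial, $g(y+\varepsilon)-g(y)\in\varepsilon^2A\subseteq\rad^2(A)$ for every $\varepsilon\in\rad(A)$. So the class of $g(y)$ in $\rad(A)/\rad^2(A)$ depends only on the root $\theta$, and you must \emph{prove} that this class is nonzero; ``choosing the lift so that its minimal-polynomial value lies outside $\rad^2$'' is not an available move. Your fallback through the stable category cannot help either. For $n=m=2$, $\stmc{A}$ is $\add(M(1))$ with $\StEnd_A(M(1))\simeq K$, so it remembers nothing beyond $K$. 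In that case the lemma is exactly the algebraic claim that $R[x]/(f)\simeq R[x]/(g)$ forces $R[x]/(f^2)\simeq R[x]/(g^2)$.

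To close the gap, carry out your reduction to the separable closure and then compute.
\begin{itemize}
\item Write $f=f_1(x^{p^e})$ with $f_1$ separable irreducible and $e\ge 1$. Then $A\simeq C[x]/(x^{p^e}-z)$ with $C:=R[z]/(f_1(z)^n)$.
\item The separable case (Hensel-lift a root of $f_1$) gives $C\simeq L[\epsilon]/(\epsilon^n)$ with $z\mapsto a+\epsilon$. Here $L=R(a)$ is the separable closure of $R$ in $K$, $a=\xi^{p^e}$, and $\xi$ is the residue of $x$. Hence $A\simeq L[x]/((x^{p^e}-a)^n)$ with $a\notin L^p$.
\item Do the same for $B$ and transport along an $R$-isomorphism $R[x]/(g)\to K$, which maps separable closure onto separable closure. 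This gives $B\simeq L[x]/((x^{p^e}-b)^n)$, where $b=\theta^{p^e}$ for some $\theta\in K$ with $L(\theta)=K$.
\item Write $\theta=\sum_{i<p^e}c_i\xi^i$ with $c_i\in L$, and set $y:=\sum_ic_ix^i\in A$ and $w:=x^{p^e}-a$. Since Frobenius is additive,
\[
y^{p^e}-b=\sum_ic_i^{p^e}\bigl((a+w)^i-a^i\bigr)\equiv D\,w \pmod{w^2},\qquad D=\sum_{p\nmid i}i\,c_i^{p^e}a^{i-1}\in L .
\]
\item $D\neq 0$, for three reasons. The coefficients $i\,c_i^{p^e}$ lie in $L^{p^e}$. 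The powers $1,a,\dots,a^{p^e-1}$ are linearly independent over $L^{p^e}$ because $a\notin L^p$. And $\theta\notin L(\xi^p)$ (as $L(\theta)=K$), so $c_i\neq 0$ for some $p\nmid i$.
\end{itemize}
Thus $y^{p^e}-b$ generates $\rad(A)$, and your surjectivity-plus-dimension argument, now run over $L$, shows $B\simeq A$.
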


From Lemma \ref{lift-exact}, we have the following.
\begin{Koro}\label{esam}
Let $a,b\in \{0,1,\cdots,n\}$ with $a<b$, and let $M:=\bigoplus_{i=a}^b M(i)$, $N:=M(a)\oplus M(b)$ and $K:= \bigoplus_{i=0}^aM(i)\oplus\bigoplus_{j=b}^{n}M(j)$ be $A$-modules. Then

$(1)$ For any $X\in \add(M)$, there exist two exact sequences in $A\modcat$:
$$0 \lra X \lraf{f} Y \lraf{g} Z \lra 0 \; \text{ and } \; 0\lra Z'\lraf{f'} Y'\lraf{g'}X \lra 0$$ with $Y, Y'\in \add(N)$ and $Z, Z'\in \add(M)$, such that $f$ and $f'$ are left minimal $\add(K)$-approximations of $X$ and $Z'$, respectively, and that $g$ and $g'$ are right minimal $\add(K)$-approximations of $Z$ and $Z'$, respectively.

$(2)$ For any homomorphism $h:X\to Y$ in $A\modcat$, then

$\quad (i)$ If $Y\in \add(M)$, then $h$ is $\add(K)$-epic if and only if $h$ is $\add(N)$-epic.

$\quad (ii)$ If $X\in \add(M)$, then $h$ is $\add(K)$-monic if and only if $h$ is $\add(N)$-monic.
\end{Koro}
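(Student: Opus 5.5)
The plan is to reduce everything to indecomposable summands and then use Lemma \ref{lift-exact} and Lemma \ref{ftapi}(1).

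\emph{Part (1).} Since $A$ is Nakayama, write $X\simeq\bigoplus_k M(i_k)$ with $a\le i_k\le b$. Direct sums of left (right) minimal $\add(K)$-approximations are again left (right) minimal $\add(K)$-approximations, and exact sequences add. So it suffices to treat $X=M(i)$ with $a\le i\le b$.

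If $i\in\{a,b\}$, then $M(i)\in\add(K)\cap\add(N)$. Here I take the trivial sequences $0\to M(i)\xrightarrow{1}M(i)\to 0\to 0$ and $0\to 0\to M(i)\xrightarrow{1}M(i)\to 0$. The identity is a minimal approximation, and the zero maps are trivially minimal approximations of $0$.

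If $a<i<b$, I apply Lemma \ref{lift-exact} with $(a,b,c,d)$ replaced by $(i,a,b,a+b-i)$. The conditions $a<i<b$, $a<a+b-i<b$ and $i+(a+b-i)=a+b$ hold. I take the lemma's ``$X$'' to be $K$, which has no indecomposable summand $M(l)$ with $a<l<b$. The lemma's $Y$ is then $K\oplus M(a)\oplus M(b)$, and $\add(Y)=\add(K)$. This gives an exact sequence
$$0\to M(i)\xrightarrow{f}M(a)\oplus M(b)\xrightarrow{g}M(a+b-i)\to 0,$$
where $f$ is a left minimal $\add(K)$-approximation and $g$ is a right minimal $\add(K)$-approximation. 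Moreover $a<a+b-i<b$, so the end term lies in $\add(M)$.

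For the second sequence, where $M(i)$ is the right-hand term, I apply the same lemma with $(a+b-i,a,b,i)$. This yields $0\to M(a+b-i)\to M(a)\oplus M(b)\to M(i)\to 0$ with the required minimal approximation properties. Summing these sequences over the summands of $X$ gives both sequences in (1), with middle terms in $\add(N)$ and outer terms in $\add(M)$.

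\emph{Part (2)(i).} Since $\add(N)\subseteq\add(K)$, every $\add(K)$-epic map is $\add(N)$-epic. For the converse, suppose $h:X\to Y$ is $\add(N)$-epic with $Y\in\add(M)$. It suffices to lift each map $u:M(j)\to M(l)$ along $h$, where $M(l)$ is a summand of $Y$ (so $a\le l\le b$) and $j\le a$ or $j\ge b$.

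If $j\ge b$, then $l\le b\le j$. By Lemma \ref{ftapi}(1) applied with $(i,j,k)=(l,b,j)$, the map $u$ factors as $M(j)\to M(b)\to M(l)$. If $j\le a$, then $j\le a\le l$. By Lemma \ref{ftapi}(1) with $(i,j,k)=(j,a,l)$, the map $u$ factors through $M(a)$. In both cases the map out of $M(a)$ or $M(b)$ lifts along $h$ because $h$ is $\add(N)$-epic, and composing gives a lift of $u$.

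\emph{Part (2)(ii).} This is the dual argument. For $X\in\add(M)$, any map from a summand $M(l)$ of $X$ to $M(j)$ with $j\le a$ or $j\ge b$ factors through $M(a)$ or $M(b)$, again by Lemma \ref{ftapi}(1).

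\emph{Main obstacle.} The only step needing care is choosing the parameters in Lemma \ref{lift-exact} so that $\add(X\oplus M(b)\oplus M(c))$ coincides with $\add(K)$, and checking its hypotheses $b<a<c$ and $b<d<c$. The rest is bookkeeping with direct sums.
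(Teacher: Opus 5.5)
Your proposal is correct. Part (1) follows the paper's proof exactly: reduce to $X=M(i)$, take trivial sequences for $i\in\{a,b\}$, and otherwise apply Lemma \ref{lift-exact}. You also make explicit two choices the paper leaves implicit: the substitutions $(i,a,b,a+b-i)$ and $(a+b-i,a,b,i)$, and taking $K$ itself as the auxiliary module, so that $\add(K\oplus M(a)\oplus M(b))=\add(K)$.

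Part (2) is where you take a different route. The paper derives (2)(i) from (1). The second sequence provides a right $\add(K)$-approximation $K''\to Y$ with $K''\in\add(N)$. This lifts along the $\add(N)$-epic map $h$, and every map from $\add(K)$ to $Y$ factors through it; (ii) is simply called dual. You instead prove directly from Lemma \ref{ftapi}(1) that every map between an indecomposable of $\add(K)$ and one of $\add(M)$, in either direction, factors through $M(a)$ or $M(b)$. You then lift or extend componentwise; the cases involving $M(0)=0$, which lie outside the range of Lemma \ref{ftapi}, are trivial.

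What each route buys:
\begin{itemize}
\item Your argument makes (2) independent of (1) and of Lemma \ref{lift-exact}, and it treats (ii) explicitly.
\item The paper's argument is shorter once (1) is available. It also presents (2) as a formal consequence of having $\add(K)$-approximations with middle terms in $\add(N)$, which is how (1) and (2) are used together in Lemma \ref{tcf}.
\end{itemize}

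One small inaccuracy, which the paper's proof shares. For $i\in\{a,b\}$ with $M(i)\neq 0$, the maps $M(i)\to 0$ and $0\to M(i)$ are $\add(K)$-approximations of $0$, but they are not minimal, since the zero endomorphism of $M(i)$ fixes them. No other choice can repair this. A left (or right) minimal $\add(K)$-approximation of an object already in $\add(K)$ must be an isomorphism, which forces the third term to be $0$. So the minimality clauses in (1) can only hold for summands with $a<i<b$. Only the approximation property is used afterwards, so nothing downstream is affected.
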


{\it Proof.} (1) Indeed, it is enough to assume that $X$ is indecomposable in $\add(M)$, that is, $X=M(i)$ for some $a\le i\le b$. If $i=a$ or $b$, then the trivial exact sequences $0 \to X \xr{1_X} X \to 0 \to 0 $ satisfy the required conditions.

Assume that $i\ne a$ and $i\ne b$. Then $a<i<b, a<a+b-i<b$ and $a+b=i+(a+b-i)$. It follows from Lemma \ref{lift-exact} that there exist an exact sequence in $A\modcat$: $0\lra M(i) \lraf{f} M(a)\oplus M(b)\lraf{g} M(a+b-i)\lra 0$ such that $f$ and $g$ are left and right minimal $\add(K)$-approximations, respectively. 
Similarly, we have an exact sequence $0\lra Z'\lraf{f'} Y'\lraf{g'}X \lra 0$ satisfying the required conditions. This shows (1).

(2) Since (ii) is dual to (i), we only prove (i). Suppose $h$ is $\add(K)$-epic. Since $N\in \add(K)$, it follows that $h$ is $\add(N)$-epic. Conversely, we show that if $h$ is $\add(N)$-epic, then $h$ is $\add(K)$-epic.

Let $h':K'\to Y$ be a homomorphism in $A\modcat$ with $K'\in\add(K)$. Since $Y\in\add(M)$, there is a right $\add(K)$-approximation $h'':K''\to Y$ of $Y$ with $K''\in\add(N)$ by (1). Since $h$ is $\add(N)$-epic, there is a homomorphism $\phi:K''\to X$ such that $\phi h=h''$. Since $h''$ is a right $\add(K)$-approximation and $K'\in\add(K)$, there exists a homomorphism $\psi:K'\to K''$ such that $\psi h''=h'$. Hence $\psi \phi h=\psi h''=h'$, and therefore $h$ is $\add(K)$-epic. $\square$

\subsection{Centralizer algebras of matrices}
Centralizer algebras of matrices can realize any finite-dimensional algebras over fields. So it is worthy to investigate finite-dimensional algebras from the view point of the entralizer algebras of matrices. Let us recall some definitions and basic results on centralizer algebras of matrices.

Given a field $R$ and a natural number $n\in \mathbb{Z}_{>0}$, let $M_n(R)$ be the full $n\times n$ matrix algebra over $R$ with the identity matrix $I_n$. Let $e_{ij}$, $1\le i,j\le n$, be the matrix units, and $J_n(\lambda)\in M_n(R)$ be the Jordan block matrix with the eigenvalue $\lambda\in R$.

For a nonempty subset $X$ of $M_n(R)$, the \emph{centralizer algebra} $S_n(X,R)$ of $X$ in $M_n(R)$ is defined by $$S_n(X,R):=\{a\in M_n(R)\mid ax=xa,\; \forall \; x\in X\}.$$

If $X$ is a finite set, Brenner reduced the study of $S_n(X,R)$ to the case that $X$ consists of only two elements \cite[Lemma 1]{brenner1972}. Furthermore, Brenner showed in \cite[Lemma 2]{brenner1972} that every finite-dimensional algebra over a field is isomorphic to the centralizer algebra of \textbf{two} matrices. Thus it is natural and fundamental to study first the centralizer algebra of a single matrix. This might be an important step to understanding arbitrary finite-dimensional algebras and related topics. For simplicity, we write $S_n(c,R)$ for $S_n(\{c\},R)$.
By a \emph{centralizer matrix algebra} we always mean an algebra of the form $S_n(c,R)$.

We have the following property of centralizer matrix algebras.
\begin{Lem}{\rm \cite[Lemma 3.2]{lx1}}\label{cmat}
For $c\in M_n(R)$, there are the isomorphisms of $R$-algebras: $$S_n(c,R)\simeq S_n(c^{tr},R)\simeq S_n(c,R)^{\opp}\simeq \End_{R[c]}(R^n),$$ where $R[c]$ is the subalgebra of $M_n(R)$ generated by $c$, and where $c^{tr}$ is the transpose of $c$.
\end{Lem}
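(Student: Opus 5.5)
The plan is to prove the three isomorphisms separately, using only elementary linear algebra, and then to compose them.

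\emph{Step 1: $S_n(c,R)\simeq S_n(c^{tr},R)$.} I would use the classical fact that every square matrix over a field is similar to its transpose. The characteristic matrices $xI_n-c$ and $xI_n-c^{tr}=(xI_n-c)^{tr}$ over the principal ideal domain $R[x]$ have the same Smith normal form, because transposition preserves determinantal divisors: the $k\times k$ minors of a matrix and of its transpose coincide. Hence $c$ and $c^{tr}$ have the same invariant factors, so by the rational canonical form there is $P\in GL_n(R)$ with $c^{tr}=PcP^{-1}$. The map $x\mapsto PxP^{-1}$ is an automorphism of $M_n(R)$, and it carries the centralizer of $c$ onto the centralizer of $PcP^{-1}=c^{tr}$. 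This gives the first isomorphism of $R$-algebras.

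\emph{Step 2: $S_n(c^{tr},R)\simeq S_n(c,R)^{\opp}$.} Transposition $x\mapsto x^{tr}$ is an $R$-linear anti-automorphism of $M_n(R)$. If $cx=xc$, then $x^{tr}c^{tr}=c^{tr}x^{tr}$, so transposition restricts to a bijective anti-homomorphism $S_n(c,R)\to S_n(c^{tr},R)$. That is, it gives an algebra isomorphism $S_n(c,R)^{\opp}\simeq S_n(c^{tr},R)$.

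\emph{Step 3: $S_n(c,R)\simeq \End_{R[c]}(R^n)$.} I would regard $R^n$ as a left $R[c]$-module by the action of $c$ on vectors. An $R[c]$-endomorphism is exactly an $R$-linear map of $R^n$ commuting with the action of $c$, hence a matrix $x$ with $cx=xc$. The only delicate point is the composition convention. In this paper homomorphisms are composed left to right ($fg$ means first $f$, then $g$), so maps are effectively written on the right. If $R^n$ is taken as column vectors with $c$ acting on the left, the identification $x\mapsto(v\mapsto xv)$ turns the product $fg$ into $x_gx_f$, and one gets $\End_{R[c]}(R^n)\simeq S_n(c,R)^{\opp}$. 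Alternatively, take $R^n$ as row vectors, with $c$ acting by $v\mapsto vc^{tr}$ (or $vc$), and endomorphisms $v\mapsto vx$; then composition matches matrix multiplication, and the endomorphism ring is the centralizer of $c^{tr}$ (or $c$). In either case, Steps 1 and 2 identify all of these algebras with one another, so the convention is irrelevant to the final chain of isomorphisms.

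The only non-formal ingredient is Step 1, the similarity of $c$ and $c^{tr}$. Since this is a standard consequence of the theory of invariant factors over $R[x]$, I would cite it rather than reprove it. Everything else is bookkeeping, taking care to keep the left/right composition conventions consistent.
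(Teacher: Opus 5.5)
Your proof is correct. The paper gives no proof of its own here and simply quotes the result from \cite[Lemma 3.2]{lx1}. Your argument is the standard one:
\begin{itemize}
\item $c$ is similar to $c^{tr}$ because $xI_n-c$ and its transpose have the same determinantal divisors, so conjugation carries $S_n(c,R)$ onto $S_n(c^{tr},R)$;
\item transposition is an anti-isomorphism $S_n(c,R)\to S_n(c^{tr},R)$;
\item $R[c]$-endomorphisms of $R^n$ are exactly the matrices commuting with $c$.
\end{itemize}
Your remark on the paper's left-to-right composition convention is also accurate. With column vectors one naturally gets $\End_{R[c]}(R^n)\simeq S_n(c,R)^{\opp}$, and this is harmless because your first two steps give $S_n(c,R)\simeq S_n(c,R)^{\opp}$.
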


For a matric $c\in M_n(R)$, we denote by
$m_c(x)$ and $\chi_c(x)$ the minimal and characteristic polynomials of a matric $c$ over $R$, respectively.

Next, we point out a connection of centralizer matrix algebras with the algebras $\Gamma$ in Lemma \ref{cmoe}.

Given a monic polynomial $g(x)=x^n+a_{n-1}x^{n-1}+\cdots+a_1x+a_0\in R[x]$. The \emph{companion matrix of $g(x)$} is defined by
$$C[g(x)]:=\begin{pmatrix}
0 &  \cdots & 0 & -a_0 \\
1 &  \cdots & 0 & -a_1 \\
\vdots  & \ddots & \vdots & \vdots \\
0 &  \cdots & 1 & -a_{n-1}
\end{pmatrix}\in M_{n}(R).$$
Then $m_{C[g(x)]}(x)=\chi_{C[g(x)]}(x)=g(x)$.

\begin{Lem}\label{eogoqpicma}
Let $T=\{t_1,\cdots,t_s\}$ be a subset of $[n]$ with $t_1>t_2>\cdots>t_s$, $m:=\sum_{i=1}^sut_i$, $M:=\bigoplus_{i=1}^sM(t_i)\in A\modcat$ and $\Gamma:=\End_A(M)$. Then there is a matrix $c\in M_m(R)$ such that $\Gamma \simeq S_m(c,R)$ as $R$-algebras.
\end{Lem}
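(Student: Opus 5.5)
We take $c$ to be a block diagonal matrix of companion matrices,
$$c:=\mathrm{diag}\big(C[f(x)^{t_1}],\,C[f(x)^{t_2}],\,\cdots,\,C[f(x)^{t_s}]\big)\in M_m(R),$$
which has size $\sum_{i=1}^s \deg(f(x)^{t_i})=\sum_{i=1}^s ut_i=m$. The proof then identifies $S_m(c,R)$ with $\End_A(M)$ through Lemma \ref{cmat}.

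\emph{Step 1.} We show that the $R[x]$-module structure on $R^m$ given by letting $x$ act as $c$ is isomorphic to $M$. For a monic polynomial $g(x)$ of degree $d$, the companion matrix $C[g(x)]$ is the matrix of multiplication by $x$ on $R[x]/(g(x))$ in the basis $1,x,\dots,x^{d-1}$; acting on the appropriate side (rows or columns, matching the paper's convention), this gives $R^d\simeq R[x]/(g(x))$ as $R[x]$-modules. Taking $g(x)=f(x)^{t_i}$ and summing over the blocks gives $R^m\simeq\bigoplus_{i}R[x]/(f(x)^{t_i})=M$ as $R[x]$-modules. Since $t_i\le t_1\le n$, the element $f(x)^n$ annihilates this module, so $R^m$ is an $A$-module and $R^m\simeq M$ as $A$-modules.

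\emph{Step 2.} We identify the module endomorphisms. The subalgebra $R[c]\subseteq M_m(R)$ is the image of $R[x]$ under $x\mapsto c$, so an $R$-linear endomorphism of $R^m$ is $R[c]$-linear exactly when it is $R[x]$-linear, and exactly when it is $A$-linear. Hence $\End_{R[c]}(R^m)=\End_{R[x]}(R^m)=\End_A(R^m)\simeq\End_A(M)=\Gamma$.

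\emph{Step 3.} We conclude with Lemma \ref{cmat}, which gives $S_m(c,R)\simeq\End_{R[c]}(R^m)$. Combined with Step 2, this yields $\Gamma\simeq S_m(c,R)$.

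The only point that needs care is the side convention, since matrices may act on row vectors or on column vectors. If it causes a mismatch, we replace $c$ by $c^{tr}$, which Lemma \ref{cmat} allows because $S_m(c,R)\simeq S_m(c^{tr},R)$. The companion matrices can be replaced by their transposes in the same way. Beyond this, the argument is routine.
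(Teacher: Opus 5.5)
Your proposal is correct and is essentially the paper's proof: you use the same block-diagonal matrix of companion matrices and the same appeal to Lemma \ref{cmat}, and your remark that $R[c]$-, $R[x]$- and $A$-linearity coincide on $R^m$ replaces the paper's reduction to $t_1=n$. The one detail you skip is that $f(x)$ is not assumed monic, so $C[f(x)^{t_i}]$ should be read as $C[(a_u^{-1}f(x))^{t_i}]$, where $a_u$ is the leading coefficient of $f(x)$; this generates the same ideal $(f(x)^{t_i})$, so nothing else in your argument changes, and the paper makes exactly this normalization.
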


{\it Proof.} We write $f(x)=a_ux^u+\cdots+a_1x+a_0\in R[x]$ and consider $A':=R[x]/((a_u^{-1}f(x))^{t_1})$. Then there is a surjective homomorphism $\pi: A\to A'$ of $R$-algebras with $\Ker(\pi)=(f(x)^{t_1})/(f(x)^n)$, and $_AM$ can be regarded as an $A'$-module. Thus  $\End_{A'}(M)=\End_A(M)$. Hence we may assume $t_1=n$ and $a_u=1$.

Now, we consider the diagonal block matrix $c:=C[f(x)^{t_1}]\oplus \cdots\oplus C[f(x)^{t_s}]\in M_m(R)$. Then $m_c(x)=f(x)^{t_1}$, $R[c]\simeq R[x]/(m_c(x))\simeq R[x]/(f(x)^{t_1})= A$ as $R$-algebras, and $R^m\simeq \bigoplus_{i=1}^sR[x]/(f(x)^{t_i})$ as $R[c]$-modules (see \cite[Chapter 4]{c1}). Thus $R^m\simeq M$ as $R[c]$-modules. It follows from Lemma \ref{cmat} that $\Gamma=\End_A(M)\simeq \End_{R[c]}(R^m)\simeq S_m(c,R)$ as $R$-algebras. $\square$

\subsection{Basics on number theory\label{number}}
In this subsection we assume that $p=0$ or $p>0$ is a prime number. Let us recall a few results on number theory for our later proofs.

\begin{Lem}\label{slopop}
Suppose $p>0$ be a prime number and $i,j,k,\ell\in\mathbb{N}$ with $j>k$ and $\ell>i$. Then

$(1)$  $p^j-p^k=p^\ell-p^i$ if and only if $j=\ell$ and $k=i$.

$(2)$  $p^i-1=p^k$ if and only if $p=2, i=1$ and $k=0$. 

$(3)$  $p^{j}-p^{k}=p^i$ if and only if $p=2$ and $i=k=j-1$. In particular, $p^{j}-p^{k}=1$ if and only if $p=2, j = 1$, and $k = 0$.
\end{Lem}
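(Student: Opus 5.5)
The argument is elementary divisibility by powers of $p$. In each part, the direction ``if'' is a direct check, so the work is the ``only if'' direction. Throughout I use that $p^a$ with $a\ge 1$ is divisible by $p$, while $p^0=1$ is not.

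For (1), assume $p^j-p^k=p^\ell-p^i$ with $j>k$ and $\ell>i$. I would factor both sides as $p^k(p^{j-k}-1)=p^i(p^{\ell-i}-1)$. Since $j-k\ge 1$ and $\ell-i\ge 1$, neither $p^{j-k}-1$ nor $p^{\ell-i}-1$ is divisible by $p$. Comparing the exact power of $p$ on each side gives $k=i$. Cancelling $p^k$ then gives $p^{j-k}=p^{\ell-i}$, so $j=\ell$.

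For (2), assume $p^i-1=p^k$. First, $i\ge 1$, since otherwise the left side is $0$. Then $p\nmid p^i-1$, so $p\nmid p^k$, which forces $k=0$. The equation becomes $p^i=2$, and this gives $p=2$ and $i=1$.

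For (3), assume $p^j-p^k=p^i$ with $j>k$. Writing the left side as $p^k(p^{j-k}-1)$, where the second factor is prime to $p$, the $p$-adic valuation gives $i=k$. Cancelling $p^k$ leaves $p^{j-k}-1=1$, so $p^{j-k}=2$. Hence $p=2$ and $j-k=1$, that is, $i=k=j-1$. The ``in particular'' statement is the case $i=0$: then $k=0$ and $j=1$.

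The only point needing care is identifying the exact power of $p$ in $p^a-1$ for $a\ge1$. This is immediate, since $p^a-1\equiv -1 \pmod p$.
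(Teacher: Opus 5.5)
Your proof is correct. Factoring out the smaller power of $p$ and using $p^a-1\equiv -1 \pmod p$ (so $p^a-1\ge 1$ and is prime to $p$ for $a\ge 1$) determines the exact power of $p$ on each side, and this settles all three parts, including the special case $i=0$ of (3). The paper states this lemma without proof, as a standard elementary fact, so your valuation argument supplies the routine verification it leaves to the reader.
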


Recall that, for a prime $p>0$ and $n\in \mathbb{Z}_{>0}$, we denote by $\nu_p(n)$ the maximal power index $m$ such that $p^m\mid n$. For convenience, we define $\nu_p(n)=0$ for $p=0$ and all $n\in \mathbb{Z}_{>0}$.

\begin{Lem} \label{no-2}

$(1)$ For $p\ge 0$ and $T:=\{ n_1, n_2, \cdots, n_t\}\subset\mathbb{Z}_{>0}$ with $t\ge 2$ and $\nu_p(n_i)=0$ for all $i\in[t]$. If $n_t\nmid n_j$ for all $j\in [t-1]$, then there is an irreducible factor $f(x)$ of $x^{n_t}-1$ such that $f(x)\nmid x^{n_j}-1$ for all $j\in[t-1]$.

$(2)$ For $n, m\in \mathbb{Z}_{>0}$, $n\mid m$ in $\mathbb{N}$ if and only if $x^n-1 \mid x^m-1$ in $R[x]$.
\end{Lem}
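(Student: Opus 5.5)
Part (2) is the standard fact that $x^n-1\mid x^m-1$ iff $n\mid m$, and part (1) follows from the structure of cyclotomic factorizations in characteristic $0$ or characteristic $p\nmid n_i$. I treat them in that order.

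\emph{Part (2).} If $m=nq$, then $x^m-1=(x^n-1)(x^{n(q-1)}+\cdots+x^n+1)$. Conversely, write $m=qn+r$ with $0\le r<n$. Then
$$x^m-1=x^r(x^{qn}-1)+(x^r-1).$$
Since $x^n-1$ divides $x^{qn}-1$, it also divides $x^r-1$. But $\deg(x^r-1)<n$, so $x^r-1=0$ and $r=0$. This works over any field $R$ and any characteristic.

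\emph{Part (1).} The approach is to pick $f$ as an irreducible factor of the $n_t$-th cyclotomic polynomial over the prime field $R_0$ of $R$, and then extend scalars.

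First, since $\nu_p(n_t)=0$ (meaning $p\nmid n_t$, or $p=0$), the polynomial $x^{n_t}-1$ is separable: its derivative $n_tx^{n_t-1}$ is coprime to it. So in a splitting field $E$ of $x^{n_t}-1$ over $R$, the roots form a cyclic group $\mu_{n_t}$ of order exactly $n_t$. Choose a primitive $n_t$-th root of unity $\zeta\in E$, and let $f(x)\in R[x]$ be its minimal polynomial. Then $f$ is irreducible and divides $x^{n_t}-1$.

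Second, suppose $f\mid x^{n_j}-1$ for some $j\in[t-1]$. Then $\zeta^{n_j}=1$, so the order $n_t$ of $\zeta$ divides $n_j$. This contradicts the hypothesis $n_t\nmid n_j$. Hence $f\nmid x^{n_j}-1$ for every $j\in[t-1]$.

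The only point needing care is the existence of a primitive $n_t$-th root of unity, which is exactly where $p\nmid n_t$ (i.e.\ $\nu_p(n_t)=0$) is used. Any finite subgroup of $E^\times$ is cyclic, and separability gives $|\mu_{n_t}|=n_t$, so such a $\zeta$ exists. The field $R$ itself plays no role beyond its characteristic $p$, since $f$ is taken as a minimal polynomial over $R$, and the hypotheses $\nu_p(n_i)=0$ for $i<t$ and $t\ge 2$ are not needed for this argument.
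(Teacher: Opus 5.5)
Your proof is correct. The paper gives no argument of its own for this lemma; it is simply recalled as a standard number-theoretic fact in Subsection~\ref{number}, so there is no route to compare against. Your division-with-remainder argument for (2) and your primitive-root-of-unity argument for (1) are the standard justifications. You are also right that the hypotheses $t\ge 2$ and $\nu_p(n_i)=0$ for $i<t$ are not needed: your $f$ excludes every $n_j$ with $n_t\nmid n_j$ at once.

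Two small points.

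First, you read $p$ as the characteristic of $R$. That is what the paper intends, since the lemma is applied in Lemma~\ref{preone-more}, where $R$ has characteristic $p$. This reading is also essential. If $\mathrm{char}\,R=2$ and $p=3$, then $t=2$, $n_1=1$, $n_2=2$ satisfies all the stated hypotheses. Yet $x^2-1=(x-1)^2$ has only the irreducible factor $x-1$, which divides $x^{n_1}-1$. It is worth saying this identification explicitly rather than in passing.

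Second, your opening paragraph announces that you will choose $f$ over the prime field and then extend scalars. That is not what you do, and it would not work as stated, because an irreducible factor over the prime field need not remain irreducible over $R$. Your actual choice, the minimal polynomial of $\zeta$ over $R$ itself, is the right one and needs no extension step. Delete the announcement so the outline matches the argument.
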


For a subset  $T:=\{ m_1, m_2, \cdots, m_s\}$ of $\mathbb{Z}_{>0}$ with $m_1>m_2>\cdots >m_s$, we associate a set $\mathcal{J}_T:=\{m_1,m_1-m_2,\cdots,m_1-m_s\}$ and a multiset $\mathcal{H}_T:= \{\{m_1-m_2,\cdots,m_{s-1}-m_s,m_s\}\}$. Let $\mathcal{D}_T$ be the multiset obtained from $\mathcal{H}_T$ by removing all $1$'s.

\begin{Lem} \label{no-1} For subsets $T:=\{ m_1, m_2, \cdots, m_s\}$ and $H:=\{n_1, n_2, \cdots, n_t\}$ of $\mathbb{Z}_{>0}$ with $m_1>m_2>\cdots >m_s$ and $n_1>n_2>\cdots>n_t$, if $H=\mathcal{J}_T$, then $s=t$, $T=\mathcal{J}_H$ and $\mathcal{H}_T=\mathcal{H}_H$.
\end{Lem}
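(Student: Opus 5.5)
Since $m_1>m_2>\cdots>m_s$, the $s$ numbers $m_1, m_1-m_2, \dots, m_1-m_s$ are pairwise distinct. The plan is therefore to compute $H$ explicitly in decreasing order. The first step is to observe that $m_1-m_s<\cdots<m_1-m_2<m_1$ and that all of these are positive. Hence $\mathcal{J}_T$ is an honest set with exactly $s$ elements. From $H=\mathcal{J}_T$ we then get $t=s$, together with
$$n_1=m_1,\quad n_k=m_1-m_{s+2-k}\ \ (2\le k\le s).$$
In words, the largest element of $H$ is $m_1$, and the remaining elements, listed in decreasing order, are $m_1-m_2, m_1-m_3,\dots,m_1-m_s$ (the index reverses because subtracting a decreasing sequence from $m_1$ gives an increasing one).

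The second step is to compute $\mathcal{J}_H=\{n_1,n_1-n_2,\dots,n_1-n_t\}$. For $2\le k\le s$ we have $n_1-n_k=m_1-(m_1-m_{s+2-k})=m_{s+2-k}$. As $k$ runs over $2,\dots,s$, the index $s+2-k$ runs over $s,\dots,2$. Together with $n_1=m_1$, this gives $\mathcal{J}_H=\{m_1,m_2,\dots,m_s\}=T$.

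The third step handles $\mathcal{H}$. The consecutive differences of $H$ are $n_1-n_2=m_s$ and, for $2\le k\le s-1$, $n_k-n_{k+1}=m_{s+1-k}-m_{s+2-k}$. The last term is $n_s=m_1-m_2$. So $\mathcal{H}_H=\{\{m_s\}\}\cup\{\{m_{j-1}-m_j: 3\le j\le s\}\}\cup\{\{m_1-m_2\}\}$. As a multiset this is exactly $\{\{m_1-m_2,\dots,m_{s-1}-m_s,m_s\}\}=\mathcal{H}_T$.

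The only point needing care is the bookkeeping of indices in the reversal. The degenerate case $s=1$ is immediate: then $H=\{m_1\}=T$ and both multisets are $\{\{m_1\}\}$. I do not expect any genuine obstacle.
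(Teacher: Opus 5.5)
Your proof is correct. The formula $n_1=m_1$, $n_k=m_1-m_{s+2-k}$ for $2\le k\le s$ is right, and from it $\mathcal{J}_H=T$ and $\mathcal{H}_H=\mathcal{H}_T$ follow exactly as you compute, including the cases $s=1,2$. The paper states this lemma without proof, and your direct computation is the natural way to verify it.

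One slip needs fixing in your first step. The chain should read $m_1-m_2<m_1-m_3<\cdots<m_1-m_s<m_1$. So, listed in decreasing order, the elements after $m_1$ are $m_1-m_s,\dots,m_1-m_2$; this is what your formula for $n_k$ already encodes. Your displayed inequality and your verbal description both state this order the wrong way round.
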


Let $\#_s(I)$ denote the multiplicity of a natural number $s$ in a multiset $I$. By Lemma \ref{slopop}, we have the following lemma.

\begin{Lem}\label{preofpp}
Suppose $p>0$ and $m, t, t'\in \mathbb{N}$. Let $T:=\{ p^{m_1}, p^{m_2}, \cdots, p^{m_\ell}\}$ with integers $m_1>m_2>\cdots >m_\ell\ge 0$, and $T^+:=T\cup \{p^m\}$.

$(1)$ If $m>t$, we have $\#_{p^m-p^t}(\mathcal{D}_{T^+})\ge \#_{p^m-p^t}(\mathcal{D}_{T})$ and $\#_{p^m}(\mathcal{D}_{T^+})\ge \#_{p^m}(\mathcal{D}_{T})$. Moreover, if $m>m_\ell$ and $p^m-p^{m_k}>1$, then $\#_{p^m-p^{m_k}}(\mathcal{D}_{T^+})>\#_{p^m-p^{m_k}}(\mathcal{D}_{T})$, where $k:=\min\{\,i\in[\ell]\mid m\ge m_i\,\}$.

$(2)$ If $t>m$, then $\#_{p^{t}}(\mathcal{D}_{T})\ge \#_{p^{t}}(\mathcal{D}_{T^+})$. Moreover, if $m_\ell>m$, then $\#_{p^{m_\ell}}(\mathcal{D}_{T})> \#_{p^{m_\ell}}(\mathcal{D}_{T^+})$.

$(3)$ If $t>m>t'$, then $\#_{p^{t}-p^{t'}}(\mathcal{D}_{T})\ge \#_{p^{t}-p^{t'}}(\mathcal{D}_{T^+})$. Moreover, if $\ell\ge 2$ and $m_i>m>m_{i+1}$ for some $i\in [\ell-1]$, then $\#_{p^{m_i}-p^{m_{i+1}}}(\mathcal{D}_{T})$ $ > \#_{p^{m_i}-p^{m_{i+1}}}(\mathcal{D}_{T^+})$.
\end{Lem}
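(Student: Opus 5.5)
The plan is to make explicit how $\mathcal{H}_{T^+}$ arises from $\mathcal{H}_T$ when the single element $p^m$ is inserted, and then to rule out accidental coincidences of values using Lemma \ref{slopop}. If $m=m_i$ for some $i$, then $T^+=T$ and every inequality is an equality; in that case none of the strict claims applies either, because their hypotheses force $m\notin\{m_1,\dots,m_\ell\}$. So we assume from now on that $p^m\notin T$.

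\emph{Effect of inserting $p^m$.} There are three cases.

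(a) If $m>m_1$, then $\mathcal{H}_{T^+}=\mathcal{H}_T\cup\{\{p^m-p^{m_1}\}\}$; nothing is removed.

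(b) If $m_{k-1}>m>m_k$ for some $2\le k\le \ell$, then the element $p^{m_{k-1}}-p^{m_k}$ is removed and the two elements $p^{m_{k-1}}-p^m$ and $p^m-p^{m_k}$ are added.

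(c) If $m<m_\ell$, then the element $p^{m_\ell}$ is removed and the two elements $p^{m_\ell}-p^m$ and $p^m$ are added.

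Passing to $\mathcal{D}$ only discards $1$'s. Hence, for any value $v>1$, $\#_v(\mathcal{D}_{T^+})-\#_v(\mathcal{D}_T)$ equals the number of added copies of $v$ minus the number of removed copies of $v$. For $v=1$ both counts are $0$ and every claim is trivial; this covers $p^m-p^t=1$, which happens only for $p=2,m=1,t=0$. So in each part it suffices to check whether $v$ can equal the removed element or an added element.

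\emph{Part (1)} ($m>t$, $v\in\{p^m-p^t,\,p^m\}$). We show that $v$ never equals the removed element.
\begin{itemize}
\item In case (b), $p^{m_{k-1}}-p^{m_k}=p^m-p^t$ would force $m_{k-1}=m$ by Lemma \ref{slopop}(1), which is impossible.
\item In case (b), $p^{m_{k-1}}-p^{m_k}=p^m$ would force $m=m_k$ by Lemma \ref{slopop}(3), which is impossible.
\item In case (c), $p^{m_\ell}=p^m$ is impossible since $m\neq m_\ell$.
\item In case (c), $p^{m_\ell}=p^m-p^t$ would give $m_\ell=m-1<m$ by Lemma \ref{slopop}(3), contradicting $m<m_\ell$.
\end{itemize}
Hence the counts cannot decrease. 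For the strict statement, $m>m_\ell$ puts us in case (a) or (b), with $k$ as defined in the statement (in case (a), $k=1$). The added element $p^m-p^{m_k}$ is $>1$ by assumption, and by the above it differs from the removed element. So its count strictly increases.

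\emph{Part (2)} ($t>m$, $v=p^t$). We show that no added element equals $p^t$.
\begin{itemize}
\item The elements $p^m-p^{m_1}$, $p^m-p^{m_k}$ and $p^m$ are all less than $p^t$.
\item An added element of the form $p^a-p^m$ with $a>m$ (this covers $p^{m_{k-1}}-p^m$ and $p^{m_\ell}-p^m$) equals $p^t$ only if $p=2$ and $t=m$, by Lemma \ref{slopop}(3). This contradicts $t>m$.
\end{itemize}
So the count cannot increase. If $m_\ell>m$, we are in case (c): the element $p^{m_\ell}>1$ is removed, and it equals no added element by the same checks. This gives strictness.

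\emph{Part (3)} ($t>m>t'$, $v=p^t-p^{t'}$). We show that no added element equals $v$.
\begin{itemize}
\item Since $v\ge p^{t}-p^{t-1}\ge p^{t-1}\ge p^m$, every added element of the form $p^m-p^b$ is strictly smaller than $v$.
\item $p^a-p^m=p^t-p^{t'}$ forces $m=t'$ by Lemma \ref{slopop}(1), which is impossible.
\item $p^m=p^t-p^{t'}$ forces $m=t'$ by Lemma \ref{slopop}(3), which is impossible.
\end{itemize}
So the count cannot increase. For strictness, $m_i>m>m_{i+1}$ is case (b) with $k=i+1$. The removed element $p^{m_i}-p^{m_{i+1}}$ is $>1$: equality to $1$ would need $m_i=1$ and $m_{i+1}=0$ by Lemma \ref{slopop}(3), leaving no room for $m$. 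It also equals no added element by the checks just made, so its count strictly drops.

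The main obstacle is the bookkeeping in part (1): one must be sure that the index $k$ in the statement matches the insertion position in cases (a) and (b), and that $v>1$ is handled exactly where $1$'s are deleted.
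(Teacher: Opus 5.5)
Your proof is correct. It follows the route the paper intends: the paper gives no written proof and only says the lemma follows from Lemma \ref{slopop}, and your three-case description of how $\mathcal{H}_{T^+}$ differs from $\mathcal{H}_T$, with Lemma \ref{slopop} used to rule out coincidences with the added or removed differences, is a faithful expansion of that argument.
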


\section{New types of equivalence relations on matrices\label{sect3}}
In this section, we introduce new equivalence relations on all square matrices and compare them with other known equivalence relations.

Given a matrix $c\in M_n(R)$, we denote by
$\tilde{\mathcal{E}}_c$ the multiset of all elementary divisors of $c$, and $\mathcal{E}_c$ the set of elementary divisors of $c$, which is obtained from $\tilde{\mathcal{E}}_c$ by removing duplicate elements. Let
$$\mathcal{M}_c:=\{f(x)\in \mathcal{E}_c\mid f(x) \mbox{ is maximal with respect to the polynomial divisibility} \le\},$$ where $f(x)\le g(x)$ means $f(x)\mid g(x)$ for polynomials $f(x),g(x)\in R[x]$ of positive degree. Let $m_c(x)$ denote the minimal polynomial of $c$ over $R$ and let $d_1(x),\cdots, d_r(x)$ be invariant factors of $c$ with $d_i(x)\mid d_{i+1}(x),$ $1\le i \le r-1$. Then $\mathcal{M}_c$ is determined completely by $d_r(x)=m_c(x)$.

Let $\mathcal{R}_c:=\{f(x)\in \mathcal{M}_c\mid f(x) \mbox{ is reducible}\}$ be the set of all reducible maximal divisors of $c$.

For $f(x)\in \mathcal{M}_c$, let $\tilde{P}_c(f(x))$ be the multiset of \emph{power indices} of $f(x)$ in $\tilde{\mathcal{E}}_c$, defined by
$$\tilde{P}_c(f(x)):=\{\{i\ge 1 \mid  \exists \mbox{ irreducible polynomial } p(x) \mbox{ such that } p(x) \mbox{ divides } f(x), p(x)^i\in \tilde{\mathcal{E}}_c \}\},$$ and let $P_c(f(x))$ be the set of \emph{power indices} of $f(x)$ in $\mathcal{E}_c$, it is obtained from $\tilde{P}_c(f(x))$ by deleting duplicate elements.

Let $\mathcal{I}_c:=\big\{f(x)\in R[x] \mid f(x) \mbox{ is irreducible such that } f(x)^i\in \mathcal{M}_c \text{ for some } i \text{ with } |P_c(f(x)^i)|\neq \max\{j\in P_c(f(x)^i)\}\big\}$. For an irreducible polynomial $g(x)\in R[x]$, $P_c(g(x))$ is defined by
$P_c(g(x)):=P_c(g(x)^i)$ if $g(x)^i\in \mathcal{M}_c$ for some
$i\in \mathbb{N}$, and $P_c(g(x)) :=\emptyset$ otherwise.

Now, we define a relation $\sim$ on $\mathcal{I}_c$: For $f(x), g(x)\in \mathcal{I}_c$, we write $f(x)\sim g(x)$ provided $R[x]/(f(x))\simeq R[x]/(g(x))$ as $R$-algebras. This equivalence relation on $\mathcal{I}_c$ gives rise to a partition of  $\mathcal{I}_c$ into its equivalence classes $\mathcal{I}_{c,1},\cdots, \mathcal{I}_{c,r_c}$, where $r_c$ is the number of the equivalence classes. For $i\in [r_c]$, let $$\mathcal{D}_{c,i}:=\bigcup_{f(x)\in \mathcal{I}_{c,i}}\mathcal{D}_{P_c(f(x))} \mbox{ and } Q_{c,i}:=R[x]/(p_{c,i}(x)),$$
where $p_{c,i}(x)$ is a fixed representatives of the equivalence class $\mathcal{I}_{c,i}$. Note that $\mathcal{D}_{c,i}$ is a union of multisets and that $Q_{c,i}$ is independent of the choice of the representatives, up to isomorphism of $R$-algebras.

We define $\mathcal{U}_c:=\bigcup_{g(x)\in \mathcal{M}_c}\mathcal{D}_{P_c(g(x))}$, where the union is taken in the sense of multisets. Clearly, $\mathcal{U}_c=\bigcup_{f(x)\in \mathcal{I}_c}\mathcal{D}_{P_c(f(x))}$.

\smallskip
Now, we introduce two new types of equivalence relations on all square matrices over a field.

\begin{Def}\label{newequrel}
Two matrices $c\in M_n(R)$ and $d\in M_m(R)$ are said to be

$(1)$ \emph{$I$-equivalent} if there exists a bijection $\pi:\mathcal{M}_c\ra\mathcal{M}_d$, such that $R[x]/(f(x)) \simeq R[x]/((f(x))\pi)$ as $R$-algebras and ${\tilde{P}_c(f(x))} = {\tilde{P}_d((f(x))\pi)}$ for all $f(x)\in \mathcal{M}_c$. In this case, we simply write $c\stackrel{I}\sim d$.

$(2)$  \emph{$Sg$-equivalent} if $r_c=r_d$ and there exists a permutation $\sigma\in \Sigma_{r_c}$ such that $Q_{c,i}\simeq Q_{d,(i)\sigma}$ as $R$-algebras and $\mathcal{D}_{c,i}=\mathcal{D}_{d,(i)\sigma}$ for all $i\in [r_c]$. In this case, we simply write $c\stackrel{Sg}\sim d$.
\end{Def}

Clearly, $c\stackrel{I}\sim d$ and $c\stackrel{Sg}\sim d$ are equivalence relations on the set of all square matrices over $R$.
If $c\stackrel{I}\sim d$, then $\deg m_c(x)=\deg m_d(x)$ and $n=\deg \chi_c(x)=\deg \chi_d(x)=m$. If $c\stackrel{Sg}\sim d$, then $\mathcal{U}_c=\mathcal{U}_d$. In this case, $\mathcal{I}_c=\emptyset$ if and only if $\mathcal{I}_d=\emptyset$.

The two equivalence relations are different from the equivalence relations  introduced in \cite[Definition 3.1]{lx1} and \cite[Definition 3.1]{lx2}. We recall these relations right now.

\begin{Def}{\rm \cite{lx1,lx2}}\label{newequrel2}
Two matrices $c\in M_n(R)$ and $d\in M_m(R)$ are said to be

$(1)$ \emph{$M$-equivalent} if there is a bijection $\pi:\mathcal{M}_c\ra\mathcal{M}_d$, such that $R[x]/(f(x)) \simeq R[x]/((f(x))\pi)$ as $R$-algebras and ${P_c(f(x))} = {P_d((f(x))\pi)}$ for all $f(x)\in \mathcal{M}_c$. In this case, we write $c\stackrel{M}\sim d$.

$(2)$  \emph{$D$-equivalent} if there is a bijection $\pi:\mathcal{M}_c\ra\mathcal{M}_d$, such that $R[x]/(f(x)) \simeq R[x]/((f(x))\pi)$ as $R$-algebras and $\mathcal{H}_{P_c(f(x))}= \mathcal{H}_{P_d((f(x))\pi)}$ for all $f(x)\in \mathcal{M}_c$. In this case, we write $c\stackrel{D}\sim d$.

$(3)$ \emph{$AD$-equivalent} if there is a bijection $\pi:\mathcal{M}_c\ra\mathcal{M}_d$, such that $R[x]/(f(x))\simeq R[x]/((f(x))\pi)$ as $R$-algebras and either $P_c(f(x))= {P_d((f(x))\pi)}$ or $P_c(f(x))=\mathcal{J}_{P_d((f(x))\pi)}$ for all $f(x)\in \mathcal{M}_c$. In this case, we write $c\stackrel{AD}\sim d$.

$(4)$ \emph{$S$-equivalent} if there is a bijection $\pi: \mathcal{R}_c\ra \mathcal{R}_d$, such that $R[x]/(f(x))\simeq R[x]/((f(x))\pi)$ as $R$-algebras and either $P_c(f(x))= P_d((f(x))\pi)$ or $P_c(f(x))=\mathcal{J}_{P_d((f(x))\pi)}$ for all $f(x)\in \mathcal{R}_c$. In this case, we write $c\stackrel{S}\sim d$.
\end{Def}

The interrelations among these equivalence relations are indicated in the following remark. In general, the converses of all these implications are not true.

\begin{Rem} \label{rber}\rm{
Let $c\in M_n(R)$ and $d\in M_m(R)$. Then the following implications hold.
$$\xymatrix@C=0.7cm@R=0.01cm{ & & & c\stackrel{D}\sim d \ar@{=>}[dr]&\\
    c\stackrel{I}\sim d \ar@{=>}[r] & c\stackrel{M}\sim d \ar@{=>}[r] & c\stackrel{AD}\sim d \ar@{=>}[ur]\ar@{=>}[dr] & & c\stackrel{Sg}\sim d.\\
    & & & c\stackrel{S}\sim d \ar@{=>}[ur] &
}$$

{\it Proof.} Clearly, $c\stackrel{I}\sim d\Ra c\stackrel{M}\sim d \Ra c\stackrel{AD}\sim d \Ra c\stackrel{D}\sim d$ (by Lemma \ref{no-1}), and  $c\stackrel{AD}\sim d\Ra c\stackrel{S}\sim d$. We have the following facts.

(i) If $f(x)$ and $g(x)$ are irreducible polynomials and $i,j\in\mathbb{Z}_{>0}$ such that $R[x]/(f(x)^i)\simeq R[x]/(g(x)^j)$ as algebras, then $i=j$ and $R[x]/(f(x))\simeq R[x]/(g(x))$ as algebras.

(ii) If $f(x)\in \mathcal{M}_c$, then $|P_c(f(x))|=\max\{i\in P_c(f(x))\}$ if and only if $\mathcal{H}_{P_c(f(x))}=\{\{1,\cdots,1\}\}$ contains exactly $\max\{i\in P_c(f(x))\}$ elements if and only if $\mathcal{D}_{P_c(f(x))}=\emptyset$.

(iii) If there is a bijection $\pi:\mathcal{I}_c\ra\mathcal{I}_d$ such that $R[x]/(f(x)) \simeq R[x]/((f(x))\pi)$ as algebras and $\mathcal{D}_{P_c(f(x))}=\mathcal{D}_{P_d((f(x))\pi)}$ for all $f(x)\in \mathcal{I}_c$, then $c\stackrel{Sg}\sim d$.

Suppose $c\stackrel{D}\sim d$. Then, by definition, there is a bijection $\pi:\mathcal{M}_c\ra\mathcal{M}_d$ such that $R[x]/(f(x)) \simeq R[x]/((f(x))\pi)$ as algebras and $\mathcal{H}_{P_c(f(x))}= \mathcal{H}_{P_d((f(x))\pi)}$ for all $f(x)\in \mathcal{M}_c$. By (ii), $|P_c(f(x))|=\max \{i\in P_c(f(x))\}$ if and only if $|P_d((f(x))\pi)|=\max\{j\in P_d((f(x))\pi)\}$. Hence $\pi$ induces a bijection $\pi^\prime: \mathcal{I}_c\ra\mathcal{I}_d$ such that $R[x]/(f(x)) \simeq R[x]/((f(x))\pi')$ as algebras and $\mathcal{H}_{P_c(f(x))}= \mathcal{H}_{P_d((f(x))\pi')}$ for all $f(x)\in \mathcal{I}_c$ by (i). Hence $\mathcal{D}_{P_c(f(x))}=\mathcal{D}_{P_d((f(x))\pi')}$ for all $f(x)\in \mathcal{I}_c$. It follows from (iii) that $c\stackrel{Sg}\sim d$.

Suppose $c\stackrel{S}\sim d$. Then, by definition, there is a bijection $\pi: \mathcal{R}_c\ra \mathcal{R}_d$ such that $R[x]/(f(x))\simeq R[x]/((f(x))\pi)$ as $R$-algebras and either $P_c(f(x))= P_d((f(x))\pi)$ or $P_c(f(x))=\mathcal{J}_{P_d((f(x))\pi)}$ for all $f(x)\in \mathcal{R}_c$. Hence $\mathcal{H}_{P_c(f(x))}= \mathcal{H}_{P_d((f(x))\pi)}$ for all $f(x)\in \mathcal{R}_c$. Furthermore, for an irreducible polynomial $f(x)\in \mathcal{M}_c$, we have $|P_c((f(x))\pi)|=\max\{j\in P_c((f(x))\pi)\}=1$. Similarly to the case of $c\stackrel{D}\sim d$, we can prove $c\stackrel{Sg}\sim d$. $\square$
}
\end{Rem}

In general, each inverse implications of the above diagram is not true.
Finally, we give examples to illustrate $I$-equivalences and $Sg$-equivalences. One of the examples shows in general that Sg-equivalences do not have to imply $D$-equivalence nor $S$-equivalences.
\begin{Bsp}\label{newe}{\rm
Let $R$ be a field and $J_n(\lambda)$ the $n\times n$ Jordan matrix with the eigenvalue $\lambda\in R$.

(1) Similar matrices are $I$-equivalent, but the converse is not true in general.  Let $c:=J_3(0)\oplus J_3(0)\in M_6(R)$ and $d:=J_3(1)\oplus J_3(1)\in M_6(R)$, where $\oplus$ stands for forming the diagonal block matrix. Then $\tilde{\mathcal{E}}_c=\{\{x^3,x^3\}\}, \tilde{\mathcal{E}}_d=\{\{(x-1)^3,(x-1)^3\}\}$, $\mathcal{M}_c=\{x^3\}, \mathcal{M}_d=\{(x-1)^3\}, \tilde{P}_c(x^3)=\{\{3,3\}\}=\tilde{P}_d((x-1)^3)$, $m_c(x)=x^3, m_d(x)=(x-1)^3$. If we define $\pi:\mathcal{M}_c\to \mathcal{M}_d, x^3\mapsto (x-1)^3$, then $c\stackrel{I}\sim d$. Since $c$ and $d$ have different minimal polynomials, they are not similar. Thus the $I$-equivalence is a proper generalization of the similarity relation of matrices.

(2) Let $c:=J_2(0)\oplus J_4(0)\in M_6(R)$ and $d:=J_2(0)\oplus J_2(1)\in M_4(R)$. Clearly, $\mathcal{E}_c=\{x^2,x^4\}, \mathcal{E}_d=\{x^2,(x-1)^2\}=\mathcal{M}_d=\mathcal{R}_d, \mathcal{M}_c=\{x^4\}=\mathcal{R}_c, \mathcal{I}_c=\{x\}, \mathcal{I}_d=\{x,x-1\}, r_c=1=r_d, Q_{c,1}\simeq R[x]/(x)\simeq Q_{d,1}$ and $\mathcal{D}_{c,1}=\{2,2\}=\mathcal{D}_{d,1}$. Thus $c$ and $d$ are $Sg$-equivalent. Now, it follows from $|\mathcal{M}_c|=|\mathcal{R}_c|=1\neq 2=|\mathcal{M}_d|=|\mathcal{R}_d|$ that $c$ and $d$ are neither $D$-equivalent nor $S$-equivalent.
}\end{Bsp}

\section{Singularity categories and singular equivalences}\label{sec4}
In this section we describe the singularity categories and singular equivalences of centralizer matrix algebras over arbitrary fields. To this purpose, we study singularity categories of endomorphism algebras of generators over quotients of polynomial algebras. In this way, we describe the singularity categories of centralizer matrix algebras as  products of stable module categories. Note that, when $R=\mathbb{C}$, the singularity categories and singular equivalences of endomorphism algebras of generators over quotients of polynomial algebras have been investigated in \cite[Remark 4.11, Corollary 4.13]{kk1}.
However, the argument there relies on geometric methods, in this case, algebraically closed field is reasonably assumed, whereas we provide a purely algebraic and elementary approach over arbitrary fields.

\subsection{Singularity categories}
In this subsection we give a precise description of the singularity categories of centralizer matrix algebras. But we start with a more general setting of exact categories.

Let $(\A,\mathcal{S})$ be an exact category, and let $\mathcal{C}$ be a full additive subcategories of $\A$.
Suppose that $\mathcal{S}'$ is a subclass of $\mathcal{S}$ such that it is closed under isomorphisms. The category $\mathcal{C}\subseteq \A$ is said to be \emph{closed under admissible push-outs of $\mathcal{S}'$} if, for any admissible monomorphism $f:X\to Y$ of $\mathcal{S}'$ and homomorphism $\phi:X\to Z$ in $\mathcal{C}$, the push-out of $(f,\phi)$ in $\mathcal{A}$ belongs to $\mathcal{C}$. Dually, the category $\mathcal{C}\subseteq \A$ is said to be \emph{closed under admissible pull-backs of $\mathcal{S}'$} if, for any admissible epimorphism $g:Y\to Z$ of $\mathcal{S}'$ and homomorphism $\psi:X\to Z$ in $\mathcal{C}$, the pull-back of $(g,\psi)$ in $\mathcal{A}$ belongs to $\mathcal{C}$.

\begin{Lem}\label{popoapb}
Let $(\A,\mathcal{S})$ be an exact category with a full additive subcategory $\mathcal{D}$, and let $X \xr{f} Y \xr{g} Z$ be an admissible exact sequence of $\mathcal{S}$ such that $f$ and $g$ are $\mathcal{D}$-monic and $\mathcal{D}$-epic, respectively. 

$(1)$ For any morphism $\phi:X\to X^\prime$ in $\A$, there is the push-out diagram in $\A$:

$$\xymatrix@R=0.5cm{
    X \ar[r]^{f}\ar[d]_{\phi} & Y \ar[r]^{g}\ar[d]^{\phi'} & Z \ar@{=}[d] \\
    X' \ar[r]^{f'} & Y' \ar[r]^{g'} & Z  ,
}$$
with $(f',g')$ an admissible exact sequences of $\mathcal{S}$ such that $f'$ and $g'$ are $\mathcal{D}$-monic and $\mathcal{D}$-epic, respectively.

$(2)$ Dually, for any morphism $\psi:Z'\to Z$ in $\A$, there is the pull-back diagram in $\A$:

$$\xymatrix@R=0.5cm{
     X \ar[r]^{f'}\ar@{=}[d] & Y' \ar[r]^{g'}\ar[d]_{\psi'} & Z' \ar[d]^{\psi} \\
     X \ar[r]^{f} & Y \ar[r]^{g} & Z  ,
}$$
with $(f',g')$ an admissible exact sequences of $\mathcal{S}$ such that $f'$ and $g'$ are $\mathcal{D}$-monic and $\mathcal{D}$-epic, respectively.
\end{Lem}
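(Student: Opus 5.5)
We prove (1); (2) is the formal dual, obtained by reversing all arrows and interchanging $\mathcal{D}$-monic with $\mathcal{D}$-epic. The approach is to use the standard push-out axiom of exact categories and then check the two approximation-type properties by hand.

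\textbf{Step 1 (the diagram).} Since $f$ is an admissible monomorphism, the push-out of $(f,\phi)$ exists in $\A$ by the axioms of an exact category (see \cite{q1}). Call it $X'\xr{f'}Y'$ with $\phi':Y\to Y'$ and $\phi f'=f\phi'$; moreover $f'$ is again an admissible monomorphism. The universal property, applied to the pair $g:Y\to Z$ and $0:X'\to Z$ (note $fg=0=\phi\cdot 0$), gives a unique $g':Y'\to Z$ with $\phi' g'=g$ and $f'g'=0$. By the standard lemma on push-outs along admissible monics, $(f',g')$ is a kernel-cokernel pair belonging to $\mathcal{S}$, and the displayed diagram commutes with the identity on $Z$.

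\textbf{Step 2 ($g'$ is $\mathcal{D}$-epic).} Let $D\in\mathcal{D}$ and $\alpha:D\to Z$. Because $g$ is $\mathcal{D}$-epic, there is $\beta:D\to Y$ with $\beta g=\alpha$. Then $\beta\phi'$ satisfies $\beta\phi' g'=\beta g=\alpha$, so $\alpha$ lifts through $g'$.

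\textbf{Step 3 ($f'$ is $\mathcal{D}$-monic).} This is the main point. Let $\gamma:X'\to D$ with $D\in\mathcal{D}$. Since $f$ is $\mathcal{D}$-monic, the composite $\phi\gamma:X\to D$ extends along $f$, i.e.\ there is $\delta:Y\to D$ with $f\delta=\phi\gamma$. The universal property of the push-out, applied to the pair $(\gamma,\delta)$, then yields $\epsilon:Y'\to D$ with $f'\epsilon=\gamma$ and $\phi'\epsilon=\delta$. In particular $\gamma$ factors through $f'$, so $f'$ is $\mathcal{D}$-monic.

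\textbf{Step 4 (the dual).} For (2), form the pull-back of $g$ along $\psi$; it exists because $g$ is an admissible epimorphism. Obtain $f'$ from the kernel property, i.e.\ via the universal property applied to $(f,0)$. Then $f'$ is $\mathcal{D}$-monic by the direct argument: given $X\to D$, extend it along $f$ and precompose with $\psi'$. Finally $g'$ is $\mathcal{D}$-epic by the pull-back universal property, exactly dual to Step 3.

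No serious obstacle is expected. The only care needed is to use the composition convention of the paper correctly, namely that $fg$ means first $f$ and then $g$.
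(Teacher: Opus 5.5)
Your proof is correct and follows the paper's argument. The diagram and the admissible sequence $(f',g')$ come from the exact-category axioms, $g'$ is $\mathcal{D}$-epic because $g=\phi'g'$ is, and $f'$ is $\mathcal{D}$-monic by extending $\phi\gamma$ along $f$ and then applying the universal property of the push-out. You also write out the dual case (2), which the paper leaves to duality, and your verification there is also correct.
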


{\it Proof.} Since (2) is dual to (1), we only prove (1). The existence of the diagram follows from the axioms of exact categories. Thus, it remains only to show that $f'$ and $g'$ are $\mathcal{D}$-monic and $\mathcal{D}$-epic, respectively. Since $g=\phi'g'$ is $\mathcal{D}$-epic, $g'$ is $\mathcal{D}$-epic. It remains to show that $f'$ is $\mathcal{D}$-monic.

Let $\delta:X'\to D$ be a morphism in $\A$ with $D\in \mathcal{D}$. Since $f$ is $\mathcal{D}$-monic, there is a morphism $\rho:Y\to D$ such that $f\rho=\phi\delta$. By the universal property of push-outs, there is a morphism $\gamma:Y'\to D$ in $\A$ such that $\delta=f'\gamma$. This means that $f'$ is $\mathcal{D}$-monic. $\square$

The following lemma is straightforward and its proof is omitted.

\begin{Lem}\label{exactcat}
Let $\mathcal{C}$ and $\mathcal{D}$ be full additive subcategories of an exact category $(\A,\mathcal{S})$ with $\mathcal{D}\subseteq \mathcal{C}$. Define $$\mathcal{S}_\mathcal{D} := \left\{
C_1 \xrightarrow{f_1} C_2 \xrightarrow{f_2} C_3
\ \middle|\
\begin{array}{l}
(f_1,f_2)\ \text{is a kernel-cokernel pair in}\ \mathcal{S}, f_1 \text{ and } f_2 \text{ are } \\ \mathcal{D}\text{-monic} \text{ and } \mathcal{D}\text{-epic} \text{ in } \mathcal{C}, \text{respectively}
\end{array}
\right\}.$$ Suppose that $\mathcal{C}\subseteq \A$ is closed under admissible push-outs and pull-backs of $\mathcal{S}_\mathcal{D}$. Then

$(1)$ $(\mathcal{C},\mathcal{S}_\mathcal{D})$ is an exact category.

$(2)$ If $\mathcal{D}\subseteq \mathcal{C}$ is closed under direct summands and if, for all $C \in \mathcal{C}$, there exist an admissible monomorphism $f:C \to D$ and an admissible epimorphism $g:D^\prime \to C$ of $\mathcal{S}_\mathcal{D}$ with $D, D^\prime \in \mathcal{D}$, then $(\mathcal{C}, \mathcal{S}_\mathcal{D})$ is a Frobenius category with ${\rm Inj}_{\mathcal{S}_\mathcal{D}}(\mathcal{C})=\mathcal{D}$. Consequently, $\mathcal{C}/\mathcal{D}$ is a triangulated category.
\end{Lem}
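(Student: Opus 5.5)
The plan is to verify the exact category axioms for $(\mathcal{C},\mathcal{S}_\mathcal{D})$ directly, using the ambient exact structure $(\A,\mathcal{S})$ together with Lemma \ref{popoapb}. Part (2) will then follow from the standard characterization of Frobenius categories and Happel's theorem \cite[Theorem 2.6]{h1}.

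For (1), I check Quillen's axioms in turn.

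First, $\mathcal{S}_\mathcal{D}$ is closed under isomorphisms. It contains the split sequences $C\to C\oplus C'\to C'$, because split inclusions are $\mathcal{D}$-monic and split projections are $\mathcal{D}$-epic. Moreover, a kernel-cokernel pair in $\A$ with all terms in $\mathcal{C}$ is still a kernel-cokernel pair in the full subcategory $\mathcal{C}$.

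Next, composition. Suppose $f_1:X\to Y$ and $f_2:Y\to W$ are admissible monomorphisms of $\mathcal{S}_\mathcal{D}$. Then $f_1f_2$ is an admissible monomorphism of $\mathcal{S}$, and it is $\mathcal{D}$-monic since both factors are. Its cokernel in $\A$ sits in the usual diagram (Noether isomorphism), as an extension of $\Coker f_2$ by $\Coker f_1$. To see that this cokernel lies in $\mathcal{C}$, I realize it as a push-out: the cokernel of $f_1f_2$ is the push-out of $f_2$ along the cokernel map $Y\to\Coker f_1$. Since $f_2$ is an admissible monomorphism of $\mathcal{S}_\mathcal{D}$ and $\Coker f_1\in\mathcal{C}$, the closure hypothesis puts it in $\mathcal{C}$. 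The resulting cokernel map is $\mathcal{D}$-epic: any map from $D\in\mathcal{D}$ into it lifts to $W$ because $W\to \Coker f_2$ composed appropriately, or more simply because $W\to\Coker(f_1f_2)$ followed by $\Coker(f_1f_2)\to\Coker f_2$ recovers a $\mathcal{D}$-epic map, and the kernel term $\Coker f_1$ receives liftings from $Y$.

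I expect this bookkeeping of $\mathcal{D}$-epicness for the composite's cokernel to be the main obstacle. The first thing I would try is a diagram chase using that $\Hom(D,-)$ applied to the $3\times3$ diagram gives exact rows in the relevant places.

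The dual argument handles compositions of admissible epimorphisms.

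Push-outs and pull-backs then come straight from Lemma \ref{popoapb}. Given $X\xr{f}Y\xr{g}Z$ in $\mathcal{S}_\mathcal{D}$ and $\phi:X\to X'$ in $\mathcal{C}$, the push-out is in $\mathcal{C}$ by hypothesis. By Lemma \ref{popoapb}(1) the new sequence is in $\mathcal{S}$ with $\mathcal{D}$-monic and $\mathcal{D}$-epic maps, so it lies in $\mathcal{S}_\mathcal{D}$. Pull-backs are dual, using Lemma \ref{popoapb}(2).

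For (2), each $D\in\mathcal{D}$ is $\mathcal{S}_\mathcal{D}$-projective, because admissible epimorphisms are $\mathcal{D}$-epic by definition. Dually, each $D\in\mathcal{D}$ is $\mathcal{S}_\mathcal{D}$-injective. The hypothesis then gives enough projectives and enough injectives, all lying in $\mathcal{D}$.

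Conversely, let $P$ be projective or injective. Take the admissible epimorphism $D'\to P$, respectively the admissible monomorphism $P\to D$. It splits, so $P$ is a direct summand of an object of $\mathcal{D}$, and hence $P\in\mathcal{D}$ by closure under summands.

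Thus ${\rm Proj}={\rm Inj}=\mathcal{D}$, so $(\mathcal{C},\mathcal{S}_\mathcal{D})$ is Frobenius. By \cite[Theorem 2.6]{h1}, $\mathcal{C}/\mathcal{D}$ is triangulated.
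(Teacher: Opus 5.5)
Your proposal is correct and is the routine axiom check the paper leaves out as ``straightforward''. Lemma \ref{popoapb} supplies the push-out and pull-back axioms, the closure hypothesis puts $\Coker(f_1f_2)$ (the push-out of $f_2$ along $g_1\colon Y\to\Coker f_1$) into $\mathcal{C}$, and (2) follows from the splitting argument together with Happel's theorem.

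The $\mathcal{D}$-epic step you flagged for the composite does go through. Pushing out $Y\xrightarrow{f_2}W\xrightarrow{g_2}\Coker f_2$ along $g_1$ gives, by Lemma \ref{popoapb}(1), an admissible sequence $\Coker f_1\xrightarrow{\iota}Z\xrightarrow{\pi}\Coker f_2$. Here $h\colon W\to Z$ is the cokernel of $f_1f_2$, and $h\pi=g_2$, $f_2h=g_1\iota$. Given $\alpha\colon D\to Z$:
\begin{itemize}
\item lift $\alpha\pi=\beta g_2$ using that $g_2$ is $\mathcal{D}$-epic;
\item factor $\alpha-\beta h=\epsilon\iota$ through $\iota$, the kernel of $\pi$;
\item lift $\epsilon=\zeta g_1$ using that $g_1$ is $\mathcal{D}$-epic.
\end{itemize}
Then $\alpha=(\beta+\zeta f_2)h$, so $h$ is $\mathcal{D}$-epic.
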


Let $\Lambda$ be an Artin algebra and $M\in\Lambda\modcat$. 
The \emph{$M$-stable category} $\Lambda\modcat/[M]$ of $\Lambda\modcat$ is defined to be the quotient category of $\Lambda\modcat$ modulo $\add(M)$. For $X_1, X_2\in A\modcat$, we say that $X_1$ and $X_2$ are \emph{$M$-stably isomorphic} if  they are isomorphic in $\Lambda\modcat/[M]$, equivalently, there exist $M_1$ and $M_2$ in $\add(M)$ such that $X_1\oplus M_1\simeq X_2\oplus M_2$ as $\Lambda$-modules. For a full additive subcategory $\mathcal{C}$ of $\Lambda\modcat$, we denote by $\mathcal{C}/[M]$ the full subcategory of $\Lambda\modcat/[M]$ consisting of all objects $X$ which are $M$-stably isomorphic to objects in $\mathcal{C}$.

\textbf{From now on}, we fix an irreducible polynomial $f(x) \in R[x]$ of positive degree $u$, and set $A := R[x]/(f(x)^n)$ for a positive integer $n$. We keep the notation in Section \ref{sect2.3}.

\begin{Lem}\label{tcf}
Let $a,b\in \{0,1,\cdots,n\}$ with $a<b$, and let $M:=\bigoplus_{i=a}^b M(i)$ and $N:=M(a)\oplus M(b)$ be $A$-modules. Then

$(1)$ $(\add(M),\mathcal{S}_{\add(N)})$ is a Frobenius $R$-category with ${\rm Inj}_{\mathcal{S}_{\add(N)}}(\add(M))=\add(N)$.

$(2)$ $\add(M)/[N]\simeq \stmc{R[x]/(f(x)^{b-a})}$ as triangulated $R$-categories.
\end{Lem}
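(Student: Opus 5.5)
For part (1) I would apply Lemma \ref{exactcat} with $\mathcal{A}=A\modcat$ (abelian, all short exact sequences), $\mathcal{C}=\add(M)$ and $\mathcal{D}=\add(N)$. Two hypotheses have to be checked.

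\textbf{Closure under admissible push-outs and pull-backs.} Let $X\xr{f}Y\xr{g}Z$ be in $\mathcal{S}_{\add(N)}$ with $X,Y,Z\in\add(M)$, and let $\phi:X\to X'$ be a morphism with $X'\in\add(M)$. By Lemma \ref{popoapb}(1) the push-out sequence $X'\xr{f'}Y'\xr{g'}Z$ has $f'$ $\add(N)$-monic and $g'$ $\add(N)$-epic. I must show $Y'\in\add(M)$, i.e. that $Y'$ has no summand $M(i)$ with $i<a$ or $i>b$. By Corollary \ref{esam}(2), $g'$ is then $\add(K)$-epic and $f'$ is $\add(K)$-monic, where $K=\bigoplus_{i\le a}M(i)\oplus\bigoplus_{j\ge b}M(j)$.

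\textbf{Using the approximation property.} Since $g'$ is $\add(K)$-epic, it is in particular $\add(A/\rad^a A)$-epic, so Lemma \ref{exies} with $i=a$ gives $\soc^a$-exactness. For $W\in\add(M)$, $\soc^a(W)$ is a direct sum of copies of $M(a)$ (one per summand). Counting gives $\#(Y')=\#(X')+\#(Z)$ on the $\soc^a$ level. Because $\soc^a$ of a summand $M(i)$ is $M(\min(i,a))$, a summand of $Y'$ with $i<a$ would break this count by length. A cleaner route is to use directly that $\Hom(M(i),g')$ is surjective for $i\le a$. This forces any summand $M(i)$ of $Y'$ with $i<a$ to split off through $Z$, which is impossible: $Z\in\add(M)$ has no summands of length $<a$, and summands of length $\le a$ map into $Z$ only through $\rad$.

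Dually, $f'$ being $\add(M(b))$-monic (via the dual of Lemma \ref{exies}, using the factor modules $Y/\rad^b$, or the count with $Y/\soc^{\,\cdot}$) excludes summands of length $>b$. I expect this length bookkeeping to be the main obstacle. The first thing I would try is comparing lengths: $\ell(Y')=\ell(X')+\ell(Z)$ together with the $\soc^a$ and top-$b$ counts. Pull-backs are handled dually.

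\textbf{Enough projective-injectives.} For each $C\in\add(M)$, Corollary \ref{esam}(1) provides $0\to C\to Y\to Z\to0$ and $0\to Z'\to Y'\to C\to 0$ with $Y,Y'\in\add(N)$, $Z,Z'\in\add(M)$, and minimal $\add(K)$-approximations. These are therefore $\add(N)$-monic and $\add(N)$-epic, so they lie in $\mathcal{S}_{\add(N)}$. Since $\add(N)$ is closed under summands, Lemma \ref{exactcat}(2) yields (1).

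For (2), set $B:=R[x]/(f(x)^{b-a})$. I would define $F:\add(M)\to B\modcat$ by $X\mapsto \soc^b(X)/\soc^a(X)$, or equivalently, for $X=M(i)$ with $a\le i\le b$, $F(M(i))\cong M(i-a)$ viewed as a $B$-module. Functoriality comes from Lemma \ref{exies}, which also shows that $F$ is exact on $\mathcal{S}_{\add(N)}$ (apply Lemma \ref{exies} at levels $a$ and $b$). $F$ sends $M(a)$ and $M(b)$ to $0$ and $B$, so it maps $\add(N)$ into $\prj{B}$. It is surjective on objects up to isomorphism, because every indecomposable $B$-module has the form $M(j)$ with $0\le j\le b-a$.

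The quotient functor on the stable categories must be full and faithful. Fullness uses Lemma \ref{ftapi}(2), which lifts maps $M(j-a)\to M(k-a)$ to $M(j)\to M(k)$ compatibly with $g_{\cdot,\cdot-a}$. For faithfulness, I would show that a map $M(i)\to M(k)$ whose image under $F$ factors through $B$ must itself factor through $M(a)\oplus M(b)$, using Lemma \ref{ftapi}(1). Since $F$ is exact and preserves projective-injectives, it induces a triangle functor $\underline{F}$. A triangle functor that is fully faithful and dense is a triangle equivalence, so $\add(M)/[N]\simeq \stmc{B}$.
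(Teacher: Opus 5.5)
Your overall architecture is the paper's. That covers Lemma \ref{exactcat} with $\mathcal{C}=\add(M)$ and $\mathcal{D}=\add(N)$, closure and enough projective-injectives via Lemma \ref{popoapb}, Corollary \ref{esam} and Lemma \ref{exies}, the functor $X\mapsto X/\soc^a(X)$, and fullness via Lemma \ref{ftapi}(2). The faithfulness step, however, has a genuine gap. Lemma \ref{ftapi}(1) cannot give the factorization you need. It only factors a map between $M(i)$ and $M(j)$ through a module of length \emph{between} $i$ and $j$. For $a<i,j<b$, the modules $M(a)$ and $M(b)$ are shorter, respectively longer, than both.

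The idea you are missing is that $F$ is not faithful before passing to the quotient. $F(g)=0$ exactly when $g$ lands in $\soc^a(Y)$, and these maps have to be absorbed into $M(a)$. The paper argues as follows, with composition written left to right, $X=M(i)$ and $Y=M(j)$.
\begin{itemize}
\item Write $F(g)=\alpha\beta$, where $\beta:B\to F(Y)$ is the projective cover.
\item Use Lemma \ref{ftapi}(2) to lift to $\tilde\alpha:X\to M(b)$ and $\tilde\beta:M(b)\to Y$ with $\tilde\alpha g_{b,b-a}=g_{i,i-a}\alpha$ and $\tilde\beta g_{j,j-a}=g_{b,b-a}\beta$.
\item Then $(g-\tilde\alpha\tilde\beta)g_{j,j-a}=g_{i,i-a}F(g)-g_{i,i-a}\alpha\beta=0$.
\item So $g-\tilde\alpha\tilde\beta$ factors through $\Ker(g_{j,j-a})=\soc^a(Y)\simeq M(a)$, and hence $g$ factors through $M(a)\oplus M(b)$.
\end{itemize}

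In (1) the lower bound is only sketched, and your ``cleaner route'' is not an argument. Surjectivity of $\Hom_A(M(i),g')$ says nothing about an individual summand of $Y'$; the obstruction comes from a global count. The $\soc^a$-exact sequence gives $\ell_A(\soc^a(Y'))=a(\#(X')+\#(Z))$, and $\ell_A(\soc^a(M(i)))=\min\{i,a\}$. From these you can conclude that every summand of $Y'$ has length at least $a$ once you also know $\#(Y')\le\#(X')+\#(Z)$. There are three ways to get this:
\begin{itemize}
\item from left exactness of $\soc$;
\item as the paper does, obtaining equality from Lemma \ref{exies} at level $1$, since $M(1)\in\add(K)$;
\item by noting the $\soc^a$-sequence splits over $R[x]/(f(x)^a)$, because its end terms are free, so $\soc^a(Y')\simeq M(a)^{\#(X')+\#(Z)}$ and Krull--Schmidt finishes.
\end{itemize}
The upper bound needs no dual bookkeeping. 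The push-out gives an exact sequence $0\to X\to Y\oplus X'\to Y'\to 0$, so $LL(Y')\le b$. A pull-back is a submodule of $Y\oplus Z'$, so the same bound holds there. The lower bound for pull-backs is the same $\soc^a$ count, applied to the $\add(N)$-epic map onto $Z'$ supplied by Lemma \ref{popoapb}(2).
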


{\it Proof.} (1) Let $K:= \bigoplus_{i=0}^aM(i)\oplus\bigoplus_{j=b}^{n}M(j)$. 
We first show the following statement.

\smallskip
$(\dag)$ $\add(M)\subseteq A\modcat$ is closed under admissible push-outs and admissible pull-backs of $\mathcal{S}_{\add(N)}$.  

\smallskip
Actually, since $A$ is a local Nakayama algebra, the following facts hold for an $A$-module $X$:

1) $\#(X)=\#(\soc(X))$.

2) $X\in \add(M)$ if and only if $\ell_A(\soc^{a}(X))= a(\#(X))$ and $LL(X)\le b$.

Suppose that $f:X\to Y$ is an admissible monomorphism of $\mathcal{S}_{\add(N)}$, and that $\phi:X\to X^\prime$ is an arbitrary morphism in $\add(M)$. By Lemma \ref{popoapb}(1), we have the push-out diagram in $A\modcat$:
$$\xymatrix@R=0.5cm{
    0\ar[r] & X\ar[r]^f\ar[d]_{\phi} & Y\ar[r]^g\ar[d] & Z\ar@{=}[d] \ar[r] & 0\\
    0\ar[r] & X^\prime \ar[r]^{f'} & Y^\prime \ar[r]^{g'} & Z\ar[r] & 0.
}$$
where $f'$ and $g'$ are $\add(N)$-monic and $\add(N)$-epic, respectively. This diagram provides an exact sequence: $0\to X\xr{(f,\phi)} Y\oplus X^\prime \to Y^\prime \to 0$ in $A\modcat$, and shows $LL(Y^\prime)\le LL(Y\oplus X^\prime)\le LL(M)\le b$.

To prove $Y'\in \add(M)$, we only need to show $\ell_A(\soc^{a}(Y'))=a(\#(Y'))$ by the fact 2). The case $a\le 1$ is trivial, so we may assume $a>1$.
Since $Z\in \add(M)$ and $g'$ is $\add(N)$-epic, it follows from Corollary \ref{esam}(2) that $g'$ is $\add(K)$-epic.
Since $M(1)$ and $M(a)$ belong to $\add(K)$,  $g'$ is both $\add(M(1))$-epic and $\add(M(a))$-epic. By Lemma \ref{exies}, we have two exact sequences in $A\modcat$:
$$(\diamondsuit)\quad 0\lra \soc(X')\lra \soc(Y')\lra \soc(Z)\lra 0\,\text{ and }$$ $$(\diamondsuit\diamondsuit)\quad 0\lra \soc^a(X')\lra \soc^a(Y')\lra \soc^a(Z)\lra 0.$$  It then follows from 1) and $(\diamondsuit)$ that $\#(Y')=\#(\soc(Y'))=\#(\soc(X'))+\#(\soc(Z))=\#(X')+\#(Z)$. Since both $X'$ and $Z$ lie in $\add(M)$, we infer from $(\diamondsuit\diamondsuit)$ and 2) that $\ell_A(\soc^a(Y'))=\ell_A(\soc^a(X'))+\ell_A(\soc^a(Z))=a\big(\#(X')+\#(Z)\big)$, and therefore $\ell_A(\soc^a(Y'))=a(\#(X')+\#(Z))=a(\#(Y'))$. By the fact 2), $Y'\in \add(M)$. Hence $\add(M)\subseteq A\modcat$ is closed under admissible push-outs of $\mathcal{S}_{\add(N)}$.
Similarly, we can prove that $\add(M)\subseteq A\modcat$ is closed under admissible pull-backs of $\mathcal{S}_{\add(N)}$. This shows $(\dag)$.

Now, it follows from Corollary \ref{esam}(1) and Lemma \ref{exactcat} that $(\add(M),\mathcal{S}_{\add(N)})$ is a Frobenius $R$-category with ${\rm Inj}_{\mathcal{S}_{\add(N)}}(\add(M))=\add(N)$.

(2) Any homomorphism $f:X\to Y$ in $A\modcat$ restricts to a homomorphism $f^\prime:\soc^a(X)\to \soc^a(Y)$. Thus there exists a unique homomorphism $f^\wedge :X/\soc^a(X)\to Y/\soc^a(Y)$ of $A$-modules, fitting into the exact commutative diagram:
$$(\star)\quad \xymatrix@R=0.5cm{
  0\ar[r]& \soc^a(X)\ar[r]\ar[d]_-{f'} &X\ar[r]^-{\pi_X}\ar[d]_f & X/\soc^a(X)\ar[d]_-{f^\wedge}\ar[r] &0\\
 0\ar[r]& \soc^a(Y)\ar[r] &  Y\ar[r]^-{\pi_Y} & Y/\soc^a(Y) \ar[r] &0,
}$$
where $\pi_X$ and $\pi_Y$ are the canonical surjective homomorphisms. Hence we can define a functor
   $$ F:\; \add(M)\lra R[x]/(f(x)^{b-a})\modcat, $$
   $$ X\mapsto X/\soc^a(X) \ \text{ for }\ X\in \add(M),\quad
    f\mapsto f^\wedge \ \text{ for }\ f:X\to Y \ \text{ in } \add(M).$$
It is easy to see that $F$ is a well-defined $R$-functor and $F(\add(N))=\add(R[x]/(f(x)^{b-a}))$.

Let $B:=R[x]/(f(x)^{b-a})$. Since $\add(N)$-epic morphisms are $\add(M(a))$-epic morphisms, the functor $F$ sends admissible exact sequences in $\mathcal{S}_{\add(N)}$ to exact sequences in $B\modcat$ by Lemma \ref{exies}. Moreover, $F$ induces a triangle $R$-functor $\underline{F}:\add(M)/\add(N)\to \stmc{B}$. Now, we show that $\underline{F}$ is an equivalence.

Clearly, $\underline{F}$ is dense.
For $X, Y\in\add(M)\subseteq A\modcat$, we denote by $\StHom_N(X,Y)$ the Hom-set of $X$ and $Y$ in $\add(M)/\add(N)$, and will show that $\underline{F}:\StHom_N(X,Y)\to \StHom_B(X,Y)$ is an isomorphism of $R$-modules.

We may suppose that $X$ and $Y$ are indecomposable. If $X\in \add(N)$ or $Y\in \add(N)$, then  $\underline{F}$ is clearly an isomorphism. Now suppose $X=M(i)$ and $Y=M(j)$ with $a<i,j <b$. Then $F(X)=M(i-a), F(Y)=M(j-a)$ and  $gg_{j,j-a}=g_{i,i-a}g^{\wedge}=g_{i,i-a}F(g)$ for $g: X\to Y$ in $\add(M)$ (see the diagram $(\star)$). For $\tilde{f}:F(X)\to F(Y)$ in $B\modcat$, Lemma \ref{ftapi}(2) guarantees a morphism $f: X\to Y \in\add(M)$ such that $\underline{F}(\underline{f})=\underline{\tilde{f}}$. This shows that $\underline{F}$ is full.

To see that $\underline{F}$ is faithful, we pick up a homomorphism $g:X\to Y$ in $\add(M)$ such that $\underline{F}(\underline{g})=0$ in $\stmc{B}$, that is, $\underline{F(g)}=0$ in $\stmc{B}$. This means that $F(g)$ factorizes through a projective $B$-module, and therefore through a projective cover $\beta:B\to F(Y)$ of $F(Y)$. Hence there exists a homomorphism $\alpha:F(X)\to B$ of $B$-modules such that $F(g)=\alpha\beta$. By Lemma \ref{ftapi}(2), there exist two homomorphisms $\tilde{\alpha}:X\to M(b)$ and $\tilde{\beta}:M(b)\to Y$ of $A$-modules such that $\tilde{\alpha}g_{b,b-a}=g_{i,i-a}\alpha$ and $\tilde{\beta}g_{j,j-a}=g_{b,b-a}\beta$. Thus  we get the following commutative diagram in $A$-mod:

$$\xymatrix@R=0.5cm{ X\ar[r]^-{\tilde{\alpha}}\ar[d]^-{g_{i,i-a}} &M(b)\ar[r]^-{\tilde{\beta}}\ar[d]^-{g_{b,b-a}} & Y\ar[d]^-{g_{j,j-a}}\\
F(X)\ar[r]^-{\alpha} &B\ar[r]^-{\beta}& F(Y).\\
} $$
Here, $B$-mod is regarded as a full subcategory of $A$-mod. Due to $gg_{j,j-a}=g_{i,i-a}F(g)$, we get $(g-\tilde{\alpha}\tilde{\beta})g_{j,j-a}=gg_{j,j-a}-\tilde{\alpha}\tilde{\beta}g_{j,j-a}=g_{i,i-a}F(g)-\tilde{\alpha}g_{b,b-a}\beta
=g_{i,i-a}F(g)-g_{i,i-a}\alpha\beta=0$. This shows that $g-\tilde{\alpha}\tilde{\beta}$ factorizes through $\Ker(g_{j,j-a})\simeq M(a)$. Consequently, there exist two homomorphisms $h_1:X\to M(a)$ and $h_2:M(a)\to Y$ in $A$-mod such that $g-\tilde{\alpha}\tilde{\beta}=h_1h_2$. Let $\gamma:=(h_1,\tilde{\alpha}):X\to M(a)\oplus M(b)$ and $\delta:=\left(\begin{smallmatrix} h_2 \\ \tilde{\beta}   \end{smallmatrix}\right):M(a)\oplus M(b)\to Y$. Then $g=\gamma\delta$, that is, $\underline{g}=0$ in $\add(M)/\add(N)$. Hence $F$ is faithful. $\square$

\medskip
The next lemma characterizes the singularity categories of the endomorphism algebras of a generators over quotient algebras of polynomial algebras.

\begin{Prop}\label{scoeog}
Let $T=\{t_1,\cdots,t_s\}$ be a subset of $[n]$ with $t_1>t_2>\cdots>t_s$, and set $M:=\bigoplus_{i=1}^sM(t_i)$, $\Gamma:=\End_A(M)$. Then there is a triangle equivalence of the triangulated $R$-categories:
$$\Ds{\Gamma}\simeq \prod_{i\in\mathcal{D}_T}\stmc{R[x]/(f(x)^{i})}.$$
\end{Prop}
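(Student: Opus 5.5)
We first reduce to the case $t_1=n$, as in the proof of Lemma \ref{cmoe}: replace $A$ by $R[x]/(f(x)^{t_1})$, which leaves $M$ and $\Gamma$ unchanged, so $M$ becomes a generator ($M(n)=A$ is a summand). We also set $t_{s+1}:=0$ and $M(0)=0$. The consecutive differences $t_k-t_{k+1}$ for $k\in[s]$ are exactly the elements of $\mathcal{H}_T$, and $\mathcal{D}_T$ keeps those that are $\ge 2$.

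The plan is to realize $\Ds{\Gamma}$ as a Gorenstein stable category. Since $M$ is a generator over the self-injective algebra $A$, the algebra $\Gamma$ is Gorenstein. Indeed, $\Hom_A(M,-)$ identifies $\add(M)$ with $\prj{\Gamma}$, and $\Gamma$ has dominant dimension $\ge 2$ with $D(\Gamma)$ of finite projective dimension; one can argue this directly using the approximation sequences of Lemma \ref{lift-exact}. Hence $\Ds{\Gamma}\simeq\Stgp{\Gamma}$. Next we identify $\gp{\Gamma}$ with the image under $\Hom_A(M,-)$ of the modules $X\in A\modcat$ for which the Hom-complexes of $M$-approximation sequences stay exact. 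Concretely, I would show $\gp{\Gamma}\simeq \add(M')$ with $M':=\bigoplus_{j=0}^{n}M(j)$, i.e.\ all of $A\modcat$, taken with the exact structure $\mathcal{S}_{\add(M)}$ of sequences that are $\add(M)$-monic and $\add(M)$-epic. Under this identification the projective-injectives correspond to $\add(M)$, so $\Stgp{\Gamma}\simeq A\modcat/[M]$ as triangulated categories.

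Next we decompose $A\modcat/[M]$ into blocks. For each $k\in[s]$, consider the interval module $M_k:=\bigoplus_{i=t_{k+1}}^{t_k}M(i)$ and $N_k:=M(t_{k+1})\oplus M(t_k)$. Every indecomposable $M(i)$ outside $\add(M)$ lies strictly between $t_{k+1}$ and $t_k$ for a unique $k$. Corollary \ref{esam}(2), applied with $a=t_{k+1}$ and $b=t_k$, shows that on $\add(M_k)$ the $\add(M)$-monic/epic conditions agree with the $\add(N_k)$ ones. Thus $(\add(M_k),\mathcal{S}_{\add(N_k)})$ is a full exact Frobenius subcategory with injectives $\add(M_k)\cap\add(M)$. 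Corollary \ref{eeite} then gives a fully faithful triangle embedding of $\add(M_k)/[N_k]$ into $A\modcat/[M]$.

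Finally we need the vanishing of morphisms between different blocks. Take $X=M(i)$ and $Y=M(j)$ with $t_{k+1}<i<t_k$ and $j$ in another interval. By Lemma \ref{ftapi}(1), any morphism $M(i)\to M(j)$ with $j\ge t_k$ factors through $M(t_k)$, and if $j\le t_{k+1}$ it factors through $M(t_{k+1})$. Symmetrically for the other direction. So these Hom-spaces vanish in the quotient. Every object splits as a sum from the blocks, so Lemma \ref{acstb} yields $A\modcat/[M]\simeq\prod_k \add(M_k)/[N_k]$. Lemma \ref{tcf}(2) identifies each factor with $\stmc{R[x]/(f(x)^{t_k-t_{k+1}})}$. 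Factors with $t_k-t_{k+1}=1$ are zero, which leaves exactly the index multiset $\mathcal{D}_T$.

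The main obstacle is the identification $\Ds{\Gamma}\simeq A\modcat/[M]$ with the correct triangulated structure. This requires showing that $\Gamma$ is Gorenstein and that $\Hom_A(M,-)$ transports $(A\modcat,\mathcal{S}_{\add(M)})$ onto $(\gp{\Gamma},\mathcal{S})$ as Frobenius categories. I would first try to build, for each $M(i)$, complete resolutions by splicing the sequences $0\to M(i)\to M(t_{k+1})\oplus M(t_k)\to M(t_{k+1}+t_k-i)\to0$ from Lemma \ref{lift-exact}. These are periodic of period two, and applying $\Hom_A(M,-)$ should give totally acyclic complexes of projective $\Gamma$-modules.
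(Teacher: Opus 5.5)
Your proposal is correct. The block decomposition in your second half (the intervals $M_k$, $N_k$, then Corollary \ref{esam}(2), Corollary \ref{eeite}, Lemma \ref{ftapi}(1), Lemma \ref{acstb} and Lemma \ref{tcf}(2)) is exactly the paper's steps (1)--(2). The genuine difference is how you obtain $\Ds{\Gamma}\simeq A\modcat/[M]$.

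\textbf{The paper's route.} The paper first makes $(A\modcat,\mathcal{S}_{\add(M)})$ a Frobenius category directly, via Corollary \ref{esam} and Lemma \ref{exactcat}. It then cites \cite[Theorem 1.1(2)]{kiwy1}: because the Auslander algebra $\End_A(\bigoplus_{i=1}^n M(i))$ has global dimension $\le 2$, the functor $\Hom_A(M,-)$ induces a triangle equivalence $A\modcat/[M]\simeq\Stgp{\Gamma}\simeq\Ds{\Gamma}$. Gorensteinness of $\Gamma$ comes packaged with the citation.

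\textbf{Your route.} You prove Gorensteinness and compute $\gp{\Gamma}$ by hand, then transport the Frobenius structure from $\gp{\Gamma}$. This is more elementary and self-contained, closer to the paper's stated aim of a purely algebraic approach. It also gives $\id{}_\Gamma\Gamma\le 2$ and explicit period-two complete resolutions. The cost is a few verifications that the citation hides, which you should make explicit.

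\textbf{Points to make explicit.}

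Gorensteinness does not follow merely from $M$ being a generator over a self-injective algebra. It comes from Lemma \ref{lift-exact}, applied with $X=M$. There each inflation is a left $\add(M)$-approximation and each deflation a right one. Hence $\Ext^{i}_\Gamma(\Hom_A(M,X),\Gamma)=0$ for all $i\ge 1$ and all $X$. Every second syzygy of a $\Gamma$-module has the form $\Hom_A(M,X)$, by left exactness of $\Hom_A(M,-)$ and Yoneda on $\add(M)$. So $\id{}_\Gamma\Gamma\le 2$, and the same holds on the other side since $\Gamma\simeq\Gamma\opp$.

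These two facts also give $\gp{\Gamma}=\Hom_A(M,A\modcat)$ in both directions: your spliced resolutions show one inclusion, and "Gorenstein projective implies second syzygy" shows the other.

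For matching the exact structures, take a short exact sequence in $\gp{\Gamma}$. It comes from a sequence in $A\modcat$ that is exact (since $A\in\add(M)$) and whose deflation is $\add(M)$-epic. Its inflation is automatically $\add(M)$-monic, because $\Ext^1_\Gamma(\gp{\Gamma},\prj{\Gamma})=0$.

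\textbf{Extension-closure.} You skip the extension-closure of $\add(M_k)$ in $(A\modcat,\mathcal{S}_{\add(M)})$. The paper verifies it in step (1), with a push-out argument and $(\dag)$, to meet the hypothesis of Corollary \ref{eeite}. The omission is harmless. For $X\in\add(M_k)$, the $\mathcal{S}_{\add(N_k)}$-inflation $X\to N'$ with $N'\in\add(N_k)$ is $\add(M)$-monic by Corollary \ref{esam}(2). So every map from $X$ into $\add(M)$ factors through it, which gives full faithfulness without extension-closure.
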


{\it Proof.} Without loss of generality, we may assume $t_1=n$. Let $\A=\mathcal{C}=A\modcat$ and $\mathcal{D}=\add(M)$. It follows from Corollary \ref{esam} and Lemma \ref{exactcat}
that $(A\modcat, \mathcal{S}_{\add(M)})$ is a Frobenius $R$-category with ${\rm{Inj}}_{\mathcal{S}_{\add(M)}}(A\modcat)= \add(M)$.
Since $\{M(i)\mid i\in [n]\}$ is a complete set of pairwise non-isomorphic indecomposable $A$-modules, we have $\gd(\End_A(\oplus_{i=1}^nM(i)))\le 2$. It follows from \cite[Theorem 1.1(2)]{kiwy1} that the equivalence $\Hom_A(M,-):A\modcat\to \gp{\Gamma}$ of Frobenius $R$-categories induces a triangle equivalence $A\modcat/[M]\to \Stgp{\Gamma}\simeq \Ds{\Gamma}$.

In the following, we describe explicitly the category $A\modcat/[M]$ by two steps (1)-(2).

(1) For $i\in [s]$, $M_{i+1,i}:=\bigoplus_{j=t_{i+1}}^{t_{i}}M(j)$ and $N_{i+1,i}:=M(t_{i+1})\oplus M(t_i)$, we prove that the pair $(\add(M_{i+1,i}),$ $\mathcal{S}_{\add(N_{i+1,i})})$ is an exact $R$-subcategory of $(A\modcat, \mathcal{S}_{\add(M)})$. Here, for convenience, we understand $t_{s+1}=0$.

Indeed, since $M\in \add(\bigoplus_{j=0}^{t_{i+1}}M(j)\oplus\bigoplus_{k=t_i}^{n}M(k))$, $\add(N_{i+1,i})$-monic (or epic) morphisms in $\add(M_{i+1,i})$ are $\add(M)$-monic (or epic) in $A\modcat$ by Corollary \ref{esam}(2). Hence $\mathcal{S}_{\add(N_{i+1,i})}=\{(f,g)\in\mathcal{S}_{\add(M)}\mid f, g\in \add(M_{i+1,i})\}$.
It remains to show that $(\add(M_{i+1,i}),\mathcal{S}_{\add(N_{i+1,i})})$ is closed under extensions in $(A\modcat, \mathcal{S}_{\add(M)})$. Precisely, given an admissible exact sequence $X\xr{f}Y\xr{g}Z$ of $\mathcal{S}_{\add(M)}$ with $X, Z\in \add(M_{i+1,i})$, we have to show $Y\in \add(M_{i+1,i})$. In fact, by Corollary \ref{esam}(1), we have an admissible exact sequence of $\mathcal{S}_{\add(N_{i+1,i})}$: $0\to Z^\prime \lraf{h} M' \to Z\to 0$ with $M'\in \add(N_{i+1,i})$ and $Z'\in \add(M_{i+1,i})$. Since $g$ is $\add(N_{i+1,i})$-epic, we can get the exact commutative diagram in $A\modcat$:
$$\xymatrix@R=0.7cm{
    0\ar[r] & Z' \ar@{-->}[d]_{\phi} \ar[r]^-{h} & M' \ar[r] \ar@{-->}[d] & Z \ar[r]\ar@{=}[d] & 0\\
    0\ar[r] & X \ar[r]^{f} & Y \ar[r]^{g} & Z \ar[r] & 0.
}$$
This diagram is a push-out diagram in $A\modcat$. Since $h$ is an admissible monomorphism of $\mathcal{S}_{\add(N_{i+1,i})}$ and $\phi\in\add(M_{i+1,i})$, we deduce from the statement $(\dag)$ in the proof of Lemma \ref{tcf} that $Y\in \add(M_{i+1,i})$. This implies that $(\add(M_{i+1,i}),\mathcal{S}_{\add(N_{i+1,i})})$ is closed under extensions in $(A\modcat,\mathcal{S}_{\add(M)})$.

(2) We show $A\modcat/[M]\simeq \prod_{i\in\mathcal{D}_T}\stmc{R[x]/(f(x)^{i})}$ as triangulated $R$-categories. As a consequence, we have the triangle equivalence of triangulated $R$-categories: $\Ds{\Gamma}\simeq \prod_{i\in\mathcal{D}_T}\stmc{R[x]/(f(x)^{i})}.$

Actually, Corollary \ref{eeite} and Lemma \ref{tcf}(1) show that the embedding functor $\lambda_{i+1,i}$ induces a fully faithful triangle $R$-functor
$$\underline{\lambda_{_{i+1,i}}}:\add(M_{i+1,i})/\add(N_{i+1,i})\lra A\modcat/[M].$$ Thus we obtain the full triangulated $R$-subcategories
$ \Img(\underline{\lambda_{2,1}}), \Img(\underline{\lambda_{3,2}}),\cdots, \Img(\underline{\lambda_{s+1,s}})$ of $A\modcat/[M]$.
By Lemma \ref{ftapi}(1), there are no nonzero morphisms between such two distinct subcategories. Trivially, we have $A\modcat=\add(M_{2,1}\oplus \cdots \oplus M_{s+1,s})$. Now, it follows from Lemma \ref{acstb} that there is a triangle equivalence $A\modcat/[M]\simeq\prod_{i=1}^{s} \;\; \add(M_{i+1,i})/\add(N_{i+1,i})$ of triangulated $R$-categories. By Lemma \ref{tcf}(2),  $\add(M_{i+1,i})/\add(N_{i+1,i})\simeq \stmc{R[x]/(f(x)^{t_i-t_{i+1}})}$ and $A\modcat/[M]\simeq \prod_{i=1}^{s}\stmc{R[x]/(f(x)^{t_i-t_{i+1}})}$ as triangulated $R$-categories. Due to $\stmc{R[x]/(f(x))}=0$, we rewrite the latter equivalence as $A\modcat/[M]\simeq \prod_{i\in\mathcal{D}_T}\stmc{R[x]/(f(x)^{i})}$ as triangulated $R$-categories. $\square$

\begin{Koro}\label{gdoeog}
Let $T=\{t_1,\cdots,t_s\}$ be a subset of $[n]$ with $t_1>t_2>\cdots>t_s$, $M:=\bigoplus_{i=1}^s(M(t_i))$ and $\Gamma:=\End_A(M)$. Then
$$\gd(\Gamma)=\begin{cases} 0, & \text { if } t_1=1, \\ 2, &\text { if } t_1\neq 1 \text{ and } M \text{ is an additive generator for } R[x]/(f(x)^{t_1})\modcat, \\ \infty, &\text { otherwise.}   \end{cases}$$
In particular, $\gd(\Gamma)<\infty$ if and only if $\Gamma$ is the Auslander algebra of $R[x]/(f(x)^{t_1})$.
\end{Koro}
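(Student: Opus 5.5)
We would argue by cases according to $t_1$ and the shape of $T$, using Proposition \ref{scoeog} for the infinite case and classical facts on Auslander algebras for the finite cases. As in the proof of Lemma \ref{cmoe}, we may replace $A$ by $R[x]/(f(x)^{t_1})$ and assume $t_1=n$. Then $M$ contains the projective generator $M(n)$, and $\Gamma=\End_A(M)$ is the endomorphism algebra of a generator over a self-injective (indeed symmetric) algebra.

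\emph{Case $t_1=1$.} Then $T=\{1\}$ and $M=M(1)=R[x]/(f(x))$, a field extension of $R$. Thus $\Gamma\simeq R[x]/(f(x))$ is semisimple and $\gd(\Gamma)=0$.

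\emph{Case $t_1\neq 1$ and $M$ an additive generator.} Here $T=[n]$ with $n\ge 2$, so $\Gamma$ is the Auslander algebra of $A$. We would invoke Auslander's theorem that Auslander algebras have global dimension at most $2$, or simply quote the fact $\gd(\End_A(\oplus_{i=1}^nM(i)))\le 2$ already used in the proof of Proposition \ref{scoeog}. Global dimension $\le 1$ is excluded because $A$ is not semisimple when $n\ge 2$. Concretely, the simple $\Gamma$-module at the vertex $M(1)$ has projective resolution $0\to\Hom_A(M,M(1))\to\Hom_A(M,M(2))\to\Hom_A(M,M(1))\to S\to 0$, induced by the almost split sequence $0\to M(1)\to M(2)\to M(1)\to 0$, and this forces projective dimension exactly $2$. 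Hence $\gd(\Gamma)=2$.

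\emph{Remaining case.} Here $t_1\ge 2$ and $T\neq[t_1]$, so $t_i-t_{i+1}\ge 2$ for some $i\in[s]$, where $t_{s+1}=0$. Equivalently, $\mathcal{D}_T\neq\emptyset$. By Proposition \ref{scoeog}, $\Ds{\Gamma}$ contains $\stmc{R[x]/(f(x)^{k})}$ as a factor for some $k\ge 2$, and this category is nonzero because $R[x]/(f(x))$ is a non-projective module over $R[x]/(f(x)^k)$. Hence $\Ds{\Gamma}\neq 0$. Since $\Ds{\Lambda}=0$ whenever $\gd(\Lambda)<\infty$ (every bounded complex then lies in $\Kb{\prj{\Lambda}}$), we conclude $\gd(\Gamma)=\infty$.

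The final assertion follows directly: $\gd(\Gamma)<\infty$ exactly in the first two cases, which are precisely the cases $T=[t_1]$, that is, $M$ is an additive generator of $R[x]/(f(x)^{t_1})\modcat$ and $\Gamma$ is its Auslander algebra. (For $t_1=1$, the Auslander algebra of the field $R[x]/(f(x))$ is the field itself.)

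The main obstacle is the bookkeeping that identifies "$M$ is not an additive generator and $t_1\ge 2$" with "$\mathcal{D}_T\neq\emptyset$". This amounts to noting that $\mathcal{H}_T$ consists only of $1$'s if and only if $t_s=1$ and consecutive differences are $1$, which is the same computation as in Lemma \ref{cmoe}.
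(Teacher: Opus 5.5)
Your proposal is correct and follows essentially the paper's route: Proposition \ref{scoeog} gives $\Ds{\Gamma}\neq 0$ exactly when $\mathcal{D}_T\neq\emptyset$, i.e.\ when $T\neq[t_1]$. The finite cases are then the Auslander algebra, with global dimension at most $2$, versus the semisimple case $t_1=1$. Your explicit check through the almost split sequence $0\to M(1)\to M(2)\to M(1)\to 0$, showing that the simple module at $M(1)$ has projective dimension exactly $2$, supplies a step the paper leaves implicit, namely that $\gd(\Gamma)=1$ cannot occur.
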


{\it Proof.} For an Artin algebra $\Lambda$, $\gd(\Lambda)<\infty$ if and only if $\Ds{\Lambda}=0$. By Proposition \ref{scoeog}, we know $\Ds{\Gamma}\simeq \prod_{i\in\mathcal{D}_T}\stmc{R[x]/(f(x)^{i})}$ as triangulated categories. Thus $\Ds{\Gamma}=0$ if and only if $\mathcal{D}_T=\emptyset$ if and only if $T=[t_1]$. Equivalently, $\Ds{\Gamma}=0$ if and only if $M$ is an additive generator for $R[x]/(f(x)^{t_1})\modcat$. In this case, $\Gamma$ is a semisimple algebra if and only if $R[x]/(f(x)^{t_1})$ is a semisimple algebra if and only if $t_1=1$. $\square$

\smallskip
Remark that if $c$ has a Jordan normal form then Corollary \ref{gdoeog} follows from the cellularity of $S_n(c,R)$ by \cite{xz-LAA}. For an arbitrary field $R$, however, there may not exist any Jordan normal forms of matrices.

\smallskip
As a consequence of Proposition \ref{scoeog}, we give a description of singularity categories of centralizer matrix algebras. We first recall some notation and basic facts.

For $c\in M_n(R)$, let $m_c(x)$ be the minimal polynomial of $c$ over $R$ and $A_c:=R[x]/(m_c(x))$. We write
$$m_c(x):=\prod^{l_c}_{i=1} f_i(x)^{n_i} \mbox{  for } n_i\ge 1 \; \mbox{ and } \;U_i:=R[x]/(f_i(x)^{n_i})\mbox{  for }  i\in [l_c]$$ where $f_1(x),\cdots,f_{l_c}(x)$ are distinct irreducible (monic) polynomials in $R[x]$. Then $A_c\simeq U_1\times U_2\times\cdots\times U_{l_c}$ as $R$-algebras.

Since $A_c\simeq R[c]$ and the $R[c]$-module $R^n=\{(a_1,a_2,\cdots,a_n)^{tr}\mid a_i\in R, i\in[n]\}$ can be regarded as an $A_c$-module, we can decompose the $A_c$-module $R^n$, according to the blocks of $A_c$, in the following way:
$$(\star)\quad \quad R^n\simeq \bigoplus^{l_c}_{i=1}\bigoplus^{s_i}_{j=1} \; R[x]/(f_i(x)^{e_{ij}}),$$
where $s_i$ and $e_{ij}$ are positive integers. Note that $\{f_i(x)^{n_i}\mid i\in[l_c]\}=\mathcal{M}_c$ and  $\{\{f_i(x)^{e_{ij}}\mid i\in [l_c],  j\in [s_i]\}\}=\tilde{\mathcal{E}}$(see \cite[Chapter 4]{c1}, where $(\star)$ is stated in terms of invariant subspaces of a linear transformation).

Let $M_i:= \bigoplus_{j\in \tilde{P}_c(f_i(x)^{n_i})} R[x]/(f_i(x)^j)$ and $A_i:=\End_{U_i}(M_i)$ for $i\in  [l_c]$. Then all algebras $A_i$ are indecomposable and
$$S_n(c,R)\simeq \End_{R[c]}(R^n)\simeq \End_{A_c}(\bigoplus_{i=1}^{l_c}M_i)=\prod^{l_c}_{i=1}{\End}_{U_i}(M_i)= \prod^{l_c}_{i=1}A_i .$$
This is a decomposition of blocks of $S_n(c,R)$.
The following lemma is an immediate consequence of $(\star)$.

\begin{Lem} \label{bijection}{\rm \cite[Lemma 2.16]{lx1}}
There is a bijection $\pi$ from $\mathcal{E}_c$ to the set of non-isomorphic, indecomposable direct summands of the $A_c$-module $R^n$, sending $h(x)\in \mathcal{E}_c$ to the $A_c$-module $R[x]/(h(x))$.
\end{Lem}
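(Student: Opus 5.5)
The statement to prove is Lemma \ref{bijection}: $h(x)\mapsto R[x]/(h(x))$ is a bijection from $\mathcal{E}_c$ onto the isomorphism classes of indecomposable direct summands of the $A_c$-module $R^n$. The plan is to read everything off the decomposition $(\star)$ of $R^n$ together with the structure of modules over $A_c\simeq U_1\times\cdots\times U_{l_c}$.

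\textbf{Well-definedness and image.} By $(\star)$, the multiset of elementary divisors is $\tilde{\mathcal{E}}_c=\{\{f_i(x)^{e_{ij}}\}\}$. So every $h(x)\in\mathcal{E}_c$ has the form $f_i(x)^{e_{ij}}$, and $R[x]/(h(x))$ is literally one of the summands in $(\star)$.

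\textbf{Each summand is indecomposable.} Consider $R[x]/(f_i(x)^{e})$ with $e\le n_i$.
\begin{itemize}
\item It is a $U_i$-module, hence an $A_c$-module via the projection $A_c\to U_i$.
\item It is cyclic over the local ring $R[x]/(f_i(x)^{e})$, so it is local, hence indecomposable.
\end{itemize}

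\textbf{Surjectivity.} By Krull--Schmidt over the finite-dimensional algebra $A_c$, any indecomposable direct summand of $R^n$ is isomorphic to one of the summands in $(\star)$. Each such summand is $R[x]/(h(x))$ for some $h(x)\in\mathcal{E}_c$, so $\pi$ is surjective.

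\textbf{Injectivity (the only real point).} Suppose $R[x]/(f_i(x)^{e})\simeq R[x]/(f_k(x)^{e'})$ as $A_c$-modules. Compare annihilators in $R[x]$; the annihilator is an isomorphism invariant, computed through $R[x]\to A_c$. This gives $(f_i(x)^{e})=(f_k(x)^{e'})$. Since all $f$'s are monic irreducible, unique factorization yields $f_i=f_k$ and $e=e'$. Alternatively, the block idempotent of $U_i$ determines $i$, and the composition length $e$ over the local algebra $U_i$ determines $e$.

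Hence $\pi$ is a bijection. There is no substantial obstacle; the only care needed is to identify isomorphism classes rather than the summands themselves, which is exactly what passing from the multiset $\tilde{\mathcal{E}}_c$ to the set $\mathcal{E}_c$ accomplishes.
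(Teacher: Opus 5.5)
Your proposal is correct and follows essentially the same route as the paper, which records this lemma simply as an immediate consequence of the decomposition $(\star)$ together with the identification of $\tilde{\mathcal{E}}_c$ with the multiset $\{\{f_i(x)^{e_{ij}}\}\}$. You spell out the details the paper leaves implicit: indecomposability of each local summand, Krull--Schmidt for surjectivity, and comparison of annihilators for injectivity.
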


The singular categories of centralizer matrix algebras can be characterized as follows.

\begin{Theo}\label{scocma}
For $c\in M_n(R)$, we have the triangle equivalence of triangulated $R$-categories:
$$\Ds{S_n(c,R)}\simeq \prod_{f(x)\in \mathcal{I}_c}\prod_{j\in \mathcal{D}_{P_c(f(x))}}\stmc{R[x]/(f(x)^j)}.$$
\end{Theo}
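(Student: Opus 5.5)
The plan is to reduce Theorem~\ref{scocma} to Proposition~\ref{scoeog} through the block decomposition of $S_n(c,R)$ given just before the statement. We have $S_n(c,R)\simeq\prod_{i=1}^{l_c}A_i$ with $A_i=\End_{U_i}(M_i)$ and $U_i=R[x]/(f_i(x)^{n_i})$. The first step is the standard fact that singularity categories respect finite products of algebras: $\Db{\Lambda_1\times\Lambda_2}\simeq\Db{\Lambda_1}\times\Db{\Lambda_2}$, and under this equivalence $\Kb{\prj{(\Lambda_1\times\Lambda_2)}}$ corresponds to $\Kb{\prj{\Lambda_1}}\times\Kb{\prj{\Lambda_2}}$. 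Hence the Verdier quotient splits, giving
\[
\Ds{S_n(c,R)}\simeq\prod_{i=1}^{l_c}\Ds{A_i}
\]
as triangulated $R$-categories. Alternatively, one can use Lemma~\ref{acstb} on the summands coming from the central idempotents.

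Next I apply Proposition~\ref{scoeog} to each block. Fix $i$ and let $T_i:=P_c(f_i(x)^{n_i})$ be the set of distinct power indices. The module $M_i$ is built from the multiset $\tilde P_c$, so repeated summands occur. Since $\End_{U_i}(M_i)$ is Morita equivalent to $\End_{U_i}(\mathcal{B}(M_i))$, and singularity categories are Morita invariant, we may replace $M_i$ by $\bigoplus_{t\in T_i}R[x]/(f_i(x)^t)$. The maximal element of $T_i$ is $n_i$, so $M_i$ is a generator over $U_i$ and Proposition~\ref{scoeog} applies with $A=U_i$ and $T=T_i$. It gives
\[
\Ds{A_i}\simeq\prod_{j\in\mathcal{D}_{T_i}}\stmc{R[x]/(f_i(x)^j)}.
\]
By the definition of $P_c$ on irreducible polynomials, $P_c(f_i(x))=T_i$.

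Finally I match the index sets. By fact (ii) in Remark~\ref{rber}, $\mathcal{D}_{P_c(f_i(x)^{n_i})}=\emptyset$ exactly when $|P_c(f_i(x)^{n_i})|=\max P_c(f_i(x)^{n_i})$, that is, exactly when $f_i(x)\notin\mathcal{I}_c$. So the blocks with $f_i\notin\mathcal{I}_c$ contribute the zero category and may be dropped. The remaining blocks are indexed bijectively by $\mathcal{I}_c$ via $f_i(x)\mapsto f_i(x)$, because the $f_i$ are distinct and each $f(x)\in\mathcal{I}_c$ arises from a unique element $f(x)^{n_i}$ of $\mathcal{M}_c$. Putting these together yields the product in the statement.

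I expect the main obstacle to be only bookkeeping rather than anything substantive. The points that need care are the Morita reduction from the multiset $\tilde P_c$ to the set $P_c$, and checking that the triangulated structure on a product of Verdier quotients agrees with the quotient of the product. Both are routine, and I would state the product compatibility as a short lemma.
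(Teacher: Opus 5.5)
Your proposal is correct and follows the paper's proof essentially step for step. Both use the block decomposition $S_n(c,R)\simeq\prod_i\End_{U_i}(M_i)$, the Morita reduction to the basic module $\mathcal{B}(M_i)$, Proposition~\ref{scoeog} applied to each block, and then discard the blocks with $\mathcal{D}_{P_c(f_i(x))}=\emptyset$. The only difference is that you state explicitly that singularity categories are compatible with finite products of algebras, which the paper uses without comment.
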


{\it Proof.} Clearly, for $i\in [l_c]$, we have $P_c(f_i(x))=\{e_{ij}\mid j\in [s_i]\}$ and $\mathcal{B}(M_i)\simeq \bigoplus_{r\in {P_c(f_i(x))}} R[x]/(f_i(x)^r)$ as $U_i$-modules. Since $\End_{U_i}(M_i)$ and $\End_{U_i}(\mathcal{B}(M_i))$ are Morita equivalent, $\Ds{\End_{U_i}(M_i)}\simeq \Ds{\End_{U_i}(\mathcal{B}(M_i))}$ as triangulated $R$-categories. By Proposition \ref{scoeog}, we know
$$\Ds{\End_{U_i}(M_i)}\simeq \Ds{\End_{U_i}(\mathcal{B}(M_i))}\simeq \prod_{j\in \mathcal{D}_{P_c(f_i(x))}}\stmc{R[x]/(f_i(x)^j)}$$ as triangulated $R$-categories. Thus $\Ds{S_n(c,R)}\simeq \prod_{i=1}^{l_c}\prod_{j\in \mathcal{D}_{P_c(f_i(x))}}\stmc{R[x]/(f_i(x)^j)}$ as triangulated $R$-categories. If  $|P_c(f_i(x))|=\max \{j\in P_c(f_i(x))\}$ for $i\in[l_c]$, then $\mathcal{D}_{P_c(f_i(x))}=\emptyset$. So we get the  triangle equivalence of triangulated $R$-categories:
$$\prod_{i=1}^{l_c}\prod_{j\in \mathcal{D}_{P_c(f_i(x))}}\stmc{R[x]/(f_i(x)^j)}\simeq \prod_{f(x)\in \mathcal{I}_c}\prod_{j\in \mathcal{D}_{P_c(f(x))}}\stmc{R[x]/(f(x)^j)}. \quad \square$$

\subsection{Singular equivalences }\label{sec5}
In this subsection, we study equivalences of the singularity categories of centralizer matrix algebras.

Firstly, we recall the Auslander-Reiten quiver of stable module category of an Artin algebra $\Lambda$. Let $\Gamma_{\Lambda}$ denote the Auslander-Reiten quiver of $\Lambda$, and let $\Gamma^s_{\Lambda}$ be the subquiver  of $\Gamma_{\Lambda}$ obtained from $\Gamma_{\Lambda}$ by removing all vertices corresponding to indecomposable projective modules and all arrows starting with or ending at projective vertices. For the local, symmetric Nakayama algebra $A=R[x]/(f(x)^n)$ with $f(x)$ an irreducible polynomial and $n\ge 2$ an integer, the quiver $\Gamma^s_{A}$ of $A$ is a connected quiver with $n-1$ vertices.

We note the following immediate consequence of \cite[Lemma X.1.2(d), p.336]{ars}.

\begin{Lem}\label{preserve} Suppose that $G$ is a stable equivalence between Artin algebras $\prod^{s}_{i=1}C_i$ and $\prod^{t}_{j=1}D_j$, where $C_i$ and $D_j$ are indecomposable non-semisimple algebras for $i\in [s]$ and $j\in [t]$. Suppose that the quivers $\Gamma^s_{C_i}$ and $\Gamma^s_{D_j}$ are connected for all $i\in [s]$ and all $j\in [t]$. Then $G$ preserves non-semisimple blocks, and therefore $s=t$.
\end{Lem}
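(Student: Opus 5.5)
Since $G\colon \stmc{\Lambda}\to\stmc{\Lambda'}$ is an equivalence, with $\Lambda:=\prod_{i=1}^s C_i$ and $\Lambda':=\prod_{j=1}^t D_j$, the natural route is through the stable Auslander--Reiten quivers. First recall that $\stmc{\Lambda}=\prod_{i}\stmc{C_i}$. The indecomposable non-projective $\Lambda$-modules are exactly the indecomposable non-projective $C_i$-modules, for the various $i$. Hence $\Gamma^s_\Lambda$ is the disjoint union of the $\Gamma^s_{C_i}$. Each $\Gamma^s_{C_i}$ is nonempty, because $C_i$ is non-semisimple and so has an indecomposable non-projective module. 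By hypothesis each $\Gamma^s_{C_i}$ is connected. Therefore the connected components of $\Gamma^s_\Lambda$ are precisely the $s$ quivers $\Gamma^s_{C_i}$, one per non-semisimple block. The same holds for $\Gamma^s_{\Lambda'}$, with $t$ components $\Gamma^s_{D_j}$.

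Next I would invoke \cite[Lemma X.1.2(d), p.336]{ars}. I read it as saying that a stable equivalence induces a bijection on isoclasses of indecomposable non-projective modules, and that this bijection preserves irreducible maps in the stable category. It therefore gives an isomorphism of translation quivers $\Gamma^s_\Lambda\simeq\Gamma^s_{\Lambda'}$ (for algebras without nodes; otherwise at least an isomorphism of the underlying graphs, which is all we need). In particular $G$ gives a bijection between connected components. By the first paragraph this is a bijection $\{C_1,\dots,C_s\}\to\{D_1,\dots,D_t\}$, so $s=t$.

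To say $G$ \emph{preserves blocks} more precisely, I would also check that $G$ sends $\stmc{C_i}$ into a single $\stmc{D_{(i)\sigma}}$. The obstacle is that connectedness of the quiver does not automatically capture all of $\stmc{C_i}$ when $C_i$ is not of finite type. I would argue directly in the stable category instead. For an indecomposable $X\in\stmc{C_i}$, the image $G(X)$ is indecomposable, since $G$ is an equivalence and stable endomorphism rings of indecomposable non-projectives are local. So $G(X)$ lies in a single block $\stmc{D_j}$. Two indecomposables joined by an irreducible map have images joined by an irreducible map, hence lying in the same block; connectedness of $\Gamma^s_{C_i}$ then forces all vertices of $\Gamma^s_{C_i}$ into one $D_j$. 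Vertices are exactly the indecomposables, so every indecomposable of $\stmc{C_i}$ goes into $\stmc{D_j}$. Additivity of $G$ extends this to all objects. Applying the same argument to a quasi-inverse gives the bijection $\sigma$ and the restricted equivalences $\stmc{C_i}\simeq\stmc{D_{(i)\sigma}}$.

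The step I expect to need the most care is the use of \cite{ars} to transfer irreducible maps. Should that be delicate, I would replace it by a purely categorical criterion for when two indecomposables lie in the same block, namely the existence of a zig-zag of nonzero stable morphisms between them. Such zig-zags are preserved by any equivalence, and in $\stmc{\prod_iC_i}$ there are no nonzero morphisms between distinct blocks.
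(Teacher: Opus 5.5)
Your proof is correct and follows the paper's route. The paper gives no argument beyond citing \cite[Lemma X.1.2(d), p.336]{ars} for the fact that a stable equivalence preserves irreducible maps between indecomposable non-projective modules; your identification of the connected components of $\Gamma^s$ with the non-semisimple blocks, the block-by-block transfer, and the use of a quasi-inverse simply spell out why this is ``immediate''. For your zig-zag fallback, the one step you should state explicitly is that irreducible maps between indecomposable non-projectives are nonzero in the stable category, because any map $A\to P\to B$ through a projective $P$ lies in $\rad^2(A,B)$.
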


Singular equivalences of centralizer matrix algebras can be characterized as follows.

\begin{Theo}\label{seocma}
Let $R$ be a field, $c\in M_n(R)$ and $d\in M_m(R)$. Then the following are equivalent.

$(i)$ $\Ds{S_n(c,R)}\simeq \Ds{S_m(d,R)}$ as triangulated $R$-categories.

$(ii)$ $\Ds{S_n(c,R)}\simeq \Ds{S_m(d,R)}$ as $R$-categories.

$(iii)$ $c$ and $d$ are $Sg$-equivalent.
\end{Theo}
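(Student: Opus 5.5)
The plan runs through the explicit description in Theorem \ref{scocma}. This gives a triangle equivalence
$$\Ds{S_n(c,R)}\simeq \prod_{f(x)\in \mathcal{I}_c}\prod_{j\in \mathcal{D}_{P_c(f(x))}}\stmc{R[x]/(f(x)^j)},$$
and similarly for $d$. Every factor has $j\ge 2$, since $\mathcal{D}$ removes the $1$'s. The implication (i) $\Ra$ (ii) is trivial.

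For (iii) $\Ra$ (i), I would regroup the factors along the classes $\mathcal{I}_{c,i}$. Take $f(x)\in\mathcal{I}_{c,i}$, so $R[x]/(f(x))\simeq Q_{c,i}\simeq R[x]/(p_{c,i}(x))$. I claim $R[x]/(f(x)^j)\simeq R[x]/(p_{c,i}(x)^j)$ as $R$-algebras for each $j$. Over an arbitrary field this is the delicate point, since a complete local algebra need not be a truncated polynomial ring over its residue field when the residue field is inseparable. I would therefore invoke Lemma \ref{St-i} instead: it gives the triangle equivalence $\stmc{R[x]/(f(x)^j)}\simeq\stmc{R[x]/(p_{c,i}(x)^j)}$ directly, because $(3)\Ra(1)$ there only requires equal exponents and isomorphic tops.

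Consequently $\Ds{S_n(c,R)}\simeq\prod_{i\in[r_c]}\prod_{j\in\mathcal{D}_{c,i}}\stmc{R[x]/(p_{c,i}(x)^j)}$, as a multiset union. The same holds for $d$. The permutation $\sigma$ supplied by $Sg$-equivalence matches these factors one by one, up to triangle equivalence, and taking products of triangle equivalences gives (i).

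For (ii) $\Ra$ (iii), suppose $F$ is an $R$-linear equivalence between the two products. Each factor is $\stmc{C}$ with $C=R[x]/(f(x)^j)$, $j\ge2$, which is local, symmetric and indecomposable, and $\Gamma^s_C$ is connected with $j-1$ vertices. The key step is to show that $F$ sends each factor onto a factor. I would argue intrinsically: the product decomposition corresponds to the decomposition of the category into connected components. Here a component is a class of indecomposables linked by chains of nonzero non-isomorphism morphisms. This is preserved by any $R$-equivalence, and each $\stmc{C}$ is connected by that connectivity fact. Alternatively, each side is the stable category of a product of such algebras, so Lemma \ref{preserve} applies directly.

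We then obtain a bijection between the multisets of factors with $\stmc{R[x]/(f(x)^j)}\simeq\stmc{R[x]/(g(x)^{j'})}$ as $R$-categories. Lemma \ref{St-i} $(2)\Ra(3)$ turns this into $j=j'$ and $R[x]/(f(x))\simeq R[x]/(g(x))$. Grouping the factors by the isomorphism class of the top $R[x]/(f(x))$ gives exactly the classes $\mathcal{I}_{c,i}$. Hence the classes correspond bijectively, with $r_c=r_d$ and $Q_{c,i}\simeq Q_{d,(i)\sigma}$, and the exponent multisets agree: $\mathcal{D}_{c,i}=\mathcal{D}_{d,(i)\sigma}$.

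One subtlety needs checking. A class $\mathcal{I}_{c,i}$ could in principle contribute no factors, but by definition every $f\in\mathcal{I}_c$ has $\mathcal{D}_{P_c(f)}\neq\emptyset$ by Remark \ref{rber}(ii). So the classes are recoverable from the factors and $r_c$ is the number of distinct top classes.

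The main obstacle is the block-preservation step: justifying that an equivalence of mere $R$-categories respects the product decomposition, with multiplicities. I would handle it with the connectivity argument via Lemma \ref{preserve}.
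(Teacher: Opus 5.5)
Your proposal is correct and follows the paper's own proof. Like the paper, you reduce via Theorem \ref{scocma} to products of stable categories of truncated polynomial algebras, match factors with Lemma \ref{preserve}, and compare them in both directions with Lemma \ref{St-i}. Two small points: your check that each class $\mathcal{I}_{c,i}$ contributes a nonempty multiset $\mathcal{D}_{c,i}$ (so $r_c$ is recoverable from the factors) makes explicit a step the paper leaves implicit, and your worry about the algebra isomorphism $R[x]/(f(x)^j)\simeq R[x]/(p_{c,i}(x)^j)$ is unnecessary, since Lemma \ref{St-i}$(3)\Ra(4)$ already provides it.
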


{\it Proof.} By Theorem \ref{scocma}, we have the equivalences of triangulated $R$-categories:
$$\Ds{S_n(c,R)}\simeq \prod_{f(x)\in \mathcal{I}_c}\prod_{i\in \mathcal{D}_{P_c(f(x))}}\stmc{R[x]/(f(x)^i)}\ \text{ and }\  \Ds{S_m(d,R)}\simeq \prod_{g(x)\in \mathcal{I}_d}\prod_{j\in \mathcal{D}_{P_d(g(x))}}\stmc{R[x]/(g(x)^j)}.$$

(i) $\Ra$ (ii) This is clear since triangulated $R$-categories are $R$-categories.

(ii) $\Ra$ (iii) Suppose $\Ds{S_n(c,R)}\simeq \Ds{S_m(d,R)}$ as $R$-categories. Then the algebras
$$\prod_{f(x)\in \mathcal{I}_c}\prod_{i\in \mathcal{D}_{P_c(f(x))}}R[x]/(f(x)^i) \text{ and } \prod_{g(x)\in \mathcal{I}_d}\prod_{j\in \mathcal{D}_{P_d(g(x))}}R[x]/(g(x)^j)$$ are stably equivalent. Suppose $$F:\stmc{\prod_{f(x)\in \mathcal{I}_c}\prod_{i\in \mathcal{D}_{P_c(f(x))}}R[x]/(f(x)^i)}\lra \stmc{\prod_{g(x)\in \mathcal{I}_d}\prod_{j\in \mathcal{D}_{P_d(g(x))}}R[x]/(g(x)^j)}$$ is a stable equivalence. Since $R[x]/(h(x)^i)$ is not a semisimple algebra for any irreducible polynomial $h(x)\in R[x]$ and integer $i\ge 2$, it follows from Lemma \ref{preserve} that $F$ preserves non-semisimple blocks, that is, for $f(x)\in \mathcal{I}_c$ and $i\in \mathcal{D}_{P_c(f(x))}$, there exist an unique $g_{_f}(x)\in \mathcal{I}_d$ and $j_i\in \mathcal{D}_{P_d(g_{_f}(x))}$ such that $\stmc{R[x]/(f(x)^i)}$ and $\stmc{R[x]/(g_{_f}(x)^{j_i})}$ are equivalent as $R$-categories. This implies that $R[x]/(f(x))\simeq R[x]/(g_{_f}(x))$ as $R$-algebras and $i=j_i$ by Lemma \ref{St-i}. Thus $r_c=r_d$ and there is a permutation $\sigma\in \Sigma_{r_c}$ such that $Q_{c,i}\simeq Q_{d,(i)\sigma}$ as $R$-algebras and $\mathcal{D}_{c,i}=\mathcal{D}_{d,(i)\sigma}$. By definition, $c$ and $d$ are $Sg$-equivalent.

(iii) $\Ra$ (i) Suppose that $c$ and $d$ are $Sg$-equivalent, that is, $r_c=r_d$ and there is a permutation $\sigma\in \Sigma_{r_c}$ such that $Q_{c,i}:=R[x]/(p_{c,i}(x))\simeq Q_{d,(i)\sigma}:=R[x]/(p_{d,(i)\sigma}(x))$ as $R$-algebras and $\mathcal{D}_{c,i}=\mathcal{D}_{d,(i)\sigma}$ for $i\in [r_c]$ (see Section \ref{sect3} for notation). For $i\in [r_c]$, we deduce from  Lemma \ref{St-i} that the following triangle equivalences of triangulated $R$-categories hold:
$$\stmc{R[x]/(f(x)^s)}\simeq\stmc{R[x]/(p_{c,i}(x)^s)} \simeq \stmc{R[x]/(p_{d,(i)\sigma}(x)^s)}\simeq \stmc{R[x]/(g(x)^s)}$$ for $f(x)\in \mathcal{I}_{c,i}, g(x)\in \mathcal{I}_{d,(i)\sigma}$ and $s\in \mathbb{N}$. It follows from $\mathcal{D}_{c,i}=\mathcal{D}_{d,(i)\sigma}$ for $i\in [r_c]$ that
\begin{align*}
\Ds{S_n(c,R)}&\simeq \prod_{f(x)\in \mathcal{I}_c}\prod_{i\in \mathcal{D}_{P_c(f(x))}}\stmc{R[x]/(f(x)^i)}\simeq \prod_{i\in[r_c]}\prod_{j \in \mathcal{D}_{c,i}}\stmc{R[x]/(p_{c,i}(x)^j)}\\
&\simeq \prod_{i\in[r_c]}\prod_{j \in \mathcal{D}_{d,(i)\sigma}}\stmc{R[x]/(p_{d,(i)\sigma}(x)^j)},\ \text{ and }
\end{align*}
$$\Ds{S_m(d,R)}\simeq \prod_{g(x)\in \mathcal{I}_d}\prod_{j\in \mathcal{D}_{P_d(g(x))}}\stmc{R[x]/(g(x)^j)}\simeq \prod_{i\in[r_c]}\prod_{j\in \mathcal{D}_{d,i}}\stmc{R[x]/(p_{d,i}(x)^j)}$$
as triangulated $R$-categories. Hence $\Ds{S_n(c,R)}\simeq \Ds{S_m(d,R)}$ as triangulated $R$-categories. $\square$

\medskip
As a strong version of singular equivalences of algebras, we point out the following result about when centralizer matrix algebras are isomorphic.

\begin{Prop}  Let $R$ be a field, $c\in M_n(R)$ and $d\in M_m(R)$. Then

$S_n(c,R)\simeq S_m(d,R)$ as $R$-algebras if and only if $c$ and $d$ are $I$-equivalent.
\end{Prop}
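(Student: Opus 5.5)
We use the block decomposition recorded before Lemma \ref{bijection}: $S_n(c,R)\simeq\prod_{i=1}^{l_c}A_i$ with $A_i=\End_{U_i}(M_i)$, $U_i=R[x]/(f_i(x)^{n_i})$, $M_i=\bigoplus_{j\in\tilde P_c(f_i(x)^{n_i})}R[x]/(f_i(x)^j)$. Similarly $S_m(d,R)\simeq\prod_{k=1}^{l_d}B_k$ with $B_k=\End_{V_k}(N_k)$, $V_k=R[x]/(g_k(x)^{m_k})$ and $N_k$ built from $\tilde P_d(g_k(x)^{m_k})$. The idea is to reduce both directions to Lemma \ref{ecai}, applied blockwise.

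For ``if'': suppose $\pi:\mathcal{M}_c\to\mathcal{M}_d$ is a bijection realizing $c\stackrel{I}\sim d$. Then $l_c=l_d$, and after renumbering $\pi(f_i(x)^{n_i})=g_i(x)^{m_i}$. By definition $R[x]/(f_i(x))\simeq R[x]/(g_i(x))$ and $\tilde P_c(f_i(x)^{n_i})=\tilde P_d(g_i(x)^{m_i})$. The maximal elements of these multisets are $n_i$ and $m_i$, so $n_i=m_i$. We need $R[x]/(f_i^{n_i})\simeq R[x]/(g_i^{n_i})$ as $R$-algebras; this follows from Lemma \ref{St-i} ((3)$\Rightarrow$(4)) when $n_i\ge2$, and trivially when $n_i=1$. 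Lemma \ref{ecai} ((2)$\Rightarrow$(1)) then gives $A_i\simeq B_i$, and taking products gives $S_n(c,R)\simeq S_m(d,R)$.

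For ``only if'': an $R$-algebra isomorphism $S_n(c,R)\simeq S_m(d,R)$ maps central primitive idempotents to central primitive idempotents. Each $A_i$ is indecomposable as an algebra (it is the endomorphism algebra of a module over a local algebra with $M_i\neq 0$; its projectives are all linked, as the Cartan matrix of Lemma \ref{cmoe} has no zero entries). Hence $l_c=l_d$, and after renumbering $A_i\simeq B_i$ as $R$-algebras. Lemma \ref{ecai} ((1)$\Rightarrow$(2)) applies to the multisets $I=\tilde P_c(f_i^{n_i})$ and $J=\tilde P_d(g_i^{m_i})$. It yields $I=J$, so $n_i=m_i$, and $R[x]/(f_i^{n_i})\simeq R[x]/(g_i^{n_i})$. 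Modding out by the radical gives $R[x]/(f_i)\simeq R[x]/(g_i)$. Defining $\pi(f_i^{n_i}):=g_i^{n_i}$ gives the required bijection $\mathcal{M}_c\to\mathcal{M}_d$.

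The main obstacle is the block-matching step: we must know that the blocks are exactly the $A_i$, and that an algebra isomorphism permutes blocks, which is standard via centers. The remaining bookkeeping is that $\tilde P_c$ is indexed by the same multiset used in the block decomposition of $R^n$ given in $(\star)$; this holds by the identification $\{\{f_i^{e_{ij}}\}\}=\tilde{\mathcal{E}}_c$.
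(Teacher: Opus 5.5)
Your proof is correct, and it is the argument the paper's setup points to. The paper states this proposition without a written proof, but it records the block decomposition $S_n(c,R)\simeq\prod_i\End_{U_i}(M_i)$ and proves Lemma \ref{ecai}, which is used nowhere else; applying that lemma blockwise is exactly what you do.

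One small remark. In the definition of $I$-equivalence, the condition $R[x]/(f(x))\simeq R[x]/((f(x))\pi)$ is imposed on the maximal elementary divisor $f(x)=f_i(x)^{n_i}$ itself, not on its irreducible factor. So in the ``if'' direction the isomorphism $R[x]/(f_i(x)^{n_i})\simeq R[x]/(g_i(x)^{n_i})$ is given outright, and your detour through Lemma \ref{St-i} is unnecessary, though harmless.
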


\medskip
Though Morita, derived and stable equivalences preserve the number of non-semisimple blocks of centralizer matrix algebras, singular equivalences may not, in general,  preserve the number of non-semisimple blocks of centralizer matrix algebras.

\begin{Bsp}\label{senpnob}{\rm
Let $R$ be a field and $J_n(\lambda)$ the $n\times n$ Jordan matrix with the eigenvalue $\lambda\in R$. We take $c=J_2(0)\oplus J_4(0)\in M_6(R)$ and $d=J_2(0)\oplus J_2(1)\in M_4(R)$. Then $m_c(x)=x^4, m_d(x)=x^2(x-1)^2$, $\mathcal{E}_c=\{x^2,x^4\}, \mathcal{E}_d=\{x^2,(x-1)^2\}=\mathcal{M}_d=\mathcal{R}_d, \mathcal{M}_c=\{x^4\}=\mathcal{R}_c, \mathcal{I}_c=\{x\}, \mathcal{I}_d=\{x,x-1\}, r_c=1=r_d, Q_{c,1}\simeq R[x]/(x)\simeq Q_{d,1}$ and $\mathcal{D}_{c,1}=\{2,2\}=\mathcal{D}_{d,1}$. Thus $c$ and $d$ are $Sg$-equivalent, but not $D$-equivalent nor $S$-equivalent. By Theorem \ref{seocma}, $S_6(c,R)$ and $S_4(d,R)$ are singularly equivalent, but they are neither derived equivalent by \cite[Theorem 1.1]{lx1}, nor stably equivalent by \cite[Theorem 1.1]{lx2}.

In fact, we have $S_6(c,R)\simeq \End_{R[x]/(x^4)}(R[x]/(x^4)\oplus R[x]/(x^2))$ is indecomposable, while $S_4(d,R)\simeq R[x]/(x^2)\times R[x]/((x-1)^2)$ has two blocks of infinite global dimension. This shows that singularly equivalent centralizer matrix algebras may have different numbers of non-semisimple blocks in general, but stable equivalent centralizer matrix algebras have the same number of non-semisimple blocks (see \cite[Proposition 4.8]{lx2}).
}
\end{Bsp}

We may have a concise method to decide singular equivalences of some centralizer matrix algebras.
\begin{Bsp}\label{em-degree1}{\rm
Let $R$ be a field and $\lambda, \mu\in R$. Suppose that $H:=\{m_i\mid 1\le i\le s\}$ with $m_1>m_2>\cdots>m_s$ and $T:=\{n_j\mid 1\le j\le t\}$ with $n_1>n_2>\cdots>n_t$ are two sets of positive integers. We consider $A:=R[x]/((x-\lambda)^{m_1}), \Lambda:=\End_A(\bigoplus_{1\le i\le s} R[x]/((x-\lambda)^{m_i})$; $B:=R[x]/((x-\mu)^{n_1})$, and $\Gamma:=\End_B(\bigoplus_{1\le j\le t} R[x]/((x-\mu)^{n_j})$. By Theorem \ref{seocma}, $\Lambda$ and $\Gamma$ are singularly equivalent if and only if $\mathcal{D}_H=\mathcal{D}_T$.
}
\end{Bsp}

\section{Singular equivalences for permutation matrices}\label{sec6}
In this section, we study singular equivalences between centralizer matrix algebras of permutation matrices, and prove Theorem \ref{thm2}.

Throughout this subsection, $R$ denotes a field of characteristic $p \ge 0$.

Let $\sigma = \sigma_1 \cdots \sigma_s \in \Sigma_n$ and $\tau = \tau_1 \cdots \tau_t \in \Sigma_m$ be products of disjoint cycles, with cycle types $\lambda = (\lambda_1, \ldots, \lambda_s)$ and $\mu = (\mu_1, \ldots, \mu_t)$, respectively, where $\lambda_i \ge 1$ and $\mu_j \ge 1$ for all $i \in [s], j\in [t]$.

If $p>0$,  then a cycle $\sigma_i$ is said to be \emph{$p$-regular} if $p\nmid \lambda_i$, and \emph{$p$-singular} if $p\mid \lambda_i$. If $p=0$, all cycles are regarded as $p$-regular cycles. Let $r(\sigma)$ (respectively, $s(\sigma)$) be the product of the $p$-regular (respectively, $p$-singular) cycles of $\sigma$. We consider both $r(\sigma)$ and $s(\sigma)$ as elements in $\Sigma_n$. The permutation $\sigma$ is said to be \emph{$p$-regular} (or \emph{$p$-singular}) if  $\sigma = r(\sigma)$ (or $\sigma=s(\sigma)$).
For $p=0$, we have $r(\sigma)=\sigma$ and $s(\sigma)=id$, the identity permutation.

Let $c_{\sigma}:=\sum_{i=1}^ne_{i,(i)\sigma}\in M_n(R)$ denote the \emph{permutation matrix} of $\sigma$ over $R$, where $e_{ij}$ is the matrix with $1$ in $(i,j)$-entry and $0$ in all other entries.

\medskip
Our main result about the singularity categories of permutation matrices reads as follows.

\begin{Theo}\label{seispop}
Suppose $p\neq 2$ or $p=2$ and $\nu_p(\lambda_i)\neq 1\neq \nu_p(\mu_{j})$ for all $i\in [s]$ and $j\in[t]$. If $S_n(c_{\sigma},R)$ and $S_m(c_{\tau},R)$ are singularly equivalent, then so are $S_n(c_{s(\sigma)},R)$ and $S_m(c_{s(\tau)},R)$.
\end{Theo}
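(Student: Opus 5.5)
The plan is to use Theorem \ref{seocma} to reduce everything to combinatorics of elementary divisors. I will compute the $Sg$-data of $c_\sigma$ explicitly and show that the $Sg$-data of $c_{s(\sigma)}$ is obtained from it by a fixed elementwise rule on multisets that depends only on $p$. Then $c_\sigma\stackrel{Sg}\sim c_\tau$ will force $c_{s(\sigma)}\stackrel{Sg}\sim c_{s(\tau)}$.

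\emph{Step 1: elementary divisors.} Write each cycle length as $\lambda_i=p^{a_i}k_i$ with $p\nmid k_i$ (for $p=0$ take $a_i=0$). The cycle $\sigma_i$ contributes the $R[x]$-module $R[x]/(x^{\lambda_i}-1)$. Since $x^{k_i}-1$ is separable, we have $x^{\lambda_i}-1=(x^{k_i}-1)^{p^{a_i}}=\prod_{g\mid x^{k_i}-1}g^{p^{a_i}}$ with distinct irreducibles $g$. Hence for every irreducible $g$,
$$P_{c_\sigma}(g)=\{p^{a_i}\mid g \text{ divides } x^{k_i}-1\},$$
which is a set of $p$-powers. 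The permutation $s(\sigma)\in\Sigma_n$ consists of the $p$-singular cycles together with fixed points. So $P_{c_{s(\sigma)}}(g)$ equals $P_{c_\sigma}(g)\setminus\{1\}$, except that $1$ is kept for $g=x-1$ when fixed points remain. Since $1\in T$ is the minimum of $T$, removing or keeping it changes $\mathcal{D}_T$ only through the last entry of $\mathcal{H}_T$. This is the mechanism used in Step 2.

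\emph{Step 2: an elementwise rule on $\mathcal{D}$.} Let $T=\{p^{m_1}>\cdots>p^{m_\ell}\}$. The entries of $\mathcal{H}_T$ are the differences $p^{m_j}-p^{m_{j+1}}$ and the last entry $p^{m_\ell}$.
\begin{itemize}
\item If $m_\ell=0$, then the last difference is $p^{m_{\ell-1}}-1$ and the last entry $1$ is discarded in $\mathcal{D}_T$.
\item If $m_\ell>0$, then $p^{m_\ell}$ itself appears in $\mathcal{D}_T$.
\end{itemize}
Thus $\mathcal{D}_{T\setminus\{1\}}$ is obtained from $\mathcal{D}_{T}$ by replacing the unique entry of the form $p^{a}-1$ by $p^{a}$. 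I then want to show that, inside any union of such multisets, the entries of the form $p^a-1$ with $a\ge1$ are exactly these distinguished ones. Every other entry has the form $p^k(p^{j-k}-1)$ with $k\ge1$ (divisible by $p$) or is a $p$-power $p^{m}$ with $m\ge1$. By Lemma \ref{slopop}, neither can equal $p^a-1$. The only failures are $p=2$ with some exponent equal to $1$, where $2-1=1$ and $4-2=2$ collide. The hypothesis $\nu_2(\lambda_i)\neq1\neq\nu_2(\mu_j)$ rules this out.

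So the rule $\varphi$, \emph{replace each entry $p^a-1$ by $p^a$}, satisfies $\mathcal{D}_{P_{c_{s(\sigma)}}(g)}=\varphi(\mathcal{D}_{P_{c_\sigma}(g)})$ for every $g$. Because $\varphi$ acts entrywise, it commutes with multiset unions. It follows that $\mathcal{D}_{c_{s(\sigma)},i}=\varphi(\mathcal{D}_{c_\sigma,i})$ for each class of irreducibles with isomorphic residue field $Q$. I will also check the edge cases: the special role of $g=x-1$ (where $1$ may survive in $s(\sigma)$), and classes in which $\mathcal{D}$ is empty or becomes empty. When $\mathcal{D}$ is empty the class lies outside $\mathcal{I}$ on both sides, which is consistent with Remark \ref{rber}(ii).

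\emph{Step 3: conclusion.} Assume $S_n(c_\sigma,R)$ and $S_m(c_\tau,R)$ are singularly equivalent. By Theorem \ref{seocma} we have $c_\sigma\stackrel{Sg}\sim c_\tau$, which gives a matching $\sigma'$ of classes with $Q_{c_\sigma,i}\simeq Q_{c_\tau,(i)\sigma'}$ and equal multisets $\mathcal{D}$. Applying $\varphi$, and discarding classes whose image is empty, gives the $Sg$-equivalence $c_{s(\sigma)}\stackrel{Sg}\sim c_{s(\tau)}$. A second application of Theorem \ref{seocma} then finishes the proof. I expect the main obstacle to be Step 2: making sure that no entry $p^a-1$ arises in a multiset $\mathcal{D}$ except as the difference just above a removed $1$, and handling the case $T=\{1\}$ together with the behaviour at $x-1$ carefully.
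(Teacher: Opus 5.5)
Your framework is the paper's. You compare the $\mathcal{D}$'s class by class via Theorem \ref{seocma}, isolate the entries $p^a-1$ produced by a bottom pair $\{1,p^a\}$, and use $\nu_2\neq 1$ to stop $2-1=1$ from being discarded. Recognizing those entries as exactly the ones prime to $p$ is a tidy replacement for Lemma \ref{slopop}.

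\textbf{The gap.} Your Step 2 claim that $\mathcal{D}_{P_{c_{s(\sigma)}}(g)}=\varphi(\mathcal{D}_{P_{c_\sigma}(g)})$ for \emph{every} $g$ is false at $g=x-1$, and this is not an edge case. The fixed points of $s(\sigma)$ exist exactly when $1\in P_{c_\sigma}(x-1)$, so $P_{c_{s(\sigma)}}(x-1)=P_{c_\sigma}(x-1)$ always. Whenever $\sigma$ has both $p$-regular and $p$-singular cycles, the entry $p^k-1$ of $\mathcal{D}_{P_{c_\sigma}(x-1)}$ survives unchanged in $s(\sigma)$. Hence $\mathcal{D}_{c_{s(\sigma)},i}\neq\varphi(\mathcal{D}_{c_\sigma,i})$ for the class $i$ of linear polynomials, and Step 3 breaks for that class. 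A priori, $x-1$ and another linear factor could contribute $p^k-1$ and $p^{k'}-1$ for $\sigma$, with the roles reversed for $\tau$. After passing to $s(\sigma)$ and $s(\tau)$ this gives $\{\{p^k-1,p^{k'}\}\}$ versus $\{\{p^{k'}-1,p^k\}\}$, and nothing in your argument excludes it.

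\textbf{The missing ingredient.} Since $x-1$ divides every $x^{k_i}-1$, you have $P_{c_\sigma}(g)\subseteq P_{c_\sigma}(x-1)$ for every irreducible $g$. Two consequences follow.
\begin{itemize}
\item[(a)] $x-1\in\tilde{\mathcal{I}}_{c_\sigma}$ if and only if $\mathcal{U}_{c_\sigma}$ has an entry prime to $p$. This transfers to $\tau$ because $\mathcal{U}_{c_\sigma}=\mathcal{U}_{c_\tau}$.
\item[(b)] Let $p^k$ be the least element $>1$ of $P_{c_\sigma}(x-1)$. The entry $p^k-1$ contributed by $x-1$ is then the smallest entry prime to $p$ in the linear class.
\end{itemize}
The linear classes of $c_\sigma$ and $c_\tau$ must be matched with each other (since $Q\simeq R$), and their $\mathcal{D}$'s are equal, so these minima agree. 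On the linear class the correct rule is therefore: apply $\varphi$ to every entry prime to $p$ except one copy of the smallest one. With that correction your Step 3 goes through. This is exactly what the paper does in Lemma \ref{preforcor}(2)(iii), where it proves $k=k'$ in order to arrange $(x-1)\pi_j=x-1$.
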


To prove Theorem \ref{seispop}, we first establish the following lemmas on elementary divisors and power index sets of permutation matrices.

\begin{Lem}{\rm \cite[Lemma 2.17]{lx1}}\label{per}
$\mathcal{E}_{c_{\sigma}} = \{g(x)^{p^{\nu_p(\lambda_i)}}\mid i\in[s], \; g(x)~\mbox{is an irreducible factor of}~ x^{\lambda_i}-1 \}.$
\end{Lem}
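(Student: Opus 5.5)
The plan is to compute the $R[x]$-module structure of $R^n$ defined by $c_\sigma$ directly, and then read off its elementary divisors.

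\textbf{Reduction to one cycle.} Relabel $[n]$ so that the cycles $\sigma_1,\dots,\sigma_s$ act on consecutive blocks. This amounts to conjugating $c_\sigma$ by a permutation matrix, which does not change elementary divisors. Then $c_\sigma$ becomes the block diagonal matrix $c_{\sigma_1}\oplus\cdots\oplus c_{\sigma_s}$, where $c_{\sigma_i}$ is the permutation matrix of a $\lambda_i$-cycle on $\lambda_i$ points. As an $R[x]$-module, $R^n$ (with $x$ acting via $c_\sigma$) is therefore the direct sum of the modules $R^{\lambda_i}$ with $x$ acting via $c_{\sigma_i}$. The multiset of elementary divisors of a direct sum is the union of the multisets, so $\mathcal{E}_{c_\sigma}$ is the union of the sets $\mathcal{E}_{c_{\sigma_i}}$. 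It thus suffices to treat a single $\lambda$-cycle.

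\textbf{A single cycle is cyclic.} For a $\lambda$-cycle, the vector $e_1$ is cyclic for $c=c_{\sigma_i}$: the vectors $e_1, c e_1, \dots, c^{\lambda-1}e_1$ run through all standard basis vectors (for the transpose action one may also invoke Lemma \ref{cmat}, which changes nothing). Moreover $c^\lambda=I$. Hence the module is isomorphic to $R[x]/(x^\lambda-1)$; equivalently, $c$ is similar to the companion matrix $C[x^\lambda-1]$.

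\textbf{Factoring $x^\lambda-1$.} Write $\lambda=p^{k}m$ with $k=\nu_p(\lambda)$ and $p\nmid m$; if $p=0$, take $k=0$ and $m=\lambda$. In characteristic $p$ the Frobenius identity gives $x^\lambda-1=(x^m-1)^{p^k}$. The polynomial $x^m-1$ is separable, since its derivative $mx^{m-1}$ has $m\neq 0$ in $R$ and is coprime to $x^m-1$. So $x^m-1=\prod g(x)$ is a product of distinct monic irreducibles, and the irreducible factors of $x^\lambda-1$ are exactly these $g(x)$. Consequently $x^\lambda-1=\prod g(x)^{p^k}$ with pairwise coprime factors. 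The Chinese remainder theorem then gives
\[
R[x]/(x^\lambda-1)\simeq\bigoplus_g R[x]/\big(g(x)^{p^{k}}\big).
\]
Each summand $R[x]/(g(x)^{p^k})$ is indecomposable, being a quotient of a local ring; this matches the decomposition $(\star)$ preceding Lemma \ref{bijection}. So the elementary divisors of $c_{\sigma_i}$ are exactly the polynomials $g(x)^{p^{\nu_p(\lambda_i)}}$, with $g(x)$ running over the irreducible factors of $x^{\lambda_i}-1$.

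Taking the union over $i\in[s]$ yields the formula in the statement. The only point needing care is the separability of $x^m-1$ together with the Frobenius identity. Without these, the exponent of each irreducible factor could not be pinned down as exactly $p^{\nu_p(\lambda_i)}$. The rest is standard linear algebra.
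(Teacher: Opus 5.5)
Your proof is correct and follows the standard route. The paper gives no argument of its own and quotes the lemma from \cite[Lemma 2.17]{lx1}; the key identity you use, $x^{\lambda_i}-1=(x^{\lambda_i'}-1)^{p^{\nu_p(\lambda_i)}}$ with $x^{\lambda_i'}-1$ separable, is the same one the paper invokes in the proof of Corollary \ref{one-more}. One small correction: Lemma \ref{cmat} is about centralizer algebras, not elementary divisors, so it is the wrong citation for the transpose action. The relevant fact is that $c$ and $c^{tr}$ are similar, and in any case your cyclic-vector argument works directly for either action.
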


By Lemma \ref{per}, the matrix $c_\sigma$ always has a unique maximal elementary divisor of the form $(x-1)^{p^i}$ for some  $i\in \mathbb{N}$. This elementary divisor is called the \emph{exceptional} elementary divisor of $c_\sigma$.

\begin{Lem}\label{sseoiff}
If $p\neq 2$ or both $p=2$ and $P_{c_\sigma}(f(x))\neq \{1,2\}$ for all $f(x)\in \mathcal{M}_{c_\sigma}$, then

$(1)$ $s(\sigma)=id$ if and only if $r(\sigma)=\sigma$ if and only if $\mathcal{I}_{c_\sigma}=\emptyset$.

$(2)$ $\mathcal{I}_{c_{s(\sigma)}}=\mathcal{I}_{c_\sigma}$, $r_{c_{s(\sigma)}}= r_{c_{\sigma}}$ and, for $f(x)\in \mathcal{I}_{c_\sigma}$,  $$P_{c_{s(\sigma)}}(f(x))=\begin{cases}P_{c_\sigma}(f(x)), & \mbox{ if } f(x)=x-1,\\ P_{c_\sigma}(f(x))\setminus\{1\}, & \text{ otherwise}.\end{cases}$$
\end{Lem}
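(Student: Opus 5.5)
The plan is to make Lemma \ref{per} explicit in terms of each irreducible polynomial, and then read off $\mathcal{I}$ and the power index sets directly.

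\textbf{Step 1: power index sets.} Write $\lambda_i=p^{k_i}\lambda_i'$ with $k_i=\nu_p(\lambda_i)$ and $p\nmid\lambda_i'$ (for $p=0$ take $k_i=0$). Since $x^{\lambda_i}-1=(x^{\lambda_i'}-1)^{p^{k_i}}$ and $x^{\lambda_i'}-1$ is separable, Lemma \ref{per} gives, for every irreducible $g(x)$,
$$P_{c_\sigma}(g(x))=\{\,p^{k_i}\mid i\in[s],\ g(x)\mid x^{\lambda_i'}-1\,\}.$$
Thus $P_{c_\sigma}(g(x))$ is a set of powers of $p$. In particular $x-1$ divides every $x^{\lambda_i'}-1$, so $P_{c_\sigma}(x-1)=\{p^{k_i}\mid i\in[s]\}$.

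\textbf{Step 2: membership in $\mathcal{I}$.} Let $P$ be a nonempty set of powers of $p$. Then $|P|=\max P$ holds exactly when $P=\{1,\dots,\max P\}$. This forces $P=\{1\}$, or $p=2$ and $P=\{1,2\}$. Indeed, a set of powers of $p$ containing $1,2,3$ is impossible, and $\{1,2\}$ consists of powers of $p$ only when $p=2$.

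Under the hypothesis that $P_{c_\sigma}(f(x))\neq\{1,2\}$, we get: an irreducible $g(x)$ lies in $\mathcal{I}_{c_\sigma}$ if and only if $\max P_{c_\sigma}(g(x))>1$. This happens if and only if there is an index $i$ with $p\mid\lambda_i$ and $g(x)\mid x^{\lambda_i'}-1$.

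I expect this reduction to be the main point needing care. Two things must be checked. First, the hypothesis excluding $\{1,2\}$ has to be used precisely here. Second, the definition of $\mathcal{I}_c$ through maximal elementary divisors $f(x)^i\in\mathcal{M}_c$ must agree with the definition of $P_c(g(x))$ for irreducible $g(x)$.

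\textbf{Step 3: part (1).} Part (1) follows at once. The equality $s(\sigma)=id$ holds if and only if no $\lambda_i$ is divisible by $p$, which is the same as $r(\sigma)=\sigma$. By Step 2 this holds if and only if every nonempty $P_{c_\sigma}(g(x))$ equals $\{1\}$, that is, if and only if $\mathcal{I}_{c_\sigma}=\emptyset$.

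\textbf{Step 4: cycle type of $s(\sigma)$.} For (2), regard $s(\sigma)\in\Sigma_n$. Its cycles are the $p$-singular cycles $\sigma_i$ together with fixed points, which are $1$-cycles. Fixed points occur exactly when $\sigma$ has a $p$-regular cycle. Applying Step 1 to $s(\sigma)$ gives the following.

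For $g(x)=x-1$, we have $P_{c_{s(\sigma)}}(x-1)=\{p^{k_i}\mid p\mid\lambda_i\}\cup\{1\}$ if regular cycles exist, and $\{p^{k_i}\mid p\mid\lambda_i\}$ otherwise. In both cases this equals $P_{c_\sigma}(x-1)$, because regular cycles contribute only the exponent $1$.

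For $g(x)\neq x-1$, the fixed points contribute nothing, and each singular cycle contributes $p^{k_i}>1$. Hence $P_{c_{s(\sigma)}}(g(x))=P_{c_\sigma}(g(x))\setminus\{1\}$.

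\textbf{Step 5: part (2).} By Step 4, the hypothesis also holds for $s(\sigma)$: $P_{c_{s(\sigma)}}(x-1)=P_{c_\sigma}(x-1)\ne\{1,2\}$, and for $g(x)\neq x-1$ the set $P_{c_{s(\sigma)}}(g(x))$ does not contain $1$. So Step 2 applies to $s(\sigma)$. Since the criterion in Step 2 involves only the singular cycles, it yields $\mathcal{I}_{c_{s(\sigma)}}=\mathcal{I}_{c_\sigma}$, and the formula for $P_{c_{s(\sigma)}}(f(x))$ is the one in Step 4.

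Finally, $r_{c}$ counts the isomorphism classes of the algebras $R[x]/(f(x))$ for $f(x)\in\mathcal{I}_c$. As the two sets $\mathcal{I}$ coincide, $r_{c_{s(\sigma)}}=r_{c_\sigma}$.
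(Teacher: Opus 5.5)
Your proposal is correct and follows essentially the same route as the paper: Lemma~\ref{per} makes every $P_{c_\sigma}(g(x))$ a set of $p$-powers, and the observation that $|P|=\max P$ then forces $P=\{1\}$ under the hypothesis gives (1), while (2) is read off from Lemma~\ref{per} applied to the cycle type of $s(\sigma)$. You spell out more than the paper does for (2), namely that the fixed points of $s(\sigma)$ contribute only the exponent $1$, and only to $x-1$, and that the hypothesis carries over to $s(\sigma)$; the paper leaves both points implicit in ``follows from $(*)$ and Lemma~\ref{per}''.
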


{\it Proof.} Without loss of generality, we may assume $p>0$.

(1) We show the following statement:

($*$) For $f(x)\in \mathcal{M}_{c_\sigma}$, the equality $|P_{c_\sigma}(f(x))|=\max\{i\in  P_{c_\sigma}(f(x))\}$ holds if and only if $P_{c_\sigma}(f(x))=\{1\}$.

Indeed, if $P_{c_\sigma}(f(x))=\{1\}$, then $|P_{c_\sigma}(f(x))|=\max\{j\in P_{c_\sigma}(f(x))\}$. Conversely, suppose  $|P_{c_\sigma}(f(x))|=\max\{j\in P_{c_\sigma}(f(x))\}$. Then $P_{c_\sigma}(f(x))=[\,|P_{c_\sigma}(f(x))|\,]$. We write $f(x)=g(x)^j\in \mathcal{M}_{c_{\sigma}}$ with $g(x)\in R[x]$ an irreducible polynomial and $j$ the maximal power index of $g(x)$. By Lemma \ref{per}, we have $P_{c_\sigma}(f(x))=\{p^{\nu_p(\lambda_i)}\mid i\in[s], \; g(x) \mbox{ is a divisor of } x^{\lambda_i}-1 \}$. Thus it consists of $p$-powers. 
By Lemma \ref{slopop}(2)-(3), if $p\ne 2$ and $p>0$, then $P_{c_\sigma}(f(x))=\{1\}$; or if $p=2$, then $P_{c_\sigma}(f(x))=\{1,2\}$ or $P_{c_\sigma}(f(x))=\{1\}$. Hence it follows from either $p\neq 2$, or $p=2$ and $P_{c_\sigma}(f(x))\neq \{1,2\}$ that $P_{c_\sigma}(f(x))=\{1\}$.

Therefore $s(\sigma)=id$ if and only if $r(\sigma)=\sigma$ if and only if $P_{c_\sigma}(f(x))=\{1\}$ for all $f(x)\in \mathcal{M}_{c_\sigma}$ if and only if $|P_{c_\sigma}(f(x))|=\max\{j\in P_{c_\sigma}(f(x))\}$ for all $f(x)\in \mathcal{M}_{c_\sigma}$ if and only if $\mathcal{I}_{c_\sigma}=\emptyset$. This shows (1).

(2) This follows from ($*$) and Lemma \ref{per}. $\square$

\smallskip
 Given $T:=\{ n_1, n_2, \cdots, n_s\}\subset\mathbb{Z}_{>0}$ with $n_1>n_2>\cdots >n_s$ and $s\ge 2$, we denote by $L_2(T):=\{n_{s-1},n_s\}$ the last two elements of $T$, and by $L_{2nd}(T):=\{n_{s-1}\}$ the second to last element of $T$. We define a multiset $\mathcal{D}'_T:=\mathcal{D}_{T\backslash \{n_s\}}$.

For $c\in M_n(R)$ and $i\in [r_c]$, let $\tilde{\mathcal{I}}_{c,i}:=\{f(x)\in \mathcal{I}_{c,i}\mid 1\in P_c(f(x))\}$ and $\tilde{\mathcal{D}}_{c,i}:=\bigcup_{f(x)\in\tilde{\mathcal{I}}_{c,i}}\mathcal{D}_{L_2(P_c(f(x))}$. For the definitions of $\mathcal{D}_{T}$ and $\mathcal{I}_{c,i}$, we refer to Subsection \ref{number} and Section \ref{sect3}, respectively.

\begin{Lem}\label{preforcoro}
Suppose $p\neq 2$ or $p=2$ and $P_{c_\sigma}(g(x))\neq \{1,2\}$ for all $g(x)\in\mathcal{M}_{c_\sigma}$. Let $j \in [r_{c_\sigma}]$.

$(1)$ If $f(x) \in \mathcal{I}_{c_\sigma, j}$, then $\mathcal{D}_{P_{c_{s(\sigma)}}(f(x))}=\begin{cases} \mathcal{D}_{P_{c_{\sigma}}(f(x))}, & \text{ if } f(x)=x-1 \text{ or } f(x)\notin\tilde{\mathcal{I}}_{c_{\sigma},j},\\ \mathcal{D}'_{P_{c_\sigma}(f(x))}, & \text{ if } f(x)\neq x-1 \text{ and } f(x)\in\tilde{\mathcal{I}}_{c_{\sigma},j}.\end{cases}$

$(2)$ Assume $\mathcal{I}_{c_{s(\sigma)},j} = \mathcal{I}_{c_\sigma,j}$. Then

$\quad (i)$ If $x-1\in \tilde{\mathcal{I}}_{c_{\sigma},j}$, then $$\mathcal{D}_{c_{s(\sigma)},j}=\mathcal{D}_{L_2(P_{c_{\sigma}}(x-1))} \; \cup \bigcup_{x-1\neq g(x)\in\tilde{\mathcal{I}}_{c_{\sigma},j}}L_{2nd}\big(P_{c_{\sigma}}(g(x))\big) \; \cup \; (\mathcal{D}_{c_{\sigma},j}\setminus\tilde{\mathcal{D}}_{c_{\sigma},j}),$$
where the unions are taken in the sense of multisets.

$\quad (ii)$ If $x-1\notin \tilde{\mathcal{I}}_{c_{\sigma},j}$, then $$\mathcal{D}_{c_{s(\sigma)},j}=\bigcup_{g(x)\in\tilde{\mathcal{I}}_{c_\sigma,j}}L_{2nd}\big(P_{c_\sigma}(g(x))\big) \; \cup \;
(\mathcal{D}_{c_{\sigma},j}\setminus\tilde{\mathcal{D}}_{c_{\sigma},j}),$$
where the unions are taken in the sense of multisets.
\end{Lem}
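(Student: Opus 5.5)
The whole argument rests on Lemma \ref{sseoiff}(2) together with the explicit shape of the power-index sets coming from Lemma \ref{per}. For every irreducible $f(x)$ dividing some $x^{\lambda_i}-1$, the set $P_{c_\sigma}(f(x))$ consists of $p$-powers $p^{\nu_p(\lambda_i)}$. Passing from $\sigma$ to $s(\sigma)$ deletes exactly the $p$-regular cycles, that is, the index $1=p^0$. The exception is $f(x)=x-1$: since $s(\sigma)\in\Sigma_n$ still fixes the points moved by $r(\sigma)$, those fixed points contribute $(x-1)^1$ again.

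\textbf{Part (1).} Write $T:=P_{c_\sigma}(f(x))=\{n_1>\cdots>n_t\}$.

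If $f(x)=x-1$, Lemma \ref{sseoiff}(2) gives $P_{c_{s(\sigma)}}(f(x))=T$, so the two $\mathcal{D}$'s coincide.

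If $f(x)\neq x-1$ and $f(x)\notin\tilde{\mathcal{I}}_{c_\sigma,j}$, then $1\notin T$. Hence $T\setminus\{1\}=T$ and again nothing changes.

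If $f(x)\neq x-1$ and $f(x)\in\tilde{\mathcal{I}}_{c_\sigma,j}$, then $n_t=1$ and $P_{c_{s(\sigma)}}(f(x))=T\setminus\{n_t\}$. Its $\mathcal{D}$ is $\mathcal{D}_{T\setminus\{n_s\}}=\mathcal{D}'_T$ by definition. One should check that $T\setminus\{1\}$ is nonempty. This holds because $f(x)\in\mathcal{I}_{c_\sigma}$, so by $(*)$ in the proof of Lemma \ref{sseoiff} we have $T\neq\{1\}$.

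\textbf{Part (2).} Under the hypothesis $\mathcal{I}_{c_{s(\sigma)},j}=\mathcal{I}_{c_\sigma,j}$, we have
$$\mathcal{D}_{c_{s(\sigma)},j}=\bigcup_{f\in\mathcal{I}_{c_\sigma,j}}\mathcal{D}_{P_{c_{s(\sigma)}}(f)}.$$
I would split this union according to whether $f\in\tilde{\mathcal{I}}_{c_\sigma,j}$.

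For $f\notin\tilde{\mathcal{I}}_{c_\sigma,j}$, part (1) says the contribution is unchanged. For $f\in\tilde{\mathcal{I}}_{c_\sigma,j}$ with $f\neq x-1$, the key multiset identity is the following. For $T$ with $n_t=1$ and $t\ge2$,
$$\mathcal{H}_T=\mathcal{H}_{T\setminus\{1\}}\setminus\{\{n_{t-1}\}\}\cup\{\{n_{t-1}-1,\,1\}\},$$
so that
$$\mathcal{D}_T=\mathcal{D}'_T\setminus\{\{n_{t-1}\}\}\cup\mathcal{D}_{L_2(T)},$$
where $\mathcal{D}_{L_2(T)}=\mathcal{D}_{\{n_{t-1},1\}}$ is $\{\{n_{t-1}-1\}\}$ with $1$'s removed. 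Rearranging gives $\mathcal{D}'_T=(\mathcal{D}_T\setminus\mathcal{D}_{L_2(T)})\cup L_{2nd}(T)$. Thus the contribution of such $f$ equals its old contribution minus $\mathcal{D}_{L_2(P_{c_\sigma}(f))}$ plus $L_{2nd}(P_{c_\sigma}(f))$.

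Summing over $f$ and collecting the removed pieces $\mathcal{D}_{L_2}$ into $\tilde{\mathcal{D}}_{c_\sigma,j}$ gives (ii) directly.

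In case (i), the summand for $x-1$ is unchanged. Its $\mathcal{D}_{L_2(P_{c_\sigma}(x-1))}$ is counted inside $\tilde{\mathcal{D}}_{c_\sigma,j}$, so it must be added back. This explains the extra term $\mathcal{D}_{L_2(P_{c_\sigma}(x-1))}$ in (i).

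\textbf{Main obstacle.} The main difficulty is the multiset bookkeeping, specifically that $\mathcal{D}_{c_\sigma,j}\setminus\tilde{\mathcal{D}}_{c_\sigma,j}$ is well defined as a multiset difference. This holds because each $\mathcal{D}_{L_2(T)}$ is a sub-multiset of $\mathcal{D}_T$, since $\mathcal{H}_{L_2(T)}\subseteq\mathcal{H}_T$ when $n_t=1$. Consequently $\tilde{\mathcal{D}}_{c_\sigma,j}$ is a sub-multiset of $\mathcal{D}_{c_\sigma,j}$, and the removals can be performed termwise.

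A secondary point is that the hypothesis ($p\ne2$, or $P\neq\{1,2\}$) is used only to guarantee that $t\ge2$ for members of $\tilde{\mathcal{I}}$, via $(*)$. It also ensures the identity behaves correctly when $n_{t-1}-1=1$, which can only occur for $p=2$ with $n_{t-1}=2$; in that case $\mathcal{D}_{L_2(T)}$ is empty, which is harmless.
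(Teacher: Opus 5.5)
Your proposal is correct and follows the paper's proof. Part (1) comes from Lemma \ref{sseoiff}(2) and the definition of $\mathcal{D}'_T$. Part (2) comes from splitting the union over $\mathcal{I}_{c_\sigma,j}$ according to the cases of (1), and your identity $\mathcal{D}'_T=(\mathcal{D}_T\setminus\mathcal{D}_{L_2(T)})\cup L_{2nd}(T)$ (for $1\in T$, $|T|\ge 2$) makes explicit the rewriting step the paper leaves tacit.

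One small correction to your closing remark: $|T|\ge 2$ for $f\in\tilde{\mathcal{I}}_{c_\sigma,j}$ follows directly from the definition of $\mathcal{I}_{c_\sigma}$, since $T=\{1\}$ would give $|T|=\max T$. So the characteristic hypothesis is not what secures it; it enters only through the citation of Lemma \ref{sseoiff}.
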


{\it Proof.}
If $\mathcal{I}_{c_\sigma,j}=\emptyset$, then (1) and (2) are trivial. So we suppose $\mathcal{I}_{c_\sigma,j}\ne \emptyset$. In this case, $p>0$.

(1) This follows from Lemma \ref{sseoiff}(2) and the definition of $\mathcal{D}'_{P_{c_\sigma}(f(x))}$.

(2) By the definition of $\mathcal{D}_{c_{s(\sigma)},j}$ and the assumption $\mathcal{I}_{c_{s(\sigma)},j} = \mathcal{I}_{c_\sigma,j}$, we know $$\mathcal{D}_{c_{s(\sigma)},j}=\bigcup_{g(x)\in \mathcal{I}_{c_{s(\sigma)},j}}\mathcal{D}_{P_{c_{s(\sigma)}}(g(x))}=\bigcup_{g(x)\in \mathcal{I}_{c_{\sigma},j}}\mathcal{D}_{P_{c_{s(\sigma)}}(g(x))},$$ where the unions are taken in the sense of multisets. To complete the proof, we consider the two cases.

(i) $x-1\in \tilde{\mathcal{I}}_{c_{\sigma},j}$. According to (1), we divide the polynomials in $\mathcal{I}_{c_{\sigma},j}$ into the three parts:
\begin{align*}
&\bigcup_{g(x)\in \mathcal{I}_{c_{\sigma},j}}\mathcal{D}_{P_{c_{s(\sigma)}}(g(x))}
= \; \mathcal{D}_{P_{c_{\sigma}}(x-1)} \; \cup \bigcup_{x-1\neq g(x)\in \tilde{\mathcal{I}}_{c_\sigma,j}}\mathcal{D}'_{P_{c_{\sigma}}(g(x))} \; \cup \bigcup_{g(x)\in \mathcal{I}_{c_{\sigma},j}\backslash\tilde{\mathcal{I}}_{c_{\sigma},j}}\mathcal{D}_{P_{c_{\sigma}}(g(x))}\\
= \; & \mathcal{D}_{L_2(P_{c_{\sigma}}(x-1))} \; \cup \bigcup_{x-1\neq g(x)\in\tilde{\mathcal{I}}_{c_{\sigma},j}}L_{2nd}\big(P_{c_{\sigma}}(g(x))\big) \; \cup \; (\mathcal{D}_{c_{\sigma},j}\setminus\tilde{\mathcal{D}}_{c_{\sigma},j}),
\end{align*}
where the unions are taken in the sense of multisets.

(ii) $x-1\notin \tilde{\mathcal{I}}_{c_{\sigma},j}$. By (1), we partition the polynomials in $\mathcal{I}_{c_{\sigma},j}$ into two parts:
\begin{align*}
& \bigcup_{g(x)\in \mathcal{I}_{c_{\sigma},j}}\mathcal{D}_{P_{c_{s(\sigma)}}(g(x))}
=  \bigcup_{g(x)\in \tilde{\mathcal{I}}_{c_\sigma,j}}\mathcal{D}'_{P_{c_{\sigma}}(g(x))} \; \cup \bigcup_{g(x)\in \mathcal{I}_{c_{\sigma},j}\setminus\tilde{\mathcal{I}}_{c_\sigma,j}}\mathcal{D}_{P_{c_{\sigma}}(g(x))}\\
= & \bigcup_{g(x)\in\tilde{\mathcal{I}}_{c_\sigma,j}}L_{2nd}\big(P_{c_\sigma}(g(x))\big) \; \cup \; (\mathcal{D}_{c_{\sigma},j}\setminus\tilde{\mathcal{D}}_{c_{\sigma},j}),
\end{align*}
where the unions are taken in the sense of multisets. $\square$

\smallskip
Assume that $c_\sigma$ and $c_\tau$ are $Sg$-equivalent. Then $r_{c_\sigma}=r_{c_\tau}$ and, by definition, there is a permutation $\delta\in \Sigma_{r_{c_\sigma}}$ such that $Q_{c_\sigma,i}\simeq Q_{c_\tau,(i)\delta}$ as $R$-algebras and $\mathcal{D}_{c_\sigma,i}=\mathcal{D}_{c_\tau,(i)\delta}$ for $i\in [r_{c_\sigma}]$. Under this assumption, we have the following lemma.

\begin{Lem}\label{preforcor} Assume that $c_\sigma$ and $c_\tau$ are $Sg$-equivalent.
Suppose $p\neq 2$ or $p=2$ and $\nu_p(\lambda_i)\neq 1\neq \nu_p(\mu_{j})$ for all $i\in [s]$ and $j\in[t]$. Then the following hold.

$(1)$ For any $f(x) \in \mathcal{I}_{c_\sigma}$ with $1\in P_{c_\sigma}(f(x))$, there exists $j\in \mathbb{Z}_{>0}$ such that $p^{j}>2$ and $\{1, p^{j}\}=L_2(P_{c_\sigma}(f(x)))$.

$(2)$ Let $j \in [r_{c_\sigma}]$. Then

$\quad (i)$ $\tilde{\mathcal{I}}_{c_\sigma,j}\ne \emptyset$ if and only if $\tilde{\mathcal{I}}_{c_\tau,(j)\delta}\ne \emptyset$.

$\quad (ii)$ $\mathcal{D}_{c_{\sigma},j}\backslash\tilde{\mathcal{D}}_{c_{\sigma},j}=\mathcal{D}_{c_{\tau},(j)\delta}\backslash\tilde{\mathcal{D}}_{c_{\tau},(j)\delta}$ and $\tilde{\mathcal{D}}_{c_{\sigma},j}=\tilde{\mathcal{D}}_{c_{\tau},(j)\delta}$.

$\quad (iii)$ There is a bijection $\pi_j:\tilde{\mathcal{I}}_{c_\sigma,j}\to \tilde{\mathcal{I}}_{c_\tau,(j)\delta}$ such that $L_2\big(P_{c_\sigma}(f(x))\big)=L_2\big(P_{c_\tau}((f(x))\pi_j)\big)$ for all $f(x)\in \tilde{\mathcal{I}}_{c_\sigma,j}$. Moreover, $x-1\in \tilde{\mathcal{I}}_{c_\sigma,j}$ if and only if $x-1\in \tilde{\mathcal{I}}_{c_\tau,(j)\delta}$.  If $x-1\in \tilde{\mathcal{I}}_{c_\sigma,j}$, then we may assume $(x-1)\pi_j=x-1$.
\end{Lem}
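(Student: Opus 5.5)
The plan is to use that every power-index set $P_{c_\sigma}(f(x))$ consists only of $p$-powers (Lemma \ref{per}). Combined with the hypothesis on $p$, this lets us recognise inside $\mathcal{D}_{c_\sigma,j}$ exactly which entries come from the "last two elements" $L_2$. We may assume $p>0$, since otherwise $\mathcal{I}_{c_\sigma}=\emptyset$ and everything is vacuous.

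For (1), let $f(x)\in\mathcal{I}_{c_\sigma}$ with $1\in P_{c_\sigma}(f(x))$.
\begin{itemize}
\item By Lemma \ref{per}, $P_{c_\sigma}(f(x))=\{p^{\nu_p(\lambda_i)}\mid f(x)\mid x^{\lambda_i}-1\}$. Since $f(x)\in\mathcal{I}_{c_\sigma}$, we have $|P_{c_\sigma}(f(x))|\neq\max P_{c_\sigma}(f(x))$, so $P_{c_\sigma}(f(x))\neq\{1\}$.
\item Hence $L_2(P_{c_\sigma}(f(x)))=\{1,p^j\}$, where $p^j$ is the smallest element larger than $1$, and $j=\nu_p(\lambda_i)\ge1$ for some $i$.
\item If $p\ne2$ then $p^j\ge3$. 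If $p=2$ the hypothesis $\nu_2(\lambda_i)\ne1$ forces $j\ge2$, so $p^j\ge4$. This proves (1), and the same holds for $\tau$.
\end{itemize}

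For (2) the key observation is a $p$-divisibility dichotomy for the entries of $\mathcal{D}_{P}$, where $P=P_{c_\sigma}(f(x))$ with $f(x)\in\mathcal{I}_{c_\sigma,j}$.
\begin{itemize}
\item The entries of $\mathcal{H}_P$ are differences $p^a-p^b$ of consecutive elements, together with $\min P$.
\item If $1\notin P$, all of these are divisible by $p$.
\item If $1\in P$, the only entry not divisible by $p$ is $p^{j_f}-1$, where $L_2(P)=\{1,p^{j_f}\}$. The entry $\min P=1$ is removed when passing to $\mathcal{D}$, and $p^{j_f}-1\ge2$ survives by (1).
\item Consequently $\tilde{\mathcal{D}}_{c_\sigma,j}=\{\{p^{j_f}-1\mid f\in\tilde{\mathcal{I}}_{c_\sigma,j}\}\}$ is exactly the sub-multiset of $\mathcal{D}_{c_\sigma,j}$ of elements prime to $p$. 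The analogous statement holds for $\tau$.
\end{itemize}
Now $\mathcal{D}_{c_\sigma,j}=\mathcal{D}_{c_\tau,(j)\delta}$ immediately gives (ii). It also gives (i), because $\tilde{\mathcal{D}}_{c_\sigma,j}$ is nonempty iff $\tilde{\mathcal{I}}_{c_\sigma,j}$ is nonempty. For the first part of (iii), the equality of the multisets $\{\{p^{j_f}-1\}\}$ on both sides yields a bijection $\pi_j$ with $p^{j_f}-1=p^{j_{(f)\pi_j}}-1$. Hence $j_f=j_{(f)\pi_j}$, i.e.\ the $L_2$ sets agree.

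The step I expect to be the main obstacle is the statement about $x-1$; I would handle it as follows.
\begin{itemize}
\item Since $Q_{c_\sigma,j}\simeq Q_{c_\tau,(j)\delta}$, the class of $x-1$ (the linear class, $Q\simeq R$) corresponds under $\delta$ to the linear class. So it suffices to treat the linear class $j$.
\item Every $p$-power occurring for any cycle belongs to $P_{c_\sigma}(x-1)$, because $x-1\mid x^{\lambda_i}-1$ for all $i$. Hence $P_{c_\sigma}(g(x))\subseteq P_{c_\sigma}(x-1)$ for every linear $g(x)$.
\item If some $g(x)\in\tilde{\mathcal{I}}_{c_\sigma,j}$ exists, then $1\in P_{c_\sigma}(x-1)$ and $|P_{c_\sigma}(x-1)|\ge2$. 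This puts $x-1\in\mathcal{I}_{c_\sigma}$ via ($*$) in the proof of Lemma \ref{sseoiff}, and so $x-1\in\tilde{\mathcal{I}}_{c_\sigma,j}$.
\item Thus $x-1\in\tilde{\mathcal{I}}_{c_\sigma,j}$ iff $\tilde{\mathcal{I}}_{c_\sigma,j}\neq\emptyset$, and by (i) this transfers to $\tau$.
\item The containment also shows $j_{x-1}\le j_g$ for all $g$ in the class, so $p^{j_{x-1}}-1$ is the minimum of $\tilde{\mathcal{D}}_{c_\sigma,j}$, and likewise on the $\tau$-side.
\item Therefore $(x-1)\pi_j$ carries the minimal value, equal to $j_{x-1}$ on the $\tau$-side. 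Composing $\pi_j$ with the transposition exchanging $(x-1)\pi_j$ and $x-1$ on the $\tau$-side preserves the $L_2$-equalities and achieves $(x-1)\pi_j=x-1$.
\end{itemize}
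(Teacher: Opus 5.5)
Your proof is correct. For part (1), for the bijection in (iii), and for the statements about $x-1$, it follows the paper closely. Your route to (2)(i)--(ii) is different, though.

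\textbf{Part (2)(i)--(ii).} The paper picks an element $p^{j'}-1$ of $\tilde{\mathcal{D}}_{c_\sigma,j}$ and uses $\mathcal{D}_{c_\sigma,j}=\mathcal{D}_{c_\tau,(j)\delta}$ to locate it in some $\mathcal{D}_{P_{c_\tau}(g(x))}$. It then invokes the uniqueness statements of Lemma \ref{slopop}: $p^{j'}-1=p^k-p^{k'}$ forces $k=j'$ and $k'=0$, and $p^{j'}-1=p^i$ forces $p=2$ and $j'=1$. From this it gets $L_2(P_{c_\tau}(g(x)))=\{1,p^{j'}\}$. Part (ii) is then only asserted to follow from ``the proof of (2)(i)''.

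You instead observe that, since every power index set consists of $p$-powers, $\tilde{\mathcal{D}}_{c,j}$ is exactly the sub-multiset of $\mathcal{D}_{c,j}$ formed by the entries prime to $p$. Your part (1) guarantees that $p^{j_f}-1\geq 2$ survives the removal of $1$'s.
\begin{itemize}
\item This intrinsic characterization needs only $p\mid p^a-p^b$ for $b\ge 1$ and $p\nmid p^a-1$, so it bypasses Lemma \ref{slopop} entirely.
\item It yields both multiset equalities of (ii), with multiplicities, in one stroke.
\item Statement (i) and the $L_2$-matching bijection in (iii) then drop out immediately.
\item It makes explicit the multiplicity bookkeeping that the paper leaves implicit.
\end{itemize}

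\textbf{The statements about $x-1$.} Here your argument is essentially the paper's. The containment $P_{c_\sigma}(g(x))\subseteq P_{c_\sigma}(x-1)$ from Lemma \ref{per} is what the paper uses when it realizes $1$ and $p^{j'}$ as $p$-parts of cycle lengths. Your inequality $j_{x-1}\le j_g$ is the same two-sided minimality by which the paper obtains $k=k'$ before redefining $\pi_j$. Your remark that $Q_{c_\sigma,j}\simeq Q_{c_\tau,(j)\delta}$ forces both classes to be the linear class spells out a step the paper attributes only to ``the definition of equivalence classes''.
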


{\it Proof.} For $p=0$, it follows from Lemma \ref{per} that all maximal elementary divisors of $c_{\sigma}$ are irreducible. This shows $\mathcal{I}_{c_{\sigma}}=\emptyset.$ So all conclusions in Lemma \ref{preforcor} are trivially true. So we assume $p>0$.

Suppose $p=2$. Then $\nu_p(\lambda_i)\neq 1$ for all $i\in [s]$ by assumption. According to Lemma \ref{per}, we know that $P_{c_\sigma}(f(x))$ consists of $p$-powers for $f(x)\in \mathcal{M}_{c_\sigma}$. This implies $2\notin P_{c_\sigma}(f(x))$ for all $f(x)\in \mathcal{M}_{c_\sigma}$, and therefore $P_{c_\sigma}(f(x))\ne\{1,2\}$ for all $f(x)\in \mathcal{M}_{c_\sigma}$. Hence, both $c_\sigma$ and $c_\tau$ satisfy the conditions of Lemma \ref{preforcoro}.

(1) Suppose $f(x) \in \mathcal{I}_{c_\sigma}$ with $1\in P_{c_\sigma}(f(x))$. In this case, we assume $i\in P_{c_\sigma}(f(x))$ such that $f(x)^i\in \mathcal{M}_{c_{\sigma}}$. It follows from $|P_{c_\sigma}(f(x))|\ne i$ that $i\ne 1$ and $|P_{c_\sigma}(f(x))|\ge 2$. Then there exists $j\in [s]$ such that $L_2(P_{c_\sigma}(f(x)))=\{1, p^{\nu_p(\lambda_j)}\}$ with $\nu_p(\lambda_j)\ne 0$. By assumption, $p\neq 2$ or $p=2$ and $\nu_p(\lambda_i)\neq 1$ for all $i\in [s]$. This implies $p^{\nu_p(\lambda_j)}>2$. Thus (1) follows.

(2)(i) Suppose $\tilde{\mathcal{I}}_{c_\sigma,j}\ne \emptyset$. Then there is $f(x)\in \mathcal{I}_{c_\sigma,j}$ such that $1\in P_{c_\sigma}(f(x))$. By (1),
there exists $j'\in \mathbb{Z}_{>0}$ such that $p^{j'}>2$ and $\{1, p^{j'}\}=L_2(P_{c_\sigma}(f(x)))$. Hence $p^{j'}-1\in \mathcal{D}_{c_\sigma,j}=\mathcal{D}_{c_\tau,(j)\delta}$
(the equality follows by the $Sg$-equivalence of $c_\sigma$ and $c_\tau$), and therefore, there is $g(x)\in \mathcal{I}_{c_\tau,(j)\delta}$ such that $p^{j'}-1=p^{k}-p^{k'}$ with
$p^{k}, p^{k'}\in P_{c_\tau}(g(x))$ for some $k>k'\in\mathbb{N}$ or $p^{j'}-1=p^{i}\in P_{c_\tau}(g(x))$ for some $i\in \mathbb{Z}_{>0}$. By Lemma \ref{slopop}, we have $j'=k$ and
$k'=0$; or $p=2, j'=1$ and $i=0$. Thanks to $2^1-1=1\notin \mathcal{D}_{c_\tau,(j)\delta}$ (see the definition of $\mathcal{D}_T$), the latter cannot occur. Hence $L_2\big(P_{c_\tau}(g(x))\big)=\{1, p^{j'}\}$ 
that is, $\tilde{\mathcal{I}}_{c_\tau,(j)\delta}\ne \emptyset$. Similarly, we can show $\tilde{\mathcal{I}}_{c_\sigma,j}\ne \emptyset$ if $\tilde{\mathcal{I}}_{c_\tau,(j)\delta}\ne \emptyset$.

(2)(ii) It follows from $\mathcal{D}_{c_{\sigma},j}=\mathcal{D}_{c_{\tau},(j)\delta}$ and the proof of (2)(i) that $\mathcal{D}_{c_{\sigma},j}\backslash\tilde{\mathcal{D}}_{c_{\sigma},j}=\mathcal{D}_{c_{\tau},(j)\delta}\backslash\tilde{\mathcal{D}}_{c_{\tau},(j)\delta}$ and $\tilde{\mathcal{D}}_{c_{\sigma},j}=\tilde{\mathcal{D}}_{c_{\tau},(j)\delta}$.

(2)(iii) By definition, $1\in P_c(f(x))$ for $f(x)\in \tilde{\mathcal{I}}_{c_{\sigma},j}$. Due to $\bigcup_{f(x)\in\tilde{\mathcal{I}}_{c_\sigma,j}}\mathcal{D}_{L_2(P_c(f(x))}=\tilde{\mathcal{D}}_{c_{\sigma},j}
=\tilde{\mathcal{D}}_{c_{\tau},(j)\delta}=\bigcup_{g(x)\in\tilde{\mathcal{I}}_{c_\tau,(j)\delta}}\mathcal{D}_{L_2(P_c(g(x))}$ and (1),  there is a bijection $\pi_j:\tilde{\mathcal{I}}_{c_\sigma,j}\to \tilde{\mathcal{I}}_{c_\tau,(j)\delta}$ such that, for $f(x)\in \tilde{\mathcal{I}}_{c_\sigma,j}$,
$$L_2\big(P_{c_\sigma}(f(x))\big)=L_2\big(P_{c_\tau}((f(x))\pi_j)\big).$$

Suppose $x-1\in \tilde{\mathcal{I}}_{c_\sigma,j}$. Then $\tilde{\mathcal{I}}_{c_\tau,(j)\delta}\ne\emptyset$ by (2)(i), and therefore there is $g(x)\in \mathcal{I}_{c_\tau,(j)\delta}$ such that $1\in P_{c_\tau}(g(x))$. It then follows from (1) that there exists $j'\in \mathbb{Z}_{>0}$ such that $p^{j'}>2$ and $\{1, p^{j'}\}=L_2(P_{c_\tau}(g(x)))$. This implies that $\nu_p(\mu_{i})=0$ and $\nu_p(\mu_{i'})=j'$ for some $i, i'\in[t]$. By Lemma \ref{per}, we have $\{1, p^{j'}\}\subseteq P_{c_\tau}(x-1)=\{p^{\nu_p(\mu_i)}\mid i\in [t]\}$. Hence $P_{c_\tau}(x-1)\backslash \{1\}\ne \emptyset$. Thanks to $p\neq 2$ or $p=2$ and $\nu_p(\mu_k)\neq 1$ for all $k\in [t]$, we get $\min\{k\in P_{c_\tau}(x-1)\backslash \{1\}\}>2$. This implies $x-1\in \mathcal{I}_{c_\tau}$. By the definition of equivalence classes of $\mathcal{I}_{c_{\tau}}$, we have $x-1\in \mathcal{I}_{c_\tau,(j)\delta}$. Since $1\in P_{c_\tau}(x-1)$, we have $x-1\in \tilde{\mathcal{I}}_{c_\tau,(j)\delta}$.  Similarly, we can show that $x-1\in \tilde{\mathcal{I}}_{c_\sigma,j}$ if $x-1\in \tilde{\mathcal{I}}_{c_\tau,(j)\delta}$.

Now we assume  $x-1\in \tilde{\mathcal{I}}_{c_\sigma,j}$ and $(x-1)\pi_j \neq x-1$. Then there are $ f(x)\in \tilde{\mathcal{I}}_{c_\sigma,j}$ and $g(x)\in \tilde{\mathcal{I}}_{c_\tau,(j)\delta}$ such that $(x-1)\pi_j=g(x)$ and $(f(x))\pi_j=x-1$. Clearly, $f(x)\ne x-1\ne g(x)$. By (1), there are $k, k'\in \mathbb{Z}_{>0}$ such that $L_2(P_{c_\sigma}(x-1))=\{1, p^k\}=L_2\big(P_{c_\tau}(g(x))\big)$ and $L_2\big(P_{c_\sigma}(f(x))\big)=\{1, p^{k'}\}=L_2(P_{c_\tau}(x-1))$. This implies that $\nu_p(\lambda_{i})=k'$ and $\nu_p(\mu_{i'})=k$ for some $i\in[s]$ and $i'\in[t]$ . By Lemma \ref{per}, we have $P_{c_\sigma}(x-1)=\{p^{\nu_p(\lambda_i)}\mid i\in [s]\}$ and $P_{c_\tau}(x-1)=\{p^{\nu_p(\mu_i)}\mid i\in [t]\}$. Thus $p^{k'}\in P_{c_\sigma}(x-1)$ and $p^k\in P_{c_\tau}(x-1)$, and therefore $k=k'$. Now we define a map:
$$\pi_j': \tilde{\mathcal{I}}_{c_\sigma,j}\lra \tilde{\mathcal{I}}_{c_\tau,(j)\delta}, \quad q(x)\mapsto \begin{cases}(q(x))\pi_j, & \mbox{ if } q(x)\not\in \{x-1,f(x)\},\\
x-1, & \mbox{ if } q(x)=x-1, \\
g(x), & \mbox{ if } q(x)= f(x).\end{cases}$$
Then $\pi'_j$ is a desired map. $\square$

\smallskip
{\bf Proof of Theorem \ref{seispop}.} Note that all algebras involved in Theorem \ref{seispop} are semisimple if $p=0$. Thus there is nothing to prove. So we may assume $p>0$.
By Theorem \ref{seocma}, it suffices to show that $c_{s(\sigma)}\stackrel{Sg}\sim c_{s(\tau)}$ if $c_{\sigma}\stackrel{Sg}\sim c_{\tau}$.

Now suppose $c_{\sigma}\stackrel{Sg}\sim c_{\tau}$. Then $r_{c_\sigma}=r_{c_\tau}$ and there is a permutation $\delta\in \Sigma_{r_{c_\sigma}}$ such that $Q_{c_\sigma,i}\simeq Q_{c_\tau,(i)\delta}$ as $R$-algebras and $\mathcal{D}_{c_\sigma,i}=\mathcal{D}_{c_\tau,(i)\delta}$ for $i\in [r_{c_\sigma}]$. Recall that $r_{c_{\sigma}}$ denotes the  number of equivalence classes of $\mathcal{I}_{c_{\sigma}}$ (see Section \ref{sect3}). If $p=2$, then $\nu_p(\lambda_i)\neq 1$ for all $i\in [s]$. By Lemma \ref{per}, $P_{c_\sigma}(f(x))$ does not contain $2$ for all $f(x)\in \mathcal{M}_{c_\sigma}$, and therefore $P_{c_\sigma}(f(x))\ne\{1,2\}$ for all $f(x)\in \mathcal{M}_{c_\sigma}$. Thus both $c_\sigma$ and $c_\tau$ satisfy the conditions of Lemmas \ref{sseoiff}-\ref{preforcor}. It then follows from Lemma \ref{sseoiff}(2) that $\mathcal{I}_{c_{s(\sigma)}}=\mathcal{I}_{c_\sigma}$ and $\mathcal{I}_{c_{s(\tau)}}=\mathcal{I}_{c_\tau}$. Now we consider the two cases.

(1) $s(\sigma)=id$. Then $\mathcal{I}_{c_\sigma}=\emptyset$ by Lemma \ref{sseoiff}(1). Hence $r_{c_{\sigma}}=0$, and therefore  $r_{c_{\tau}}=r_{c_{\sigma}}=0$. Thus $\mathcal{I}_{c_\tau}=\emptyset$ and $\mathcal{I}_{c_{s(\tau)}}=\emptyset$.
By Lemma \ref{sseoiff}(1), we get $s(\tau)=id$. Hence $c_{s(\sigma)}\stackrel{Sg}\sim c_{s(\tau)}$.

(2) $s(\sigma)\neq id$. Then $\mathcal{I}_{c_\sigma}\ne\emptyset$ by Lemma \ref{sseoiff}(1). Thus $r_{c_{\sigma}}\ne 0$ and $r_{c_{s(\tau)}}=r_{c_\tau}=r_{c_\sigma}=r_{c_{s(\sigma)}}\neq 0$. Thus $s(\tau)\neq id$. In this case, let $r:=r_{c_{s(\sigma)}}$ and assume that $\mathcal{I}_{c_{s(\sigma)},i} = \mathcal{I}_{c_\sigma,i}$ and $\mathcal{I}_{c_{s(\tau)},i} = \mathcal{I}_{c_\tau,i}$ for all $i\in [r]$. We already know $Q_{c_{s(\sigma)},i}\simeq Q_{c_{s(\tau)},(i)\delta}$ as algebras for $i\in [r]$. It remains to show $\mathcal{D}_{c_{s(\sigma)},i}=\mathcal{D}_{c_{s(\tau)},(i)\delta}$ for $i\in [r]$. For this purpose we consider the following two cases:

(i) $x-1\in\tilde{\mathcal{I}}_{c_\sigma,i}$. Then it follows from Lemma \ref{preforcor}(2)(iii) that $x-1\in \tilde{\mathcal{I}}_{c_\tau,(i)\delta}$ and there is a bijection $\pi_i:\tilde{\mathcal{I}}_{c_\sigma,i}\to \tilde{\mathcal{I}}_{c_\tau,(i)\delta}$ such that $(x-1)\pi_i=x-1$ and $L_2\big(P_{c_\sigma}(f(x))\big)=L_2\big(P_{c_\tau}((f(x))\pi_i)\big)$ for all $f(x)\in \tilde{\mathcal{I}}_{c_\sigma,i}$. According to Lemmas \ref{preforcoro}(2)(i) and \ref{preforcor}(2)(ii), in order to prove $\mathcal{D}_{c_{s(\sigma)},i}=\mathcal{D}_{c_{s(\tau)},(i)\delta}$, it suffices to show:
$$(\sharp)\quad \mathcal{D}_{L_2\big(P_{c_\sigma}(x-1)\big)} \, \cup\bigcup_{x-1\neq h(x)\in\tilde{\mathcal{I}}_{c_\sigma,i}}L_{2nd}\big(P_{c_\sigma}(h(x))\big)=\mathcal{D}_{L_2\big(P_{c_\tau}(x-1)\big)}
\, \cup \bigcup_{x-1\neq g(x)\in\tilde{\mathcal{I}}_{c_\tau,(i)\delta}}L_{2nd}\big(P_{c_\tau}(g(x))\big).$$

In fact, $\pi_i$ gives rise to the equality $L_2\big(P_{c_\sigma}(x-1)\big)=L_2\big(P_{c_\tau}(x-1)\big)$. Thus $\mathcal{D}_{L_2\big(P_{c_\sigma}(x-1)\big)}=\mathcal{D}_{L_2\big(P_{c_\tau}(x-1)\big)}$.
Moreover, due to $L_2\big(P_{c_\sigma}(h(x))\big)=L_2\big(P_{c_\tau}((h(x)\big)\pi_i))$ for $h(x)\in \tilde{\mathcal{I}}_{c_\sigma,i}\setminus \{x-1\}$, we have
$$\bigcup_{x-1\neq h(x)\in\tilde{\mathcal{I}}_{c_\sigma,i}}L_{2nd}\big(P_{c_\sigma}(h(x))\big) \; =\bigcup_{x-1\neq g(x)\in\tilde{\mathcal{I}}_{c_\tau,(i)\delta}}L_{2nd}\big(P_{c_\tau}(g(x))\big).$$ This shows $(\sharp)$.

(ii) $x-1\notin\tilde{\mathcal{I}}_{c_\sigma,i}$. In this case, by Lemma \ref{preforcor}(2)(iii), $x-1\notin \tilde{\mathcal{I}}_{c_\tau,(i)\delta}$ and there is a bijection $\pi_i:\tilde{\mathcal{I}}_{c_\sigma,i}\to \tilde{\mathcal{I}}_{c_\tau,(i)\delta}$ such that $L_2\big(P_{c_\sigma}(f(x))\big)=L_2\big(P_{c_\tau}((f(x))\pi_i)\big)$ for all $f(x)\in \tilde{\mathcal{I}}_{c_\sigma,i}$. Now, we prove $\mathcal{D}_{c_{s(\sigma)},i}=
\mathcal{D}_{c_{s(\tau)},(i)\delta}$. By Lemmas \ref{preforcoro}(2)(ii) and \ref{preforcor}(2)(ii), it suffices to show
$$\bigcup_{ h(x)\in\tilde{\mathcal{I}}_{c_\sigma,i}}L_{2nd}\big(P_{c_\sigma}(h(x))\big) \;
 =\bigcup_{h'(x)\in\tilde{\mathcal{I}}_{c_\tau,(i)\delta}}L_{2nd}\big(P_{c_\tau}(h'(x))\big).$$ But this follows immediately from the bijection $\pi_i$.

Thus we have shown $\mathcal{D}_{c_{s(\sigma)},i}=\mathcal{D}_{c_{s(\tau)},(i)\delta}$ for all $i\in[r]$, and therefore $c_{s(\sigma)}\stackrel{Sg}\sim c_{s(\tau)}$. $\square$

\medskip
The example shows that some assumptions in Theorem \ref{seispop} cannot be removed.

\begin{Bsp}\label{cnr}{\rm
Let $R=\mathbb{F}_2$ be the field of two elements, $\sigma\in \Sigma_9$ of the cycle type $(6, 3)$, and $\tau\in \Sigma_3$ of the cycle type $(3)$. Then $\nu_2(6)=1, \nu_2(3)=0, \mathcal{E}_{c_\sigma}=\{x-1,(x-1)^2,x^2+x+1,(x^2+x+1)^2\}$, $P_{c_\sigma}((x-1)^2)=\{1,2\}=P_{c_\sigma}((x^2+x+1)^2)$, $|P_{c_\sigma}((x-1)^2)|=2=\max \{i\in P_{c_\sigma}((x-1)^2)\}$, and $|P_{c_\sigma}((x^2+x+1)^2)|=2=\max \{i\in P_{c_\sigma}((x^2+x+1)^2)\}$. Thus $\mathcal{I}_{c_\sigma}=\emptyset$. Clearly, $s(\sigma)\in \Sigma_9$ is of the cycle type $(6, 1, 1, 1)$ and $\mathcal{E}_{c_{s(\sigma)}}=\{x-1,(x-1)^2,(x^2+x+1)^2\}$ and $\mathcal{I}_{c_{s(\sigma)}}=\{x^2+x+1\}$.

By calculations, $\mathcal{E}_{c_{\tau}}=\{x-1,x^2+x+1\}$, $\mathcal{I}_{c_{\tau}}=\emptyset$, $\mathcal{E}_{c_{s(\tau)}}=\{x-1\}$ and $\mathcal{I}_{c_{s(\tau)}}=\emptyset$. Hence $S_9(c_{s(\sigma)},R)$ and $S_3(c_{s(\tau)},R)$ are not singularly equivalent, while $S_9(c_{\sigma},R)$ and $S_3(c_{\tau},R)$ are singularly equivalent by Theorem \ref{seocma}. Thus Theorem \ref{seispop} may be false without the requirement on $\nu_p(\lambda_i)$ and $\nu_p(\mu_j)$.
}
\end{Bsp}

Note that $S_n(c_\sigma,R)$ and $S_m(c_\tau,R)$ are Morita equivalent if and only if they are derived equivalent (see \cite[Corollary 1.3]{lx1}).
If $\sigma$ and $\tau$ are $p$-singular permutations for a prime $p>0$, then $S_n(c_{\sigma},R)$ and $S_m(c_{\tau},R)$ are stably equivalent if and only if they are Morita equivalent (see \cite[Corollary 4.20]{lx2}). The next example demonstrates that singular equivalences are substantially different from Morita, derived and stable equivalences even for the centralizer algebras of permutation matrices.

\begin{Bsp}\label{dfsefpm}{\rm
Let $R$ be an algebraically closed field of  characteristic $p\ge 11$, $n=7p+15p^2+7p^3, m=8p+15p^2+7p^3$. We take $\sigma\in \Sigma_{n}$ of the cycle type $(7p^3, 7p^2, 5p^2,3p^2, 7p)$, and $\tau\in \Sigma_{m}$ of the cycle type $(7p^3, 7p^2, 5p^2, 3p^2, 5p, 3p)$. Then both $\sigma=s(\sigma)$ and $\tau = s(\tau)$ are $p$-singular. We will show that $S_{n}(c_\sigma,R)$ and $S_{m}(c_\tau,R)$ are singularly equivalent, but not Morita equivalent.

Let $\zeta_i$ be a primitive $i$-th root of unity over $R$ for $i\in\{3,5,7\}$.
By Lemma \ref{per}, we have
$\mathcal{E}_{c_\sigma}=\{(x-1)^{p}, (x-1)^{p^2}, (x-1)^{p^3},(x-\zeta_3)^{p^2},(x-\zeta_3^2)^{p^2},(x-\zeta_{5})^{p^2},\cdots,(x-\zeta_{5}^4)^{p^2},(x-\zeta_{7})^{p},\cdots,(x-\zeta_{7}^6)^{p},
(x-\zeta_{7})^{p^2},\cdots,(x-\zeta_{7}^6)^{p^2},(x-\zeta_{7})^{p^3},\cdots,(x-\zeta_{7}^6)^{p^3}\}$ and $\mathcal{E}_{c_\tau}=\{(x-1)^{p}, (x-1)^{p^2}, (x-1)^{p^3},(x-\zeta_3)^{p},(x-\zeta_3^2)^{p},(x-\zeta_3)^{p^2},(x-\zeta_3^2)^{p^2},(x-\zeta_{5})^{p},\cdots,(x-\zeta_{5}^4)^{p},(x-\zeta_{5})^{p^2},
\cdots,(x-\zeta_{5}^4)^{p^2},(x-\zeta_{7})^{p^2},\cdots,(x-\zeta_{7}^6)^{p^2},(x-\zeta_{7})^{p^3},\cdots,(x-\zeta_{7}^6)^{p^3}\}$. It follows that $$\mathcal{M}_{c_\sigma}=\{(x-1)^{p^3},(x-\zeta_3)^{p^2},(x-\zeta_3^2)^{p^2},(x-\zeta_{5})^{p^2},\cdots,(x-\zeta_{5}^4)^{p^2},(x-\zeta_{7})^{p^3},
\cdots,(x-\zeta_{7}^6)^{p^3}\}=\mathcal{M}_{c_\tau},$$ $P_{c_\sigma}((x-1)^{p^3})=\{p,p^2,p^3\}=P_{c_\sigma}((x-\zeta_7^k)^{p^3}), P_{c_\sigma}((x-\zeta_3^i)^{p^2})=P_{c_\sigma}((x-\zeta_5^j)^{p^2})=\{p^2\}$ for $i\in[2], j\in [4], k\in [6]$; $P_{c_\tau}((x-1)^{p^3})=\{p,p^2,p^3\}, P_{c_\tau}((x-\zeta_3^i)^{p^2})=P_{c_\tau}((x-\zeta_5^j)^{p^2})=\{p,p^2\}, P_{c_\tau}((x-\zeta_7^k)^{p^3})=\{p^2,p^3\}$ for $i\in[2], j\in [4], k\in [6]$.

Therefore, $\mathcal{I}_{c_\sigma}=\{x-1,x-\zeta_3,x-\zeta_3^2,x-\zeta_{5},\cdots,x-\zeta_{5}^4,x-\zeta_{7},\cdots,x-\zeta_{7}^6\}=\mathcal{I}_{c_\tau}$. Since $R$ is an algebraically closed field, we know  $r_{c_\sigma}=1=r_{c_\tau}$and $\mathcal{D}_{c_\sigma,1}=\mathcal{D}_{c_\tau,1}$. Thus $c_\sigma$ and $c_\tau$ are $Sg$-equivalent by Definition \ref{newequrel}(2), and therefore $S_{n}(c_\sigma,R)$ and $S_{m}(c_\tau,R)$ are singularly equivalent by Theorem \ref{seocma}. Since $P_{c_\sigma}((x-\zeta_3)^{p^2})\neq P_{c_\tau}(f(x))$ holds for $f(x)\in \mathcal{M}_{c_\tau}$, it follows tht $c_\sigma$ and $c_\tau$ are not $M$-equivalent by Definition \ref{newequrel2}(1), and therefore $S_{n}(c_\sigma,R)$ and $S_{m}(c_\tau,R)$ are not Morita equivalent by \cite[Theorem 1.1]{lx1}.
}
\end{Bsp}

Finally, we point out a special case where Morita, derived, stable and singular equivalences are mutually implied by each other.
We first prepare a couple of lemmas.

\textbf{For the rest of this subsection}, let $\sigma\in\Sigma_n$ be of the cycle type $(\lambda_1, \cdots, \lambda_s)$ and $\sigma^+\in\Sigma_{n+\lambda_{s+1}}$ be of the cycle type $\lambda^+:=(\lambda_1, \cdots, \lambda_s, \lambda_{s+1})$. We wirte $\lambda_i = p^{\nu_p(\lambda_i)}\lambda_i'$ with $\nu_p(\lambda_i')=0$ for all $i\in [s+1]$. Set $$I:=\{j\in[s]\mid \lambda_{s+1}' \mbox{ divides } \lambda_j'\} \; \mbox{  and } \; J:=\{p^{\nu_p(\lambda_i)}\mid i\in I\}.$$

\begin{Lem}\label{preone-more}
There exists an irreducible factor $f(x)$ of $x^{\lambda_{s+1}'}-1$ such that $P_{c_\sigma}(f(x))=J$ and $P_{c_{\sigma^+}}(f(x))=J\cup \{p^{\nu_p(\lambda_{s+1})}\}$.
\end{Lem}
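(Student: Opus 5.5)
We want an irreducible factor $f(x)$ of $x^{\lambda'_{s+1}}-1$ with two properties: $f(x)$ divides $x^{\lambda_j}-1$ exactly when $j\in I\cup\{s+1\}$ for $j\in[s+1]$, and the power indices attached to $f$ are read off via Lemma \ref{per}.

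First we reduce to the $p$-regular parts. In characteristic $p$ we have $x^{\lambda_j}-1=(x^{\lambda_j'}-1)^{p^{\nu_p(\lambda_j)}}$. Hence an irreducible $g(x)$ divides $x^{\lambda_j}-1$ if and only if it divides $x^{\lambda_j'}-1$. For $p=0$ we use the convention $\nu_p=0$, and this is trivial. By Lemma \ref{per}, for an irreducible $g(x)$ the power indices satisfy
$$P_{c_\sigma}(g(x))=\{p^{\nu_p(\lambda_j)}\mid j\in[s],\ g(x)\mid x^{\lambda_j'}-1\},$$
and the same holds for $\sigma^+$ with $j\in[s+1]$. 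It is not assumed that $g$ is maximal; this is just the set of exponents of the elementary divisors that are powers of $g$.

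Next we choose $f(x)$. If $\lambda'_{s+1}\mid\lambda'_j$, then by Lemma \ref{no-2}(2) every irreducible factor of $x^{\lambda'_{s+1}}-1$ divides $x^{\lambda'_j}-1$. So any factor we pick contributes $p^{\nu_p(\lambda_j)}$ for every $j\in I$. We must therefore pick $f$ not dividing $x^{\lambda'_j}-1$ for any $j\in[s]\setminus I$.

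The natural candidate is a "primitive" factor: an irreducible factor of the $\lambda'_{s+1}$-th cyclotomic polynomial $\Phi_{\lambda'_{s+1}}(x)$ over the prime field, viewed in $R[x]$. This polynomial is separable because $p\nmid\lambda'_{s+1}$. Its roots are primitive $\lambda'_{s+1}$-th roots of unity in a splitting field. Such a root $\zeta$ satisfies $\zeta^{\lambda'_j}=1$ if and only if $\lambda'_{s+1}\mid\lambda'_j$. Hence $f\mid x^{\lambda'_j}-1$ if and only if $j\in I$ or $j=s+1$.

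Alternatively, we can apply Lemma \ref{no-2}(1) to the set of $\lambda'_j$ for $j\notin I$ together with $\lambda'_{s+1}$. That lemma needs distinct elements, so we first pass to the underlying set; if $[s]\setminus I=\emptyset$, any irreducible factor works. This also yields such an $f$, since $\lambda'_{s+1}\nmid\lambda'_j$ for those $j$.

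Plugging $f$ into the formula above gives $P_{c_\sigma}(f(x))=\{p^{\nu_p(\lambda_j)}\mid j\in I\}=J$. Similarly, $P_{c_{\sigma^+}}(f(x))=J\cup\{p^{\nu_p(\lambda_{s+1})}\}$.

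The main obstacle is making the formula for $P_c(g(x))$ rigorous from Lemma \ref{per}. That lemma only gives the set $\mathcal{E}_{c_\sigma}$, so we must check that the elementary divisors which are powers of $f$ are exactly $f^{p^{\nu_p(\lambda_j)}}$ for those $j$ with $f\mid x^{\lambda_j'}-1$. This holds because $c_\sigma$ is a direct sum of companion matrices of $x^{\lambda_j}-1=\prod_g g^{p^{\nu_p(\lambda_j)}}$, taken over the distinct irreducible factors $g$ of $x^{\lambda_j'}-1$. One should also be careful with the convention that $P_c(f)$ is defined through the maximal divisor $f^i\in\mathcal{M}_c$, which exists as soon as $f$ occurs at all. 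Here $f$ does occur, at least for $\sigma^+$; for $\sigma$ it occurs when $J\neq\emptyset$, and otherwise both sides equal $\emptyset$.
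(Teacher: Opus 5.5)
Your proof is correct. The skeleton is the paper's: pick an irreducible factor $f(x)$ of $x^{\lambda'_{s+1}}-1$ that divides $x^{\lambda'_j}-1$ (for $j\in[s]$) exactly when $j\in I$. Then read off $P_{c_\sigma}(f(x))$ and $P_{c_{\sigma^+}}(f(x))$ from Lemma \ref{per}.

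The difference is in how $f$ is produced. The paper splits into two cases:
\begin{itemize}
\item If $I=[s]$, it takes $f=x-1$.
\item If $I\neq[s]$, it applies Lemma \ref{no-2}(1) to $\{\lambda'_j\mid j\in[s+1]\setminus I\}$ to avoid the indices outside $I$, and Lemma \ref{no-2}(2) to ensure $f$ divides $x^{\lambda'_j}-1$ for $j\in I$.
\end{itemize}
This is exactly your ``alternative'' paragraph. Your main route is different: you take a monic irreducible factor of the cyclotomic polynomial $\Phi_{\lambda'_{s+1}}(x)$, whose roots have exact multiplicative order $\lambda'_{s+1}$. This gives the criterion $f\mid x^{\lambda'_j}-1 \iff \lambda'_{s+1}\mid\lambda'_j$ for all $j$ at once. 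It needs no case distinction. It also in effect proves Lemma \ref{no-2}(1), which the paper states without proof, rather than citing it.

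The ``main obstacle'' you flag is not really one. $P_c$ is defined from the set $\mathcal{E}_c$, and a power $g(x)^k$ of a monic irreducible determines both $g$ and $k$. So your formula for $P_{c_\sigma}(g(x))$ follows directly from the set equality in Lemma \ref{per}, without the companion-matrix decomposition.
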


{\it Proof.}
If $I=[s]$, then $P_{c_\sigma}(x-1)=J$ and $P_{c_{\sigma^+}}(x-1)=J\cup \{p^{\nu_p(\lambda_{s+1})}\}$, according to Lemma \ref{per}. If $I\neq[s]$, there exists $i\in [s]$ such that $\lambda_{s+1}'\nmid \lambda_i'$, and therefore $|\Phi|\ge 2$, where we define $\Phi:=\{\lambda_j'\mid j\in [s+1]\setminus I\}$. By Lemma \ref{no-2}(1), there is an irreducible factor $f(x)$ of $x^{\lambda_{s+1}'}-1$ such that $f(x)\nmid x^{\lambda_j'}-1$ for all $j\in [s]\setminus I$. For $j\in I$, we have $\lambda'_{s+1}\mid \lambda'_j$, it follows from Lemma \ref{no-2}(2) that $x^{\lambda'_{s+1}}-1\mid x^{\lambda'_j}-1$ and $f(x)\mid x^{\lambda_j'}-1$ . Then we deduce from Lemma \ref{per} that $P_{c_\sigma}(f(x))=J$ and $P_{c_{\sigma^+}}(f(x))=J\cup \{p^{\nu_p(\lambda_{s+1})}\}$. $\square$

\medskip
Recall that, for $c\in M_n(R)$, we define $\mathcal{U}_c:=\bigcup_{g(x)\in \mathcal{M}_c}\mathcal{D}_{P_c(g(x))}$, where the union is taken in the sense of multisets. Clearly, $\mathcal{U}_c=\bigcup_{f(x)\in \mathcal{I}_c}\mathcal{D}_{P_c(f(x))}$.

\begin{Koro}\label{one-more} Suppose that  $R$ is of characteristic $p\ge 0$, $I\neq \emptyset$ and one of the three conditions holds:

$(i)$ $p\neq 2$;

$(ii)$ $p=2$ and $\nu_p(\lambda_{s+1}) \neq 1$; 

$(iii)$ $p=2$ and $\nu_p(\lambda_j) > 0$ for some $j\in I$.

\noindent Then the following are equivalent:

$(1)$ $S_n(c_\sigma,R)$ and $S_{n+\lambda_{s+1}}(c_{\sigma^+},R)$ are Morita equivalent.

$(2)$ $S_n(c_\sigma,R)$ and $S_{n+\lambda_{s+1}}(c_{\sigma^+},R)$ are derived equivalent.

$(3)$ $S_n(c_\sigma,R)$ and $S_{n+\lambda_{s+1}}(c_{\sigma^+},R)$ are stably equivalent.

$(4)$ $S_n(c_\sigma,R)$ and $S_{n+\lambda_{s+1}}(c_{\sigma^+},R)$ are singularly equivalent.

$(5)$ There exists $k\in [s]$ such that $\nu_p(\lambda_k)=\nu_p(\lambda_{s+1})$ and $\lambda_{s+1}'\mid\lambda_k'$.
\end{Koro}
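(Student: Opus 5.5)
The plan is to close the cycle $(5)\Rightarrow(1)\Rightarrow(2)\Rightarrow(4)$ and $(1)\Rightarrow(3)\Rightarrow(4)$, so that the only nontrivial direction is $(4)\Rightarrow(5)$.

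\emph{Step 1: $(5)\Rightarrow(1)$.} Assume $(5)$, so $p^{\nu_p(\lambda_{s+1})}=p^{\nu_p(\lambda_k)}\in J$. By Lemma \ref{per}, the elementary divisors of $c_{\sigma^+}$ are those of $c_\sigma$ together with $g(x)^{p^{\nu_p(\lambda_{s+1})}}$ for the irreducible factors $g(x)$ of $x^{\lambda_{s+1}'}-1$. Each such $g(x)$ divides $x^{\lambda_k'}-1$ by Lemma \ref{no-2}(2). Hence $g(x)^{p^{\nu_p(\lambda_k)}}$ is already an elementary divisor of $c_\sigma$. So $\mathcal{E}_{c_{\sigma^+}}=\mathcal{E}_{c_\sigma}$, and therefore $\mathcal{M}$ and all $P$-sets agree. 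Thus $c_\sigma\stackrel{M}\sim c_{\sigma^+}$, which gives Morita equivalence by \cite[Theorem 1.1]{lx1}.

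\emph{Step 2: the easy implications.} $(1)\Rightarrow(2)$ and $(1)\Rightarrow(3)$ are standard. $(2)\Rightarrow(4)$ holds because derived equivalences induce singular equivalences. For $(3)\Rightarrow(4)$, I would use \cite[Theorem 1.1]{lx2}: stable equivalence gives $c_\sigma\stackrel{S}\sim c_{\sigma^+}$, hence $Sg$-equivalence by Remark \ref{rber}, hence singular equivalence by Theorem \ref{seocma}.

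\emph{Step 3: $(4)\Rightarrow(5)$, the main step.} I argue by contraposition. Suppose $(5)$ fails, so $q:=p^{\nu_p(\lambda_{s+1})}\notin J$ (since $I\ne\emptyset$, $J\ne\emptyset$). Take the factor $f(x)$ of Lemma \ref{preonemore}, which satisfies $P_{c_\sigma}(f)=J$ and $P_{c_{\sigma^+}}(f)=J\cup\{q\}$; write $T:=J$ and $T^+:=T\cup\{q\}$. Since $(5)$ fails, no other polynomial's $P$-set changes, so
\[
\mathcal{U}_{c_{\sigma^+}}=\big(\mathcal{U}_{c_\sigma}\setminus\mathcal{D}_{T}\big)\cup\mathcal{D}_{T^+},
\]
and all other contributions to the classes $\mathcal{D}_{c,i}$ are untouched. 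By Theorem \ref{seocma}, singular equivalence forces $Sg$-equivalence, and $Sg$-equivalence gives $\mathcal{U}_{c_\sigma}=\mathcal{U}_{c_{\sigma^+}}$. It therefore suffices to show $\mathcal{D}_T\neq\mathcal{D}_{T^+}$ as multisets. I would check this with Lemma \ref{preofpp}, splitting into three cases according to where $q$ sits relative to $J=\{p^{m_1}>\cdots>p^{m_\ell}\}$.

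\emph{Case (a): $q$ exceeds all of $J$.} Part (1) with $k=1$ gives $\#_{q-p^{m_1}}(\mathcal{D}_{T^+})>\#_{q-p^{m_1}}(\mathcal{D}_T)$, provided $q-p^{m_1}>1$.

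\emph{Case (b): $q$ is below all of $J$.} Part (2) shows the multiplicity of $p^{m_\ell}$ strictly drops.

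\emph{Case (c): $q$ lies strictly between $p^{m_i}$ and $p^{m_{i+1}}$.} Part (3) shows the multiplicity of $p^{m_i}-p^{m_{i+1}}$ strictly drops. Here I must also check, via part (1)'s inequality or Lemma \ref{slopop}, that the new entries do not restore it.

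The delicate point is that the multiplicity comparisons need the relevant differences to exceed $1$, so they are not deleted in passing from $\mathcal{H}$ to $\mathcal{D}$. By Lemma \ref{slopop}(3), $p^j-p^k=1$ only when $p=2$ with exponents $1,0$. This is exactly the exception ruled out by hypotheses (i)–(iii): (ii) excludes $q=2$, and (iii) ensures $J$ contains an element other than $1$.

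The obstacle I expect is that hypothesis (iii) must cover the configuration $q=2$, $J=\{1,\dots\}$. This must be checked by hand using Lemma \ref{slopop}(2),(3) to see which multiplicity changes.

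Finally, since the change is confined to $f$'s class, a strict multiplicity change in $\mathcal{U}$ contradicts $\mathcal{U}_{c_\sigma}=\mathcal{U}_{c_{\sigma^+}}$, which completes the contraposition.
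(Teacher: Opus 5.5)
Your Steps 1 and 2 agree with the paper, and your case split by the position of $q=p^{\nu_p(\lambda_{s+1})}$ relative to $J$ is close to the paper's. The gap is the claim in Step 3 that, because (5) fails, only the $P$-set of the chosen factor $f(x)$ changes, so that $\mathcal{U}_{c_{\sigma^+}}=(\mathcal{U}_{c_\sigma}\setminus\mathcal{D}_T)\cup\mathcal{D}_{T^+}$. This claim is false. By Lemma \ref{per}, \emph{every} irreducible factor $g(x)$ of $x^{\lambda_{s+1}'}-1$ satisfies $P_{c_{\sigma^+}}(g(x))=P_{c_\sigma}(g(x))\cup\{q\}$. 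The failure of (5) only gives $q\notin J=P_{c_\sigma}(f(x))$. The other factors can have $P$-sets strictly larger than $J$, since they also collect exponents from cycles $k\notin I$, and these sets need not contain $q$. Concretely, take $R=\mathbb{F}_5$, $\sigma$ of cycle type $(75,5)$ and $\lambda_{s+1}=3$. Then $I=\{1\}$, $J=\{25\}$, $q=1$, (5) fails, and $f(x)=x^2+x+1$. But $P_{c_\sigma}(x-1)=\{25,5\}$ also becomes $\{25,5,1\}$, so $\mathcal{U}_{c_{\sigma^+}}=\{\{20,4,24\}\}$, whereas your formula predicts $\{\{20,5,24\}\}$. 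Hence proving $\mathcal{D}_T\neq\mathcal{D}_{T^+}$ does not contradict $\mathcal{U}_{c_\sigma}=\mathcal{U}_{c_{\sigma^+}}$: a priori, the change at $f(x)$ could be compensated by changes at the other factors.

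The missing idea is the paper's. First record $(\heartsuit)$: the difference between $\mathcal{U}_{c_\sigma}$ and $\mathcal{U}_{c_{\sigma^+}}$ comes exactly from the irreducible factors of $x^{\lambda_{s+1}'}-1$. Then use the \emph{non-strict} halves of Lemma \ref{preofpp}(1)--(3), which hold for an arbitrary set of $p$-powers. For the multiplicity selected in each case, every affected factor moves it weakly in the same direction, while $f(x)$ moves it strictly, so the strict change survives in $\mathcal{U}$. Your own choices have this same-direction property: $q-p^{m_1}$ in case (a), $p^{m_\ell}$ in case (b), and $p^{m_i}-p^{m_{i+1}}$ in case (c). In each, the hypotheses $m>t$, $t>m$ and $t>m>t'$ of the respective weak inequalities are met. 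So your argument is repaired by adding exactly this step, but as written the reduction to $\mathcal{D}_T\neq\mathcal{D}_{T^+}$ does not prove the contradiction. Incidentally, the configuration you flag under (iii) needs no separate treatment in your split. Under (iii) with $q=2$, the set $J$ contains some $2^{m}\ge 4$, so you are in case (b) or (c), never in the exceptional subcase $q-p^{m_1}=1$ of (a).
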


{\it Proof.} Clearly, (1) $\Ra$ (2) $\Ra$ (4) and (1) $\Ra$ (3) $\Ra$ (4) by Theorem \ref{seocma} and Remark \ref{rber}.

(5) $\Ra$ (1) Suppose that there exists $j\in [s]$ such that $\nu_p(\lambda_j)=\nu_p(\lambda_{s+1})$ and $\lambda_{s+1}' \mid \lambda_j'$. Then $\lambda_{s+1}\mid\lambda_j$. Thus $x^{\lambda_{s+1}}-1\mid x^{\lambda_j}-1$ by Lemma \ref{no-2}(2). It follows from Lemma \ref{per} that $\mathcal{E}_{c_{\sigma^+}}=\mathcal{E}_{c_\sigma}$. Hence $c_\sigma$ and $c_{\sigma^+}$ are $M$-equivalent, and therefore $S_n(c_\sigma,R)$ and $S_{n+\lambda_{s+1}}(c_{\sigma^+},R)$ are Morita equivalent \cite[Theorem 1.1]{lx1}.

Note that (5) always holds if $p=0$. In fact, in this case, $\nu_p(\lambda_j)=0$ for all $j\in [s+1]$. By assumption, $I\neq \emptyset$, that is, $\lambda_{s+1}'\mid \lambda_t'$ for some $t\in [s]$. Therefore (5) holds.

In the following \textbf{we assume $p>0$} and prove that (4) implies (5).

By Theorem \ref{seocma}, we may suppose that $c_\sigma$ and $c_{\sigma^+}$ are $Sg$-equivalent. Then
$\mathcal{U}_{c_\sigma}=\mathcal{U}_{c_{\sigma^+}}$ and  $\#_n(\mathcal{U}_{c_\sigma})=\#_n(\mathcal{U}_{c_{\sigma^+}})$ for all $n\in \mathbb{N}$.

Contrarily, suppose that, for any $k\in[s]$, we have $\nu_p(\lambda_k)\ne \nu_p(\lambda_{s+1})$, or $\lambda_{s+1}'\nmid \lambda_k'$. In particular, for $i\in I\subseteq [s]$, we have $\lambda_{s+1}'\mid \lambda_i'$, and therefore $\nu_p(\lambda_i)\neq \nu_p(\lambda_{s+1})$. We write $J=\{p^{\nu_p(\lambda_i)}\mid i\in I\}=\{ p^{m_1}, p^{m_2}, \cdots, p^{m_\ell}\}$ with $m_1>m_2>\cdots >m_\ell\ge 0$. Then $J\ne \emptyset$ by $I\neq \emptyset$, and $\nu_p(\lambda_{s+1})\neq m_i$ for all $i\in [\ell]$, that is, $p^{\nu_p(\lambda_{s+1})}\notin J$.

Note that $x^{\lambda_{i}}-1=(x^{\lambda_{i}'}-1)^{p^{\nu_p(\lambda_{i})}}$ for all $i\in [s+1]$. By Lemma \ref{per}, we have
\begin{align*}
\mathcal{E}_{c_{\sigma}} = \{h(x)^{p^{\nu_p(\lambda_i)}}\mid i\in[s], \; h(x)~\mbox{is an irreducible factor of}~ x^{\lambda_i'}-1 \}\ \text{ and }\\
\mathcal{E}_{c_{\sigma^+}}=\mathcal{E}_{c_\sigma}\cup \{h(x)^{p^{\nu_p(\lambda_{s+1})}}\mid h(x) \mbox{ is an irreducible factor of } x^{\lambda_{s+1}'}-1\}.
\end{align*} Particularly, both $P_{c_\sigma}(f(x))$ and $P_{c_{\sigma^+}}(g(x))$ consist of $p$-powers for $f(x)\in \mathcal{M}_{c_\sigma}$ and $g(x)\in \mathcal{M}_{c_{\sigma^+}}$.

For any $i\in [s+1]$ and any irreducible factor $f(x)$ of $x^{\lambda_{i}'}-1$, we have
$$(\heartsuit)\quad P_{c_{\sigma^+}}(f(x))=\begin{cases}P_{c_\sigma}(f(x)), & \text{ if } f(x)\nmid x^{\lambda_{s+1}'}-1,\\
P_{c_\sigma}(f(x))\cup\{p^{\nu_p(\lambda_{s+1})}\}, & \text{ if } f(x)\mid x^{\lambda_{s+1}'}-1. \end{cases}$$

By $(\heartsuit)$, we know that the difference between $\mathcal{U}_{c_\sigma}$ and $\mathcal{U}_{c_{\sigma^+}}$ is determined solely by the sets of power indices of the irreducible factors of $x^{\lambda_{s+1}'}-1$. Now, we choose an irreducible factor $f(x)$ of $x^{\lambda_{s+1}'}-1$ as in Lemma \ref{preone-more}, that is, $P_{c_\sigma}(f(x))=J$ and $P_{c_{\sigma^+}}(f(x))=J\cup \{p^{\nu_p(\lambda_{s+1})}\}$. We then compare $p^{\nu_p(\lambda_{s+1})}$ with $p^{m_\ell}$ defined by $J$ and apply Lemma \ref{preofpp} to obtain a contradiction in each of cases (i) -- (iii).

Suppose $m_\ell>\nu_p(\lambda_{s+1})$. By Lemma \ref{preofpp}(2), we have  $\#_{p^{m_\ell}}(\mathcal{D}_{P_{c_{\sigma}}(f(x))})>\#_{p^{m_\ell}}(\mathcal{D}_{P_{c_{\sigma^+}}(f(x))})$. Comparing $\#_{p^{m_\ell}}(\mathcal{U}_{c_\sigma})$ with $\#_{p^{m_\ell}}(\mathcal{U}_{c_{\sigma^+}})$, we know from $(\heartsuit)$ and Lemma \ref{preofpp}(2) that $\#_{p^{m_\ell}}(\mathcal{U}_{c_{\sigma}})>\#_{p^{m_\ell}}(\mathcal{U}_{c_{\sigma^+}})$. This contradicts $\mathcal{U}_{c_\sigma}=\mathcal{U}_{c_{\sigma^+}}$.

Thus $m_\ell\le\nu_p(\lambda_{s+1})$. Due to $\nu_p(\lambda_{s+1})\ne m_i$ for $i\in[\ell]$, we have $m_\ell<\nu_p(\lambda_{s+1})$.
Then either $ m_1<\nu_p(\lambda_{s+1})$, or there is $i\in [\ell]$ with $i\ge 2$, such that  $m_i <\nu_p(\lambda_{s+1})< m_{i-1}$. In both cases, we have  $p^{\nu_p(\lambda_{s+1})}-p^{m_i}\ge 1$, with $i=\min\{i\in [\ell]\mid \nu_p(\lambda_{s+1})\ge m_i\}$.

Assume $p^{\nu_p(\lambda_{s+1})}-p^{m_i}>1$. Then $\#_{p^{\nu_p(\lambda_{s+1})}-p^{m_i}}(\mathcal{D}_{P_{c_{\sigma^+}}(f(x))})>\#_{p^{\nu_p(\lambda_{s+1})}-p^{m_i}}(\mathcal{D}_{P_{c_{\sigma}}(f(x))})$ by Lemma \ref{preofpp}(1). Comparing $\#_{p^{\nu_p(\lambda_{s+1})}-p^{m_i}}(\mathcal{U}_{c_\sigma})$ with $\#_{p^{\nu_p(\lambda_{s+1})}-p^{m_i}}(\mathcal{U}_{c_{\sigma^+}})$, we know from $(\heartsuit)$ and Lemma \ref{preofpp}(1) that $\#_{p^{\nu_p(\lambda_{s+1})}-p^{m_i}}(\mathcal{U}_{c_{\sigma^+}})>\#_{p^{\nu_p(\lambda_{s+1})}-p^{m_i}}(\mathcal{U}_{c_\sigma})$. This again contradicts $\mathcal{U}_{c_\sigma}=\mathcal{U}_{c_{\sigma^+}}$.

Thus $p^{\nu_p(\lambda_{s+1})}-p^{m_i}=1$. By Lemma \ref{slopop}(3), we have $p=2, \nu_p(\lambda_{s+1})=1$ and $m_i=0$. This shows trivially that both (i) and (ii) lead to a contradiction. Thus we assume the case (iii), that is, $p=2$ and $\nu_p(\lambda_j)>0$ for some $j\in I$. Then $m_{1}>m_i=0$ and $i\ge 2$. Thanks to $1=\nu_p(\lambda_{s+1})\ne m_j$ for all $j\in [\ell]$ and $m_{i-1}>m_{i}=0$, we have $m_{i-1} \ge 2 > \nu_p(\lambda_{s+1}) =1 > m_{i}=0$. By Lemma \ref{preofpp}(3), we have $\#_{p^{m_{i-1}}-p^{m_i}}(\mathcal{D}_{P_{c_\sigma}(f(x))})>\#_{p^{m_{i-1}}-p^{m_i}}(\mathcal{D}_{P_{c_{\sigma^+}}(f(x))})$. Comparing $\#_{p^{m_{i-1}}-p^{m_i}}(\mathcal{U}_{c_\sigma})$ with $\#_{p^{m_{i-1}}-p^{m_i}}(\mathcal{U}_{c_{\sigma^+}})$, we know from $(\heartsuit)$ and Lemma \ref{preofpp}(3) that $\#_{p^{m_{i-1}}-p^{m_i}}(\mathcal{U}_{c_{\sigma}})>\#_{p^{m_{i-1}}-p^{m_i}}(\mathcal{U}_{c_{\sigma^+}})$. This again contradicts $\mathcal{U}_{c_\sigma}=\mathcal{U}_{c_{\sigma^+}}$, and therefore (5) follows. $\square$

\smallskip
Remark that the condition $I\neq \emptyset$ in Corollary \ref{one-more} is fulfilled if one takes $\lambda_{s+1}=1$. In the next lemma, we show when $I\neq \emptyset$ holds.

\begin{Lem}\label{caone-more}
If $\mathcal{U}_{c_\sigma}=\mathcal{U}_{c_{\sigma^+}}$ and $\nu_p(\lambda_{s+1})>0$, then $I\neq \emptyset$.
\end{Lem}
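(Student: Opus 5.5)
We argue by contradiction: assume $I=\emptyset$ and $\nu_p(\lambda_{s+1})>0$ (so $p>0$). Then no $j\in[s]$ has $\lambda_{s+1}'\mid\lambda_j'$. Apply Lemma \ref{no-2}(1) to the set $\{\lambda_j'\mid j\in[s]\}\cup\{\lambda_{s+1}'\}$; all its elements are prime to $p$ and $\lambda_{s+1}'$ divides none of the others. We obtain an irreducible factor $f(x)$ of $x^{\lambda_{s+1}'}-1$ with $f(x)\nmid x^{\lambda_j'}-1$ for all $j\in[s]$. If $s=0$ or the set degenerates to one element, $f(x)=x-1$ still works, since then $P_{c_\sigma}(x-1)=\emptyset$. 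This is precisely the case $J=\emptyset$ of Lemma \ref{preone-more}.

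By Lemma \ref{per}, $P_{c_\sigma}(f(x))=\emptyset$, so $f(x)$ contributes nothing to $\mathcal{U}_{c_\sigma}$. On the other hand, $P_{c_{\sigma^+}}(f(x))=\{p^{\nu_p(\lambda_{s+1})}\}$, so $f(x)^{p^{\nu_p(\lambda_{s+1})}}\in\mathcal{M}_{c_{\sigma^+}}$. The multiset $\mathcal{H}$ of this one-element set is $\{\{p^{\nu_p(\lambda_{s+1})}\}\}$. Since $\nu_p(\lambda_{s+1})>0$, we have $p^{\nu_p(\lambda_{s+1})}\ge 2$, so $\mathcal{D}_{P_{c_{\sigma^+}}(f(x))}=\{\{p^{\nu_p(\lambda_{s+1})}\}\}$.

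Now compare multiplicities via $(\heartsuit)$ from the proof of Corollary \ref{one-more}. That formula is valid for any $\sigma^+$, and also holds with $P_{c_\sigma}(g)=\emptyset$ for factors $g$ not occurring in $\sigma$. Every irreducible $g(x)$ with $g(x)\nmid x^{\lambda_{s+1}'}-1$ has the same power-index set for $c_\sigma$ and $c_{\sigma^+}$. Any $g(x)\mid x^{\lambda_{s+1}'}-1$ has $P_{c_{\sigma^+}}(g)=P_{c_\sigma}(g)\cup\{p^{e}\}$, where $e:=\nu_p(\lambda_{s+1})$.

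Set $q:=p^{e}$ and count $\#_q$ in both multisets. Take a factor $g(x)$ of $x^{\lambda_{s+1}'}-1$ with $T:=P_{c_\sigma}(g)\neq\emptyset$, and let $T^+:=T\cup\{q\}$. We claim $\#_q(\mathcal{D}_{T^+})\ge\#_q(\mathcal{D}_T)$, which is Lemma \ref{preofpp}(1) with $m=e$. That lemma as stated requires $m>t$; for $\#_{p^m}$ the statement is in fact $\#_{p^m}(\mathcal{D}_{T^+})\ge\#_{p^m}(\mathcal{D}_T)$.

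The main obstacle is that this inequality needs rechecking, because inserting $q$ could destroy a difference equal to $q$. Write $T=\{p^{m_1}>\dots>p^{m_\ell}\}$. A difference $p^{m_k}-p^{m_{k+1}}=q$ forces $p=2$ and $m_{k+1}=e=m_k-1$ by Lemma \ref{slopop}(3), hence $q\in T$, and then $T^+=T$. The last entry $p^{m_\ell}=q$ again means $q\in T$. So when $q\notin T$, inserting $q$ only splits differences that were not equal to $q$. The new pieces are either a new smallest entry equal to $q$, which adds an occurrence, or differences $p^a-q$ and $q-p^b$. These may add occurrences of $q$ but never remove any. Hence the count does not decrease.

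Summing over all $g(x)\in\mathcal{M}$ gives $\#_q(\mathcal{U}_{c_{\sigma^+}})\ge\#_q(\mathcal{U}_{c_\sigma})+1$, the extra $1$ coming from $f(x)$. This contradicts $\mathcal{U}_{c_\sigma}=\mathcal{U}_{c_{\sigma^+}}$, so $I\neq\emptyset$.
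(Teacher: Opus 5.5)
Your proof is correct and follows essentially the same route as the paper. You assume $I=\emptyset$, take the irreducible factor $f(x)$ of $x^{\lambda_{s+1}'}-1$ with $P_{c_\sigma}(f(x))=\emptyset$ and $P_{c_{\sigma^+}}(f(x))=\{p^{\nu_p(\lambda_{s+1})}\}$ (Lemma \ref{preone-more}, which you re-derive from Lemma \ref{no-2}(1)), and then combine $(\heartsuit)$ with the monotonicity of $\#_{p^{\nu_p(\lambda_{s+1})}}$ from Lemma \ref{preofpp}(1) to get a strict increase from $\mathcal{U}_{c_\sigma}$ to $\mathcal{U}_{c_{\sigma^+}}$; your re-check of that monotonicity is sound, and in fact simpler than you make it, since $q\notin T$ already forces $\#_q(\mathcal{D}_T)=0$ by Lemma \ref{slopop}(3).
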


{\it Proof.} We suppose contrarily that $I= \emptyset$ holds. Then $J=\emptyset$. By Lemma \ref{preone-more}, there is an irreducible factor $f(x)$ of $x^{\lambda_{s+1}'}-1$ such that $P_{c_\sigma}(f(x))=\emptyset$ and $P_{c_{\sigma^+}}(f(x))=\{p^{\nu_p(\lambda_{s+1})}\}$. As $P_{c_\sigma}(f(x))=\emptyset$, we have $f(x)^i\notin \mathcal{E}_{c_\sigma}$ for all $i\in \mathbb{N}$. It follows from $\nu_p(\lambda_{s+1})>0$ that $p>0$, $p^{\nu_p(\lambda_{s+1})}>1$ and $p^{\nu_p(\lambda_{s+1})}\in \mathcal{D}_{P_{c_{\sigma^+}}(f(x))}$. Now, comparing $\#_{p^{\nu_p(\lambda_{s+1})}}(\mathcal{U}_{c_\sigma})$ with $\#_{p^{\nu_p(\lambda_{s+1})}}(\mathcal{U}_{c_{\sigma^+}})$, we have $\#_{p^{\nu_p(\lambda_{s+1})}}(\mathcal{U}_{c_{\sigma^+}})>\#_{p^{\nu_p(\lambda_{s+1})}}(\mathcal{U}_{c_\sigma})$ by Lemma \ref{preofpp}(1) and $(\heartsuit)$. This contradicts the assumption $\mathcal{U}_{c_\sigma}=\mathcal{U}_{c_{\sigma^+}}$, and shows $I\ne \emptyset$. $\square$

\section{Homological conjectures and invariants of singular equivalences\label{sect5.2}}
In this section, we prove that the Cartan determinant and Auslander--Reiten conjectures hold true for centralizer matrix algebras. Our results in this section, together with known results, show that all homological conjectures hold true for centralizer matrix algebras. Also, some homological invariants are given for singular equivalences of centralizer matrix algebras.

Let $\Lambda$ be an Artin algebra.
If $\gd(\Lambda)<\infty$, then $\det(C_\Lambda)=\pm 1$ (see \cite{e1}). Furthermore, a well-known conjecture, called the Cartan determinant conjecture (see \cite{z1}), excludes the case: $\det(C_\Lambda)=-1$.

\smallskip
\emph{Cartan determinant conjecture} (CDC): If $\gd(\Lambda)<\infty$, then $\det(C_\Lambda)=1$.

\smallskip
This conjecture is known in few cases only. For example, if $\Lambda$ is a graded algebra or quasi-hereditary algebras or algebras of radical-cube-zero, then (CDC) holds for $\Lambda$ (see \cite{wilson, bf, z1, FZ}).

We mention another not yet solved homological conjecture.

\smallskip
\emph{Auslander--Reiten/generalized Nakayama conjecture} (ARC/GNC): If $M\in \Lambda\modcat$ is a self-orthogonal generator, then $M$ is projective (see \cite{ar}).

\smallskip
In general, all of these conjectures are open to date.
We will, however, verify these conjectures for centralizer matrix algebras. First, we have the following simple observation.

\begin{Prop}\label{ar-for-cma}
(ARC/GNC) holds for centralizer matrix algebras.
\end{Prop}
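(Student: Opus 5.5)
We reduce (ARC/GNC) for $S_n(c,R)$ to the blocks and then to the structure of each block as an endomorphism algebra of a generator over a local symmetric Nakayama algebra. By the block decomposition in Section \ref{sec4}, $S_n(c,R)\simeq\prod_{i=1}^{l_c}A_i$ with $A_i=\End_{U_i}(M_i)$, where $U_i=R[x]/(f_i(x)^{n_i})$ and $M_i$ is a generator of $U_i\modcat$ (it contains $R[x]/(f_i(x)^{n_i})=U_i$ as a summand, since $f_i(x)^{n_i}\in\mathcal{M}_c$). A self-orthogonal generator over a product of algebras decomposes blockwise into self-orthogonal generators, and projectivity is checked blockwise. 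So it suffices to prove the conjecture for $\Gamma:=\End_A(M)$, where $A=R[x]/(f(x)^n)$, $M=\bigoplus_{i=1}^sM(t_i)$ with $t_1=n$. By Morita invariance we may take $M$ basic.

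The key input is that $\Gamma$ has dominant dimension at least $2$: $A$ is self-injective and $M$ is a generator-cogenerator, so by Morita--Tachikawa the module ${}_\Gamma\Hom_A(M,A)$ is projective-injective. Moreover, as in the proof of Proposition \ref{scoeog}, the global dimension of the Auslander algebra of $A$ is at most $2$. Rather than appeal to the general (open) result relating dominant dimension to (ARC), we use the specific structure of $\Gamma$.

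Here is the concrete plan. First, we show that every $\Gamma$-module $X$ with $\Ext^i_\Gamma(X,\Gamma)=0$ for $i\ge1$ is Gorenstein projective. Second, we show that $\Gamma$ is Gorenstein. For the second step we would compute $\id({}_\Gamma\Gamma)\le 2$ directly: each projective $\Hom_A(M,M(t_j))$ has an injective copresentation obtained by applying $\Hom_A(M,-)$ to the exact sequences of Corollary \ref{esam}(1). Those sequences give $\add(M)$-approximations whose middle terms lie in $\add(N)$, and $\Hom_A(M,N')$ is injective for $N'\in\add(M)$ when we pass through $\Hom_A(M,A)$ and $D\Hom_A(-,M)$. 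If this computation proves messy, we would instead use the description of $\gp{\Gamma}$ obtained in the proof of Proposition \ref{scoeog}, namely $\gp{\Gamma}=\Hom_A(M,A\modcat)$, which already presupposes the Gorenstein property via \cite{kiwy1}.

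Now let $X$ be a self-orthogonal generator. Since $\Gamma\in\add(X)$, we get $\Ext^i_\Gamma(X,\Gamma)=0$ for all $i>0$. Over a Gorenstein algebra this makes $X$ Gorenstein projective, so $X\simeq\Hom_A(M,Y)$ for some $Y\in A\modcat$. It remains to show $Y\in\add(M)$. Suppose $Y$ has an indecomposable summand $M(k)$ with $k\notin T$, say $t_{i+1}<k<t_i$. Then $\Ext^1_\Gamma$ in the Frobenius category $(A\modcat,\mathcal{S}_{\add(M)})$ is computed via Lemma \ref{lift-exact}. The stable category $A\modcat/[M]$ decomposes as in Proposition \ref{scoeog}, so $\Ext^1_\Gamma(\Hom_A(M,M(k)),\Hom_A(M,M(k)))\simeq\StHom(\Omega M(k),M(k))$ in $\stmc{R[x]/(f(x)^{t_i-t_{i+1}})}$, which is the Hom from $\Omega$ of the module $M(k-t_{i+1})$ to itself. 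For a local symmetric Nakayama algebra $B$, every nonprojective indecomposable $Z$ satisfies $\StHom_B(\Omega Z,Z)\ne0$, or at least $\Ext^1_B(Z,Z)\ne0$ or $\Ext^2_B(Z,Z)\ne0$, since $\Omega^2Z\simeq Z$ and $\underline{\End}_B(Z)\neq0$. This contradicts self-orthogonality, so $Y\in\add(M)$ and $X$ is projective.

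The main obstacle is the identification of $\Ext^i_\Gamma$ between Gorenstein projectives with the stable Hom in the block $\stmc{R[x]/(f(x)^{t_i-t_{i+1}})}$, together with the Gorenstein property. We would handle this using the triangle equivalence $\Stgp{\Gamma}\simeq A\modcat/[M]$ from the proof of Proposition \ref{scoeog}, where $\Ext^i_\Gamma(G,G')=\Hom(G,G'[i])$ in $\Stgp{\Gamma}$ for $i\ge1$.
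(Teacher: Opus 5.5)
Your proof is correct, but it takes a different route from the paper.

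The paper argues purely by citation. Centralizer matrix algebras are CM-finite Gorenstein (\cite{xz2}, \cite{kiwy1}), and (ARC/GNC) is known for all CM-finite Gorenstein algebras by \cite[Corollary 3.6]{z2}.

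You keep the Gorenstein input, ultimately from \cite{kiwy1} exactly as the paper does, but you replace the appeal to \cite{z2} with an explicit computation:
\begin{itemize}
\item A self-orthogonal generator lies in ${}^{\perp}\Gamma=\gp{\Gamma}$.
\item Under the triangle equivalence $\Stgp{\Gamma}\simeq\prod_{i\in\mathcal{D}_T}\stmc{R[x]/(f(x)^i)}$ of Proposition \ref{scoeog}, each indecomposable non-projective Gorenstein projective $G$ corresponds to a non-projective indecomposable $Z$ over a local symmetric Nakayama algebra.
\item Over that algebra $\Omega^2Z\simeq Z$, so $\Ext^2_\Gamma(G,G)\simeq\StHom(Z,\Omega^{-2}Z)\simeq\StEnd(Z)\neq 0$.
\end{itemize}
This is the concrete instance of the general CM-finite argument: the syzygy functor permutes the finitely many indecomposables of the stable category, so $\Omega^kG\simeq G$ for some $k\ge1$.

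What your route buys is a proof that is self-contained relative to the paper's own Proposition \ref{scoeog}. It also gives sharper information: the obstruction already appears in degree $2$, and in fact in degree $1$. For $Z=M(j)$ over $R[x]/(g(x)^m)$ one has $\Omega Z=M(m-j)$ and $\StHom(M(m-j),M(j))\neq0$, so your hedge ``or at least'' is unnecessary. What the paper's route buys is brevity.

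Three small corrections.

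First, your sketched direct proof of $\id({}_\Gamma\Gamma)\le2$ is garbled. $\Hom_A(M,N')$ with $N'\in\add(M)$ is projective; it is injective only for $N'\in\add(A)$. If you want to avoid \cite{kiwy1}, you can argue as follows.
\begin{itemize}
\item Applying $\Hom_A(M,-)$ to an $\mathcal{S}_{\add(M)}$-resolution of $X$ gives a projective resolution of $\Hom_A(M,X)$.
\item $\Hom_\Gamma(-,\Gamma)$ is exact on that resolution, because the sequences are $\add(M)$-monic. Hence $\Ext^{\ge1}_\Gamma(\Hom_A(M,X),\Gamma)=0$.
\item Since $\Hom_A(M,-)$ is full and left exact, every second syzygy of a $\Gamma$-module has the form $\Hom_A(M,K)$. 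This gives $\id({}_\Gamma\Gamma)\le 2$.
\item The right-hand side follows from $\Gamma\simeq\Gamma^{\rm op}$, by Lemmas \ref{eogoqpicma} and \ref{cmat}.
\end{itemize}

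Second, the stable Hom you write should be $\StHom(\Omega Z,Z)$ with $Z=M(k-t_{i+1})$ in $\stmc{R[x]/(f(x)^{t_i-t_{i+1}})}$, not something involving $\Omega M(k)$.

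Third, the detour through dominant dimension plays no role in the argument and can be dropped.
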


{\it Proof.} Note that centralizer matrix algebras are a CM-finite Gorenstein algebras (see \cite[Theorem 1.1(2)]{xz2} and \cite[Theorem 1.1]{kiwy1}). It was shown in \cite[Corollary 3.6]{z2} that (ARC/GNC) holds true for CM-finite Gorenstein algebras. Thus (ARC/GNC) holds true for centralizer matrix algebras. $\square$

\begin{Rem}{\rm It was proved that the finitistic dimension conjecture (FDC) \cite{bass} is true for centralizer matrix algebras over fields (see \cite{lx1, xz2}).
This implies that the strong Nakayama \cite{cf1}, generalized Nakayama \cite{ar}, Nakayama \cite{naka}, Wakamatsu tilting \cite[Section IV.3, p.71]{br} and
tilting (or projective) complement conjectures \cite{hu} hold true for centralizer matrix algebras over fields. Also, it is known that if (FDC) holds for all algebras,
then (ARC/GNC) holds for all algebras. Clearly, Gorenstein symmetry conjecture holds for centralizer matrix algebras. Moreover, Tachikawa's first and second conjectures
(TC1) and (TC2) (see \cite[p.115-116]{tachikawa}) hold true for centralizer matrix algebras. In fact, we consider the generator and cogenerator $M:=A\oplus D(A)$
for a centralizer matrix algebra $A$. Then it follows from the condition of (TC1) and Proposition \ref{ar-for-cma} that $M$ is projective, and therefore $D(A)$ is projective
and $A$ is self-injective. This implies that (TC1) holds true. The validity of (TC2) follows from the following general fact: if (ARC) holds for a class of algebras,
then (TC2) holds for all self-injective algebras in this class.
Altogether, all homological conjectures in \cite[Conjectures, p. 409]{ars} hold true for centralizer matrix algebras over fields.
}\end{Rem}

\medskip
Next, we study the quasi-heredity of centralizer matrix algebras. First, we recall the definition of quasi-hereditary algebras introduced in \cite{cps}.

\begin{Def}{\rm \cite{cps}}\label{qha}
Let $A$ be a finite-dimensional algebra over a field $R$. An ideal $J$ in $A$ is called a \emph{heredity ideal} if $J$ is idempotent, $J \rad(A) J = 0$ and $J$ is a projective left (or right) $A$-module. The algebra $A$ is said to be \emph{quasi-hereditary} provided there is a finite chain $0=J_0\subseteq J_1\subseteq J_2\subseteq\cdots\subseteq J_n = A$ of ideals in $A$ such that $J_j/J_{j-1}$ is a heredity ideal in $A/J_{j-1}$ for all $j$.
\end{Def}

Quasi-hereditary algebras were motivated by describing the highest weight category of semisimple Lie algebras. For more information on quasi-hereditary algebras, we refer to a recent survey \cite{xicc} and the references therein. The following lemma generalizes a result in~\cite[Theorem 1.1(2)]{xz-LAA}.

\begin{Lem}\label{efqha} For $c\in M_n(R)$, let $S:=S_n(c,R)$ and $C$ be the Cartan matrix of $S$. Then

$(1)$ $\det(C)=\begin{cases} \prod_{f(x)\in \mathcal{I}_c}\prod_{i\in \mathcal{D}_{P_c(f(x))}}i, & \text{ if } \mathcal{I}_c\ne\emptyset,\\
1, & \text{ otherwise. } \end{cases}$ \\

$(2)$ The following are equivalent:

$\quad (i)$ $S$ is quasi-hereditary.

$\quad (ii)$ $\gd(S)<\infty$.

$\quad (iii)$ $\gd(S)\le 2$.

$\quad (iv)$ $\mathcal{I}_c=\emptyset$.

$\quad (v)$ $|P_c(f(x))|=\max\{j\in P_c(f(x))\}$ for all $f(x)\in \mathcal{M}_c$.

$\quad (vi)$ $\det(C)=1$.
\end{Lem}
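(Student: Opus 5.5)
The plan is to reduce everything to the block decomposition $S\simeq\prod_{i=1}^{l_c}A_i$ with $A_i=\End_{U_i}(M_i)$ from Section \ref{sec4}. For (1), recall that the Cartan matrix of a product of algebras is block diagonal, so $\det(C)=\prod_i\det(C_{A_i})$. The Cartan determinant is a Morita invariant: $\End_{U_i}(M_i)$ and $\End_{U_i}(\mathcal{B}(M_i))$ have the same basic algebra and hence the same Cartan matrix up to simultaneous permutation of rows and columns. So we may replace $M_i$ by $\mathcal{B}(M_i)=\bigoplus_{r\in P_c(f_i(x))}R[x]/(f_i(x)^r)$.

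Next I apply Lemma \ref{cmoe} with $T=P_c(f_i(x))=\{t_1>\cdots>t_s\}$. This gives $\det(C_{A_i})=t_s\prod_{k=1}^{s-1}(t_k-t_{k+1})$, which is exactly the product of all entries of the multiset $\mathcal{H}_T$. Removing the $1$'s does not change the product, so $\det(C_{A_i})=\prod_{j\in\mathcal{D}_{P_c(f_i(x))}}j$, with the empty product equal to $1$. By fact (ii) in Remark \ref{rber}, $\mathcal{D}_{P_c(f_i(x))}=\emptyset$ precisely when $f_i(x)\notin\mathcal{I}_c$. Taking the product over $i$ then yields the formula in (1), including the value $1$ when $\mathcal{I}_c=\emptyset$.

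For (2), I would proceed as follows.

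(iv)$\Leftrightarrow$(v): this is the definition of $\mathcal{I}_c$.

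(iv)$\Leftrightarrow$(vi): this follows from (1). Every element of $\mathcal{D}_T$ is an integer $\ge2$, so the product is $1$ if and only if all the multisets $\mathcal{D}_{P_c(f(x))}$, $f(x)\in\mathcal{I}_c$, are empty. For $f(x)\in\mathcal{I}_c$ the multiset $\mathcal{D}_{P_c(f(x))}$ is nonempty by fact (ii) of Remark \ref{rber}. Hence $\det(C)=1$ if and only if $\mathcal{I}_c=\emptyset$.

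(ii)$\Leftrightarrow$(iii)$\Leftrightarrow$(iv): by Corollary \ref{gdoeog} applied to each block (after the Morita reduction above, which preserves global dimension), each $\gd(A_i)\in\{0,2,\infty\}$. Moreover $\gd(A_i)<\infty$ exactly when $\mathcal{B}(M_i)$ is an additive generator of $U_i\modcat$, i.e. $P_c(f_i(x))=[n_i]$, which is (v) for that block. Equivalently, Theorem \ref{scocma} together with the fact that $\gd<\infty$ if and only if $\Ds{S}=0$ gives (ii)$\Leftrightarrow$(iv), since each $\stmc{R[x]/(f(x)^j)}$ with $j\ge2$ is nonzero.

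(iii)$\Rightarrow$(i): when every block is the Auslander algebra of $U_i$ or semisimple, I would invoke that Auslander algebras of representation-finite algebras (in particular of $R[x]/(f(x)^{n})$) are quasi-hereditary, and that finite products of quasi-hereditary algebras are quasi-hereditary.

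(i)$\Rightarrow$(ii): quasi-hereditary algebras have finite global dimension, by \cite{cps}.

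The main point needing care is the (iii)$\Rightarrow$(i) step over an arbitrary field. If a direct citation is not acceptable, I would construct a heredity chain for $\End_A(\bigoplus_{i=1}^n M(i))$ by hand. Take the idempotent $e$ corresponding to $M(n)$, so that $J_1=\Gamma e\Gamma$. Then check that $J_1$ is projective and that $J_1\rad J_1=0$; this uses that $M(n)=A$ is projective, so $\Gamma e\Gamma$ consists of the maps factoring through $A$. Finally, observe that $\Gamma/J_1\simeq\StEnd_A(\bigoplus_{i<n}M(i))$, which is again of the same form, namely the Auslander algebra of $R[x]/(f(x)^{n-1})$, and proceed by induction on $n$.
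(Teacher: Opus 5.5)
Your main line matches the paper's proof. You use the same reduction to the blocks $\End_{U_i}(\mathcal{B}(M_i))$, Lemma \ref{cmoe} for the determinant formula and for (v)$\Leftrightarrow$(vi), Corollary \ref{gdoeog} for (ii)$\Leftrightarrow$(iii)$\Leftrightarrow$(iv)$\Leftrightarrow$(v), and Dlab--Ringel for (i)$\Leftrightarrow$(iii). The paper cites the more general fact that every algebra of global dimension at most $2$ is quasi-hereditary, which covers your Auslander-algebra citation, so with that citation your proof is complete.

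The by-hand heredity chain you offer as a fallback would fail as written. Put $M=\bigoplus_{i=1}^n M(i)$ and $\Gamma=\End_A(M)$, and let $e$ be the idempotent belonging to $M(n)=A$.
\begin{itemize}
\item Then $e\Gamma e\simeq A$, which is not semisimple for $n\ge 2$. Multiplication by $f(x)$ on $M(n)$ is a nonzero element of $e\,\rad(\Gamma)\,e\subseteq J_1\rad(\Gamma)J_1$, so $J_1=\Gamma e\Gamma$ is not a heredity ideal.
\item Also $\Gamma/J_1\simeq\StEnd_A(\bigoplus_{i<n}M(i))$ is not the Auslander algebra of $R[x]/(f(x)^{n-1})$. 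Already for $n=3$ it has $R$-dimension $4u$ rather than $5u$.
\end{itemize}

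The chain must start at the other end, with the idempotent $e_1$ of the simple summand $M(1)$.
\begin{itemize}
\item $e_1\Gamma e_1\simeq R[x]/(f(x))$ is a field.
\item A map into $M(j)$ that factors through $\add(M(1))$ factors uniquely through the socle inclusion $M(1)\hookrightarrow M(j)$. Hence $\Gamma e_1\Gamma\simeq\Hom_A(M,M(1))^{\oplus n}$ is projective.
\item Restricting homomorphisms to radicals, with $\rad M(i)\simeq R[x]/(f(x)^{i-1})$, induces $\Gamma/\Gamma e_1\Gamma\simeq\End_{A'}(\bigoplus_{i=1}^{n-1}R[x]/(f(x)^i))$ for $A'=R[x]/(f(x)^{n-1})$. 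The kernel of the restriction consists exactly of the maps factoring through $M(i)/\rad M(i)\simeq M(1)$. Surjectivity follows by comparing dimensions, using $\dim_R\Hom_A(M(i),M(j))=u\min\{i,j\}$.
\end{itemize}
Induction on $n$ then produces the heredity chain.
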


{\it Proof.} Following the notation in Section \ref{sec4}, we have $S\simeq \prod^{l_c}_{i=1}{\End}_{U_i}(M_i)$ as $R$-algebras, where $U_i$ denotes the algebra $R[x]/(f_i(x)^{n_i})$, $\mathcal{B}(M_i)\simeq  \bigoplus_{r\in {P_c(f_i(x))}} R[x]/(f_i(x)^r)$ is the basic $U_i$-module of $M_i$, and $l_c$ is the number of maximal elementary divisors of $c$. Thus $\det(C_{\End_{U_i}(M_i)})=\det(C_{\End_{U_i}(\mathcal{B}(M_i))})$ and $\gd(\End_{U_i}(M_i))=\gd(\End_{U_i}(\mathcal{B}(M_i)))$.

(1) By Lemma \ref{cmoe}, $\det(C_{\End_{U_i}(\mathcal{B}(M_i))})=\prod_{j\in \mathcal{H}_{P_c(f_i(x)^{n_i})}}j$ for $i\in [l_c]$. It follows from the definitions of $\mathcal{D}_{P_c(f_i(x)^{n_i})}$ and $\mathcal{I}_c$ that $$\det(C_{\End_{U_i}(M_i)})=\det(C_{\End_{U_i}(\mathcal{B}(M_i))})=\begin{cases} \prod_{j\in \mathcal{D}_{P_c(f_i(x))}}j, & \text{ if } f_i(x)\in\mathcal{I}_c,\\
1, & \text{ otherwise. } \end{cases}$$ Thus (1) follows.

(2) By Corollary \ref{gdoeog} and the definition of $\mathcal{I}_c$, $\gd(S)<\infty$ if and only if $\gd(\End_{U_i}(\mathcal{B}(M_i)))<\infty$ for all $i\in [l_c]$ if and only if $\gd(\End_{U_i}(\mathcal{B}(M_i)))\le 2$ for all $i\in[l_c]$ if and only if $M_i$ is an additive generator for $U_i$-mod for all $i\in [l_c]$ if and only if $|P_c(f(x))|=\max\{j\in P_c(f(x))\}$ for all $f(x)\in \mathcal{M}_c$ if and only if $\mathcal{I}_c=\emptyset$. Thus (ii) $\Leftrightarrow$ (v) $\Leftrightarrow$ (iv) $\Leftrightarrow $ (iii). Lemma \ref{cmoe} shows that $|P_c(f(x))|=\max\{j\in P_c(f(x))\}$ for all $f(x)\in \mathcal{M}_c$ if and only if $M_i$ is an additive generator for $U_i$-mod for all $i\in[l_c]$ if and only if $\det(C_{\End_{U_i}(\mathcal{B}(M_i))})=1$ for all $i\in[l_c]$. Thus (v) $\Leftrightarrow$ (vi). It follows from \cite[Statement 9]{dr} that (i) $\Ra$ (ii), and from \cite[Theorem 2]{dr} that (iii) $\Ra$ (i).  $\square$

\medskip
We have the following theorem.

\begin{Theo}\label{cdaqha}
Let $R$ be a field. Then the following hold.

$(1)$ The Cartan determinant conjecture holds for centralizer matrix algebras.

$(2)$ Let $c\in M_n(R)$ and $d\in M_m(R)$. If $S_n(c,R)$ and $S_m(d,R)$ are singularly equivalent, then

$\quad (i)$ $S_n(c,R)$ is quasi-hereditary if and only if $S_m(d,R)$ is quasi-hereditary.

$\quad (ii)$ The Cartan determinants of $S_n(c,R)$ and $S_m(d,R)$ are equal.
\end{Theo}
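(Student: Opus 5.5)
The plan is to derive everything from Lemma \ref{efqha} together with Theorem \ref{scocma}/Theorem \ref{seocma}.

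For (1), let $S:=S_n(c,R)$ and suppose $\gd(S)<\infty$. By Lemma \ref{efqha}(2), the implication (ii) $\Ra$ (vi) gives $\det(C_S)=1$, which is exactly (CDC). There is no further work here.

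For (2), we assume $S_n(c,R)$ and $S_m(d,R)$ are singularly equivalent. By Theorem \ref{seocma} this is the same as $c\stackrel{Sg}\sim d$. Then $r_c=r_d$, and there is $\sigma\in\Sigma_{r_c}$ with $Q_{c,i}\simeq Q_{d,(i)\sigma}$ and $\mathcal{D}_{c,i}=\mathcal{D}_{d,(i)\sigma}$ for all $i$.

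For (i): since $r_c=r_d$, we have $\mathcal{I}_c=\emptyset$ if and only if $\mathcal{I}_d=\emptyset$. By Lemma \ref{efqha}(2), (i) $\Leftrightarrow$ (iv), so $S_n(c,R)$ is quasi-hereditary if and only if $S_m(d,R)$ is. Alternatively, one can use that $\gd<\infty$ is equivalent to the singularity category vanishing, which is invariant under singular equivalence, and then apply (i) $\Leftrightarrow$ (ii) of Lemma \ref{efqha}.

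For (ii): if $\mathcal{I}_c=\emptyset$, then also $\mathcal{I}_d=\emptyset$, and both determinants are $1$ by Lemma \ref{efqha}(1). Otherwise, Lemma \ref{efqha}(1) gives
$$\det(C_{S_n(c,R)})=\prod_{f(x)\in\mathcal{I}_c}\prod_{j\in\mathcal{D}_{P_c(f(x))}}j=\prod_{j\in\mathcal{U}_c}j,$$
where the product runs over the multiset $\mathcal{U}_c=\bigcup_{f(x)\in\mathcal{I}_c}\mathcal{D}_{P_c(f(x))}$. This multiset is the disjoint multiset union of the $\mathcal{D}_{c,i}$ over $i\in[r_c]$, since the classes $\mathcal{I}_{c,i}$ partition $\mathcal{I}_c$. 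The $Sg$-equivalence gives $\mathcal{D}_{c,i}=\mathcal{D}_{d,(i)\sigma}$, so $\mathcal{U}_c=\mathcal{U}_d$, as the paper already noted after Definition \ref{newequrel}. Hence the two Cartan determinants coincide.

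There is essentially no obstacle. The only point needing care is bookkeeping: the products must be taken over multisets, so that repeated entries in $\mathcal{D}_{c,i}$ are counted with multiplicity. Otherwise the identification of $\det(C)$ with the product over $\mathcal{U}_c$ would fail.
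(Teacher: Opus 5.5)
Your proposal is correct and matches the paper's proof. Part (1) is read off from Lemma \ref{efqha}(2). For part (2), you use Theorem \ref{seocma} to get $c\stackrel{Sg}\sim d$, deduce that $\mathcal{I}_c=\emptyset\Leftrightarrow\mathcal{I}_d=\emptyset$ and $\mathcal{U}_c=\mathcal{U}_d$, and conclude with Lemma \ref{efqha}(2) and (1), exactly as the paper does.
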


{\it Proof.} (1) follows from Lemma \ref{efqha}(2).

We prove (2). Suppose $S_n(c,R)$ and $S_m(d,R)$ are singularly equivalent. It follows from Theorem \ref{seocma} that $c$ and $d$ are $Sg$-equivalent. Hence $\mathcal{U}_c=\mathcal{U}_d$. Moreover, $\mathcal{I}_c=\emptyset$ if and only if $\mathcal{I}_d=\emptyset$. By Lemma \ref{efqha}(2), $S_n(c,R)$ is quasi-hereditary if and only if so is $S_m(d,R)$. This shows (2)(i). It remains to prove (2)(ii).

Suppose $\mathcal{I}_c=\emptyset$. Then $\mathcal{I}_d=\emptyset$ and it follows from Lemma \ref{efqha}(1) that $\det(C_{S_n(c,R)})=1=\det(C_{S_m(d,R)}).$

Suppose $\mathcal{I}_c\ne\emptyset$. Then $\mathcal{I}_d\ne\emptyset$. It follows from Lemma \ref{efqha}(1) and $\mathcal{U}_c=\mathcal{U}_d$ that
$$\det(C_{S_n(c,R)})
=\prod_{f(x)\in \mathcal{I}_c}\prod_{i\in \mathcal{D}_{P_c(f(x))}}i
=\prod_{i\in \mathcal{U}_{c}}i
=\prod_{i\in \mathcal{U}_{d}}i
=\prod_{g(x)\in \mathcal{I}_d}\prod_{i\in \mathcal{D}_{P_d(g(x))}}i
=\det(C_{S_m(d,R)}). \quad \square$$

Finally, we propose the following question.
\begin{Ques} How to classify all (basic) tilting (respectively, silting) modules over $S_n(c,R)$ for $R$ a field?
\end{Ques}

\medskip
\textbf{Acknowledgements.} The research work of the corresponding author C. C. Xi was supported partially by the National Natural Science Foundation of China (Grants 12031014).
Both authors thank Prof. Dr. Martin Kalck for pointing out the references \cite{kiwy1, kk1} to their attention when he read the first version of the preprint arXiv:2603.20643v1.

%
%
%
%
%
%
\end{document}